\def\eqref#1{equation~(\ref{#1})}
\def\ceil#1{\left\lceil #1 \right\rceil}
\def\floor#1{\left\lfloor #1 \right\rfloor}
\def\1{\bf{1}}
\newcommand{\Norm}[1]{\left\| #1 \right\|}
\newcommand{\norm}[1]{\left\| #1 \right\|_2}
\def\inner#1#2{\langle #1, #2 \rangle}
\def\eps{{\varepsilon}}
\def\vzero{{\bf{0}}}
\def\vone{{\bf{1}}}
\def\vtheta{{\bm{\theta}}}
\def\va{{\bf{a}}}
\def\vb{{\bf{b}}}
\def\vc{{\bf{c}}}
\def\ve{{\bf{e}}}
\def\vg{{\bf{g}}}
\def\vu{{\bf{u}}}
\def\vv{{\bf{v}}}
\def\vw{{\bf{w}}}
\def\vx{{\bf{x}}}
\def\vy{{\bf{y}}}
\def\vz{{\bf{z}}}
\def\fA{{\mathcal{A}}}
\def\fC{{\mathcal{C}}}
\def\fF{{\mathcal{F}}}
\def\fO{{\mathcal{O}}}
\def\fP{{\mathcal{P}}}
\def\fX{{\mathcal{X}}}
\def\fY{{\mathcal{Y}}}
\def\fZ{{\mathcal{Z}}}
\def\BE{{\mathbb{E}}}
\def\BR{{\mathbb{R}}}
\def\mB {{\bf B}}
\def\mI {{\bf I}}
\def\mU {{\bf U}}
\newcommand{\E}{\mathbb{E}}
\DeclareMathOperator*{\argmax}{arg\,max}
\DeclareMathOperator*{\argmin}{arg\,min}
\DeclareMathOperator{\spn}{span}
\theoremstyle{plain}
\newtheorem{thm}{Theorem}
\newtheorem{dfn}{Definition}
\newtheorem{lem}{Lemma}
\newtheorem{cor}{Corollary}
\def\Ddots{\mathinner{\mkern1mu\raise\p@
\vbox{\kern7\p@\hbox{.}}\mkern2mu
\raise4\p@\hbox{.}\mkern2mu\raise7\p@\hbox{.}\mkern1mu}}
\newcommand*{\rom}[1]{\expandafter\@slowromancap\romannumeral #1@}
\title{Near Optimal Stochastic Algorithms for Finite-Sum Unbalanced Convex-Concave Minimax Optimization}
\author{
    Luo Luo \thanks{Equal Contribution} \thanks{
        Department of Mathematics, The Hong Kong University of Science and Technology; luoluo@ust.hk} \qquad
    Guangzeng Xie $^*$\thanks{
        Academy for Advanced Interdisciplinary Studies, Peking University; smsxgz@pku.edu.cn} \qquad
    Tong Zhang \thanks{
        Department of Mathematics, The Hong Kong University of Science and Technology; tongzhang@ust.hk} \qquad
    Zhihua Zhang \thanks{School of Mathematical Sciences, Peking University; zhzhang@math.pku.edu.cn}
}
\date{}
\begin{document}

\maketitle

\begin{abstract}
This paper considers stochastic first-order algorithms for convex-concave minimax problems of the form $\min_{\bf x}\max_{\bf y}f(\bf x, \bf y)$, where $f$ can be presented by the average of $n$ individual components which are $L$-average smooth. 
For $\mu_x$-strongly-convex-$\mu_y$-strongly-concave setting, 
we propose a new stochastic algorithm which could find an $\varepsilon$-saddle point of the problem in $\tilde{\mathcal O} (\sqrt{n(\sqrt{n}+\kappa_x)(\sqrt{n}+\kappa_y)}\log(1/\varepsilon))$ stochastic first-order complexity, where $\kappa_x\triangleq L/\mu_x$ and $\kappa_y\triangleq L/\mu_y$.
This upper bound is near optimal with respect to $\varepsilon$, $n$, $\kappa_x$ and $\kappa_y$ simultaneously. 
In addition, the algorithm is easily implemented and works well in practice.
Our methods can be extended to solve more general unbalanced convex-concave minimax problems and the corresponding upper complexity bounds are also near optimal.
\end{abstract}

\section{Introduction}

This paper studies the following finite-sum minimax problem:
\begin{align}
    \min_{\vx\in\fX}\max_{\vy\in\fY} f(\vx,\vy)\triangleq \frac{1}{n}\sum_{i=1}^n f_i(\vx,\vy), \label{prob:main}
\end{align}
where $\fX$ and $\fY$ are convex and closed. Our goal is to find the approximate saddle point of problem (\ref{prob:main}) to guarantee the duality gap  no larger than $\eps$. 

The formulation (\ref{prob:main}) includes a lot of machine learning applications such as AUC maximization~\cite{hanley1982meaning,ying2016stochastic,liu2020stocastic,guo2020communication}, robust optimization~\cite{duchi2019variance,yan2019stochastic}, adversarial learning~\cite{sinha2017certifying} and reinforcement learning~\cite{wai2018multi}. 
We study the fundamental setting that each component $f_i(\vx,\vy)$ is convex in $x$ and concave in $y$; $\{f_i(\vx,\vy)\}_{i=1}^n$ are $L$-average smooth; and $f(\vx,\vy)$ is $\mu_x$-strongly-convex in $x$ and $\mu_y$-strongly-concave in $y$. In particular, we focus on the unbalanced case that the condition numbers $\kappa_x \triangleq L/\mu_x$ and $\kappa_y\triangleq L/\mu_x$ could be quite different. Without loss of generality, we suppose $\kappa_x \geq \kappa_y$.

Most existing algorithms~\cite{korpelevich1977extragradient,gidel2019variational,palaniappan2016stochastic,luo2019stochastic,chavdarova2019reducing,mokhtari2020unified,alacaoglu2021stochastic} for general strongly-convex-strongly-concave (SCSC) minimax optimization do not consider the difference between two condition numbers $\kappa_x$ and $\kappa_y$, which leads to their upper bound complexities depend on $\max\{\kappa_x,\kappa_y\}$. 
\citet{lin2020near} first proposed proximal point methods for the unbalanced problem with $\tilde\fO(\sqrt{\kappa_x\kappa_y}\log^3(1/\eps))$ gradient calls\footnote{We use notation $\tilde\fO(\cdot)$ to hide logarithmic factors of $n$, $\kappa_x$ and $\kappa_y$ in complexities.}, nearly matching the lower bound~\cite{zhang2019lower,ibrahim2020linear} of the deterministic algorithm.  
\citet{wang2020improved,xie2021dippa} improved~\citet{lin2018catalyst}'s results under refined smoothness assumption and bilinear setting. 
Unfortunately, these methods~\cite{lin2020near,wang2020improved} are based on full gradient oracle and ignore the finite-sum structure in the objective function. 

In practice, the number of components $n$ could be very large. It is natural to use stochastic first-order oracle (SFO) algorithms to reduce the cost of gradient based methods.
The SFO algorithms and their optimality are well-studied for minimization problems~\cite{frostig2015regularizing,johnson2013accelerating,defazio2014saga,allen2017katyusha,fang2018spider,lin2018catalyst,zhou2019lower,woodworth2016tight}, but the related theory for minimax optimization is still imperfect. 
In the balanced case of $\kappa_x=\kappa_y=\kappa$, \citet{palaniappan2016stochastic} first introduced SVRG/SAGA~\cite{johnson2013accelerating,defazio2014saga} to solve the general formulation (\ref{prob:main}) in big data regime and obtained $\tilde\fO((n+\kappa^2)\log(1/\eps))$ SFO upper bound. If $\kappa>\sqrt{n}$, the leading term can be reduced to $\fO(n+\kappa\sqrt{n})$ by introducing extragradient (EG) or proximal point iterations~\cite{luo2019stochastic,chavdarova2019reducing}. Recently, \citet{vladislav2021accelerated} proposed an accelerated stochastic method for unbalanced problem (\ref{prob:main}) when both $\kappa_x$ and $\kappa_y$ are larger than $\sqrt{n}$. They designed a complicated algorithm which contains three-loops iterations,  obtaining $\fO(\sqrt{n\kappa_x\kappa_y})$ SFO upper bound under different smoothness assumptions\footnote{\citet{vladislav2021accelerated} assume each $f_i$ is $L_i$-smooth and $f$ is $L$-smooth, where $L=\frac{1}{n}\sum_{i=1}^nL_i$, while this paper only requires the weaker assumption that $\{f\}_{i=1}^n$ is $L$-average smooth as Definition~\ref{dfn:smooth}. Please see the detailed discussion for these two different settings in Section \ref{sec:acc-scsc}.}. 

In fact, SCSC minimax problems with finite-sum structure can be classified by three types of relationships among $\kappa_x$, $\kappa_y$ and $n$ (recall we have assumed $\kappa_x\geq\kappa_y$)~:
\begin{enumerate}
\item[(a)] $f(\vx, \vy)$ is extremely ill-conditioned w.r.t two variables: $\kappa_x=\Omega(\sqrt{n})$ and $\kappa_y=\Omega(\sqrt{n})$;
\item[(b)] $f(\vx, \vy)$ is only extremely ill-conditioned w.r.t $\vy$: $\kappa_x=\fO(\sqrt{n})$ and $\kappa_y=\Omega(\sqrt{n})$;
\item[(c)] the number of components is extremely large: $\kappa_x=\fO(\sqrt{n})$ and $\kappa_y=\fO(\sqrt{n})$.
\end{enumerate}
\citet{han2021lower} provided a general SFO lower bound for all three cases as follows
\begin{align}\label{complexity:lower-bound}
\begin{cases}
    \displaystyle\Omega\big(\big(n+\sqrt{n\kappa_x\kappa_y}\big)\log(1/\eps)\big), & \text{for~} \kappa_x=\Omega(\sqrt{n})  \text{~and~} \kappa_y=\Omega(\sqrt{n}); \\[0.1cm]
    \displaystyle\Omega\big(\big(n+n^{3/4}\sqrt{\kappa_y}\big)\log(1/\eps)\big), & \text{for~} \kappa_x=\fO(\sqrt{n})  \text{~and~} \kappa_y=\Omega(\sqrt{n}); \\[0.1cm]
    \Omega(n), & \text{for~} \kappa_x=\fO(\sqrt{n})  \text{~and~} \kappa_y=\fO(\sqrt{n}).
\end{cases}
\end{align}
We can observe that the complexity of SVRG/SAGA~\cite{palaniappan2016stochastic} is near optimal in the case of (c). \citet{vladislav2021accelerated}'s algorithm has the upper bound of the form $\fO\left((n+\sqrt{n\kappa_x\kappa_y})\log(1/\eps))\right)$, but it requires the stronger assumption that each component $f_i$ is $L$-smooth. We should point out that Case (b) is also an important setting, which is useful to establish the efficient algorithms for ill-conditioned strongly-convex-nonconcave (or nonconvex-strongly-concave) minimax optimization~\cite{lin2020near,lin2019gradient,luo2020stochastic}.

\begin{table*}[t]
\centering
\renewcommand{\arraystretch}{1.6}
{\small\begin{tabular}{|c|c|c|c|c|}
\hline
Algorithm & SFO Complexity & \!\!Loop\!\! & Reference  \\ \hline
Extragradient & $\fO\big(n\kappa_{\max}\log\left(\frac{1}{\eps}\right)\big)$ & 1 & \citet{gidel2019variational,korpelevich1977extragradient}   \\\hline
\!\!SVRG/SAGA/SVRE\!\! & $\fO\left(\left(n+\kappa_{\max}^2\right)\log\left(\frac{1}{\eps}\right)\right)$ & 1 & \!\!\!\citet{palaniappan2016stochastic,chavdarova2019reducing}\!\!\!  \\ \hline
A-SVRG/A-SAGA & $\tilde\fO\left(\left(n+\sqrt{n}\kappa_{\max}\right)\log\left(\frac{1}{\eps}\right)\right)$ & 2 & \citet{palaniappan2016stochastic}  \\ \hline
L-SVRE & $\fO\left(\left(n+\sqrt{n}\kappa_{\max}\right)\log\left(\frac{1}{\eps}\right)\right)$ & 1 & \citet{alacaoglu2021stochastic} + Theorem \ref{thm:svre} \\ \hline
MINIMAX-APPA & $\tilde\fO\left(n\sqrt{\kappa_x\kappa_y}\log^3\left(\frac{1}{\eps}\right)\right)$ & 3 & \citet{lin2020near}   \\\hline
PBR & $\tilde\fO\left(n\sqrt{\kappa_x\kappa_y}\log\left(\frac{1}{\eps}\right)\right)$ & 3 & \citet{wang2020improved}   \\\hline
 AL-SVRE & \!\!$\tilde\fO\big(\sqrt{n(\sqrt{n}+\kappa_x)(\sqrt{n}+\kappa_y)}\log\big(\frac{1}{\eps}\big)\big)$\!\! &  2 &  Corollary \ref{cor:csc}   \\ \hline
 Lower Bound & the expression of (\ref{complexity:lower-bound}) & -- &  \citet{han2021lower}   \\ \hline
\end{tabular}}
\caption{Comparison of SFO complexities in the $(\mu_x,\mu_y)$-convex-concave setting.}\label{table:SCSC}
\end{table*}

\begin{table*}[t]
\centering
\renewcommand{\arraystretch}{1.85}
{\small\begin{tabular}{|c|c|c|c|c|}
\hline
Algorithm & SFO Complexity & Loop & Reference  \\ \hline
DIAG & $\tilde\fO\left(n\kappa_y\sqrt{\frac{ L}{\eps}}\log^2\left(\frac{1}{\eps}\right)\right)$ & 3 & \citet{thekumparampil2019efficient}   \\\hline
MINIMAX-APPA & $\tilde\fO\left(n\sqrt{\frac{\kappa_y L}{\eps}}\log^3\left(\frac{1}{\eps}\right)\right)$ & 3 & \citet{lin2020near}   \\\hline
A-SVRG/A-SAGA & $\tilde\fO\Big(\Big(n+(\kappa_y^{3/2}+n^{3/4})\sqrt{\frac{L}{\eps}}\Big)\log(\frac{1}{\eps})\Big)$ & 2 & \citet{yang2020catalyst}   \\\hline
 AL-SVRE &  $\tilde\fO\Big(\Big(n + \sqrt{\frac{nL\kappa_y}{\eps}} + n^{3/4} \sqrt{\kappa_y} + n^{3/4}\sqrt{\frac{L}{\eps}}\Big)\log(\frac{1}{\eps}\big)\Big)$ &  2 &  Corollary \ref{cor:cc}   \\ \hline
 Lower Bound & $\Omega\Big(n + \sqrt{\frac{n L \kappa_y}{\eps}} + n^{3/4} \sqrt{\kappa_y} + n^{3/4}\sqrt{\frac{L}{\eps}}\Big)$ & -- &  \citet{han2021lower}   \\ \hline
\end{tabular}}
\caption{Comparison of SFO complexities in the $(0,\mu_y)$-convex-concave setting, where the result of A-SVRG/A-SAGA~\cite{yang2020catalyst} requires the assumption of $\eps<\mu_y$.}\label{table:SCS}
\end{table*}

In this paper, we propose a Catalyst-type algorithm that we call accelerated loopless stochastic variance reduced extragradient (AL-SVRE), whose iteration inexactly solves more well-conditioned minimax problems.
We use loopless stochastic variance reduced extragradient (L-SVRE)~\cite{alacaoglu2021stochastic} as the sub-problem solver and revise \citet{alacaoglu2021stochastic}'s analysis to show L-SVRE solves balanced SCSC minimax problem in optimal SFO complexity. 
Combining an appropriate choice of parameters, AL-SVRE could find an approximate saddle point of our main problem (\ref{prob:main}) with SFO complexity nearly matching \citet{han2021lower}'s lower bound (\ref{complexity:lower-bound}).
Additionally, AL-SVRE only applies one times Catalyst acceleration on AL-SVRE, which leads to the algorithm has two-loops of iterations in total and it is easily implemented. The empirical studies on AUC maximization~\cite{hanley1982meaning,ying2016stochastic,shen2018towards} and  wireless communication~\cite{garnaev2009eavesdropping,boyd2004convex,yang2020catalyst} problems  show that AL-SVRE performs better than baselines.
In contrast, previous Catalyst-type methods~\cite{lin2020near,wang2020improved,vladislav2021accelerated} for such an unbalanced problem need twice Catalyst acceleration (three-loops of iterations), making these algorithms almost impractical. 

We can also apply AL-SVRE to solve more general finite-sum convex-concave minimax problems. If we suppose $f(\vx,\vy)$ is $\mu_y$-strongly-concave in $\vy$ and allow it could be non-strongly-convex in $\vx$, AL-SVRE could find an $\eps$-saddle point of (\ref{prob:main}) in 
\begin{align}\label{complexity:upper-bound-NSS}
\tilde\fO\left(\left(n+D_x \sqrt{\frac{n L \kappa_y}{\eps}}+n^{3/4}\sqrt{\kappa_y}+n^{3/4}D_x \sqrt{\frac{L}{\eps}}\right)\log\left(\frac{1}{\eps}\right)\right)
\end{align}
SFO complexity, where $D_x$ is the diameter of $\fX$. The upper bound (\ref{complexity:upper-bound-NSS}) nearly matches the lower bounds with respect to $\eps$, $L$, $\mu_y$ and $n$ simultaneously, and we do not require any additional assumption on $L$, $\mu_y$, $n$ and $\eps$, while the state-of-the-art method~\cite{yang2020catalyst} implicitly requires that $\eps<\mu_y$.

More general, if we only suppose the objective function is convex in $\vx$ and concave in $\vy$, the algorithm could find an $\eps$-saddle point in
\begin{align}\label{complexity:upper-bound-NSNS}
\tilde\fO\left(\left(n+\frac{\sqrt{n} L D_x D_y}{\eps}+n^{3/4} (D_x+D_y) \sqrt{\frac{L}{\eps}}\right)\log\left(\frac{1}{\eps}\right)\right)
\end{align}
SFO complexity, where $D_x$ and $D_y$ are diameters of $\fX$ and $\fY$ respectively. Note that the upper bound (\ref{complexity:upper-bound-NSNS}) nearly matches the lower bounds~\cite{han2021lower} in such a setting. 
Compared with the the best known stochastic SFO algorithm L-SVRE~\cite{alacaoglu2021stochastic}, our result additionally trades off the difference between $D_x$ and $D_y$.

We present comparisons between our results and existing results in Table~\ref{table:SCSC} for strongly-convex-strongly-concave setting, in Table~\ref{table:SCS} for strongly-convex-concave setting, and in Table~\ref{table:CC} for convex-concave settings.
To the best of our knowledge, the proposed AL-SVRE is the first algorithm which attains near optimal SFO complexities for all the above settings.

\section{Notation and Preliminaries}

In this section, we present the notation and some definitions used in this paper. 

\begin{table*}[t]
\centering
\renewcommand{\arraystretch}{1.8}
{\small\begin{tabular}{|c|c|c|c|c|}
\hline
Algorithm & SFO Complexity & Loop & Reference  \\ \hline
Extragradient & $\fO\big(\max\{D_x^2,D_y^2\}\frac{nL}{\eps}\big)$ & 1 & \citet{gidel2019variational,korpelevich1977extragradient}   \\\hline
MINIMAX-APPA & $\tilde\fO\big(\frac{nLD_xD_y}{\eps}\log^3\left(\frac{1}{\eps}\right)\big)$ & 3 & \citet{lin2020near}   \\\hline
L-SVRE & $\fO\big(\max\{D_x^2,D_y^2\}(n+\frac{\sqrt{n}L}{\eps})\big)$ & 1 & \citet{alacaoglu2021stochastic}   \\\hline
AL-SVRE &  $\tilde\fO\Big(\Big(n + \frac{\sqrt{n} L D_x D_y}{\eps} + n^{3/4} (D_x + D_y) \sqrt{\frac{L}{\eps}}\Big)\log\Big(\frac{1}{\eps}\Big)\Big)$ &  2 &  Corollary \ref{cor:ASVRE}   \\ \hline
 Lower Bound & $\Omega \Big(n + \frac{\sqrt{n} L D_x D_y}{\eps} + n^{3/4} (D_x + D_y) \sqrt{\frac{L}{\eps}}\Big)$ & -- &  \citet{han2021lower}   \\ \hline
\end{tabular}}
\caption{Comparison of SFO complexities in convex-concave setting.}\label{table:CC}
\end{table*}

\begin{dfn}\label{dfn:smooth}
For any differentiable function $\psi\colon \fZ \to \BR$, we say $\psi$ is $L$-smooth for some $L>0$ if for any $\vz, \vz'\in\fZ$, it holds that
$\|\nabla \psi(\vz) - \nabla \psi(\vz')\|_2 \leq L \norm{\vz - \vz'}$.
\end{dfn}

\begin{dfn}
Suppose there are $n$ differentiable functions $\{\psi_i\colon \fZ \to \BR\}_{i=1}^n$. We say $\{\psi_i\}_{i=1}^n$ is $L$-average smooth for some $L>0$ if for any $\vz, \vz' \in \fZ$, it holds that
\begin{align*}
    \frac{1}{n} \sum_{i=1}^n \norm{\nabla \psi_i(\vz) - \nabla \psi_i(\vz')}^2 \le L^2\norm{\vz - \vz'}^2.
\end{align*}
\end{dfn}

\begin{dfn}
For a differentiable function $\psi\colon \fZ \rightarrow \BR$, we say $\psi$ is convex if for any $\vz, \vz' \in \fZ$, it holds that $\psi(\vz') \ge \psi(\vz) +  \inner{\nabla\psi(\vz)}{\vz' - \vz}$.
We say $\psi$ is $\mu$-strongly-convex for some $\mu>0$ if $\psi(\cdot)-\frac{\mu}{2}\norm{\cdot}^2$ is convex.
We also say $\psi$ is concave ($\mu$-strongly-concave) if $-\psi$ is convex ($\mu$-strongly-convex).  
\end{dfn}


\begin{dfn}\label{dfn:scsc}
For any function $f\colon \fX\times\fY\rightarrow\BR$ and $\mu_x, \mu_y \geq0$, we say $f$ is $(\mu_x,\mu_y)$-convex-concave if for any $\vx\in\fX$ and $\vy\in\fY$, it holds that $f(\vx,\cdot)$ is $\mu_y$-strongly-concave and $f(\cdot,\vy)$ is $\mu_x$-strongly-convex.
\end{dfn}

In Definition \ref{dfn:scsc}, we allow both $\mu_x$ and $\mu_y$ to be zero. The notation $(0,0)$-convex-concave means the function is general convex-concave, and  $(0, \mu_y)$-convex-concave means it is $\mu_y$-strongly-concave in $\vy$ but possibly non-strongly-convex in $\vx$. Similarly, we use $(\mu_x, 0)$-convex-concave to present the function is $\mu_x$-strongly-convex in $\vx$ but possibly non-strongly-concave in $\vy$.

We are interested in finding an approximate saddle point which is defined as follows.
\begin{dfn}
For the minimax optimization problem (\ref{prob:main}),  $(\hat \vx, \hat \vy)\in\fX\times\fY$ is said to be an $\eps$-saddle point if
\begin{align*}
\max_{\vy\in\fY} f(\hat \vx, \vy) - \min_{\vx\in\fX} f(\vx, \hat \vy) \leq \eps.
\end{align*}
\end{dfn}

We introduce the (stochastic) gradient operator for the ease of presentation.
\begin{dfn}
For problem (\ref{prob:main}), we define the gradient operator and stochastic gradient operator as 
\begin{align*}
\vg(\vz) = \begin{bmatrix}
\nabla_\vx f(\vz) \\ -\nabla_\vy f(\vz)
\end{bmatrix}
\quad \text{and} \quad
\vg_i(\vz) = \begin{bmatrix}
\nabla_\vx f_i(\vz) \\ -\nabla_\vy f_i(\vz)
\end{bmatrix}
\end{align*}
where $\vz=(\vx,\vy)\in\BR^{d_x}\times\BR^{d_y}$.
\end{dfn}

We also conduct the projection operator to address the constraints in the problem.
\begin{dfn}
We define the projection of $\vz$ onto the convex and compact set $\fC$ as
\begin{align*}
    \fP_\fC(\vz)=\argmin_{\vu\in\fC}\norm{\vu-\vz}^2.
\end{align*}
\end{dfn}

This paper focuses on solving finite-sum minimax problems by SFO algorithms and we give the formal definition as follows.

\begin{dfn}\label{dfn:SFO}
Consider a stochastic optimization algorithm $\fA$ to solve Problem (\ref{prob:main}). Suppose an initial point $(\vx^{(0)}, \vy^{(0)})$ is given, and let $(\vx^{(t)}, \vy^{(t)})$ denote the point obtained by $\fA$ at time-step $t$. The algorithm is said to be an SFO algorithm if for any $t > 0$, we have
\begin{align*}
\vx^{(t)} = \fP_\fX\big(\tilde\vx^{(t)}\big) \text{~~~and~~~}
    \vy^{(t)} = \fP_\fY\big(\tilde\vy^{(t)}\big),
\end{align*}
where
\begin{align*}
\tilde\vx^{(t)} \in & \spn\big\{\vx^{(0)}, \vx^{(1)} \dots, \vx^{(t-1)}, \nabla_{\vx} f_{i_{t}}\big(\vx^{(0)}, \vy^{(0)}\big),
\dots, \nabla_{\vx} f_{i_{t}}\big(\vx^{(t-1)}, \vy^{(t-1)}\big)\big\}, \\
\tilde\vy^{(t)} \in & \spn\big\{\vy^{(0)}, \vy^{(1)} \dots, \vy^{(t-1)}, \nabla_{\vy} f_{i_{t}}\big(\vx^{(0)}, \vy^{(0)}\big), 
\dots, \nabla_{\vy} f_{i_{t}}\big(\vx^{(t-1)}, \vy^{(t-1)}\big)\big\},
\end{align*}
and $i_t$ is drawn from $\{1,\dots,n\}$.
\end{dfn}

\section{Accelerating Unbalanced Convex-Concave Optimization}

In this section, we first revisit loopless stochastic variance reduced extragradient (L-SVRE)~\cite{alacaoglu2021stochastic}, and show it can be used to solve balanced strongly-convex-strongly-concave (SCSC) minimax with optimal SFO complexity. 
Then we propose a Catalyst-type scheme to accelerate L-SVRE, deriving our new algorithm \emph{Accelerated Loopless Stochastic Variance Reduced Extragradient} (AL-SVRE) whose SFO upper bound is near optimal for different types of unbalanced concave-concave minimax problems.

\begin{algorithm}[t]
\caption{$\text{L-SVRE}~\left(\{f_i(\vx,\vy)\}_{i=1}^n, (\vx_0, \vy_0), \tau, p, T\right)$} \label{algo:svre}
\begin{algorithmic}[1]
    \STATE \textbf{Initialize:} $\alpha = 1 - p$, $\vw_0 = \vz_0$. \\[0.15cm]
    \STATE \textbf{for} $k = 0, 1, \dots, T-1$ \textbf{do}\\[0.15cm]
    \STATE\quad $\bar{\vz}_k = \alpha \vz_k + (1 - \alpha) \vw_k$ \\[0.15cm]
    \STATE\quad $\vz_{k+1/2} = \fP_{\fZ}(\bar{\vz}_k - \tau \vg(\vw_k))$ \\[0.15cm]
    \STATE\quad Draw an index $i \in [n]$ uniformly at random. \\[0.15cm]
    \STATE\quad $\vz_{k+1} = \fP_{\fZ}(\bar{\vz}_k - \tau [\vg(\vw_k) + \vg_i(\vz_{k+1/2}) - \vg_i(\vw_k)])$ \\[0.15cm]
    \STATE\quad $\vw_{k+1} = \begin{cases} \vz_{k+1}, ~&\text{ with probability } p \\
        \vw_k, ~&\text{ with probability } 1 - p \end{cases}$ \\[0.15cm]
    \STATE\textbf{end for} \\[0.15cm]
    \STATE \textbf{Output:} $(\vx_T, \vy_T)$.
\end{algorithmic}
\end{algorithm}

\subsection{The Optimal SFO Algorithm for Balanced SCSC Minimax}

We start our discussion from the optimality of balanced SCSC problems such that the objective function $f(\vx,\vy)$ is $(\mu,\mu)$-convex-concave with some $\mu>0$.  
Theorem \ref{thm:balance-lower-bound} gives an SFO lower bound $\Omega((n+\sqrt{n}\kappa)\log(1/\eps))$ for finding an approximate saddle point with respect to the square of Euclidean distance to the optimal solution under average smooth assumptions. 
This result is slightly different from previous work which is based on duality gap convergence~\cite{han2021lower} or under assumption that each component $f_i$ is $L$-smooth~\cite{xie2020lower}.

\begin{thm}\label{thm:balance-lower-bound}
For any SFO algorithm $\fA$ and $L, \mu, n, \eps$ such that $L/\mu > 2$ and $\eps < 0.003$, there exist a dimension $d = \fO(n + \sqrt{n} L/\mu \log(1/\eps))$ and functions $\{f_i(\vx, \vy)\}_{i=1}^n\colon \BR^{d} \times \BR^d \to \BR$ which satisfy that $\{f_i\}_{i=1}^n$ is $L$-average smooth, $f$ is $(\mu,\mu)$-convex-concave. In order to find an approximate saddle point $(\hat\vx,\hat\vy)$ such that 
\begin{align*}
    \BE\left[\norm{\hat\vx-\vx^*}^2+\norm{\hat\vy-\vy^*}^2\right] \leq \eps,
\end{align*}
algorithm $\fA$ needs at least $\Omega(n {+} \sqrt{n}L/\mu \log(1/\eps))$ SFO calls.
\end{thm}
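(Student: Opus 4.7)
The plan is to construct an explicit hard instance that reduces the finite-sum minimax lower bound to a variant of Nesterov's ``zero chain'' obstruction, adapted simultaneously for (i) the bilinear coupling of minimax, (ii) the finite-sum splitting, and (iii) the $L$-average (rather than $L$-individual) smoothness regime. I would work in a product space of dimension $d = \Theta(n + \sqrt{n}\,\kappa\log(1/\eps))$ with $\kappa = L/\mu$, and consider a quadratic objective of the form
\begin{align*}
    f(\vx,\vy) \,=\, \frac{\mu}{2}\Norm{\vx}_2^2 - \frac{\mu}{2}\Norm{\vy}_2^2 + \inner{\vx}{\mA\vy} - \inner{\vb}{\vx},
\end{align*}
where $\mA$ is a block-tridiagonal ``chain'' operator and $\vb$ is a unit vector supported on the last block. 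This is the standard hard quadratic whose unconstrained saddle point requires a large number of iterations to approximate, because each application of $\mA$ only advances the known subspace by one block.

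The finite-sum structure is obtained by partitioning the coordinates into $n$ interleaved groups and writing $\mA = \frac{1}{n}\sum_{i=1}^n \mA_i$, with each $\mA_i$ chosen so that it acts nontrivially only on coordinates of groups $i$ and $i+1$; the quadratic parts $\tfrac{\mu}{2}\Norm{\vx}^2 - \tfrac{\mu}{2}\Norm{\vy}^2$ and the linear term $\inner{\vb}{\vx}$ are spread evenly across the components. This gives $f_i(\vx,\vy) = \tfrac{\mu}{2}\Norm{\vx}_2^2 - \tfrac{\mu}{2}\Norm{\vy}_2^2 + \inner{\vx}{\mA_i\vy} - \inner{\vb_i}{\vx}$, which is clearly $(\mu,\mu)$-convex-concave. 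The crucial verification is that, although individually each $\mA_i$ may have operator norm $\Theta(\sqrt{n}\,L)$, the average smoothness constant $\frac{1}{n}\sum_i \Norm{\nabla \vg_i(\vz) - \nabla \vg_i(\vz')}^2$ collapses to $L^2 \Norm{\vz-\vz'}^2$ because the supports of the $\mA_i$ are nearly disjoint and the sum telescopes to a bounded tridiagonal operator; this is exactly the distinction between $L$-average and $L$-individual smoothness that makes the bound here stronger than \citet{xie2020lower}.

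For the lower-bound argument itself, I would track the random ``progress'' variable $K_t$ equal to the largest block index in the span of the iterates after $t$ SFO queries. Two separate phases appear. First, at least one query per component is needed to touch every group of coordinates, giving the $\Omega(n)$ term by a coupon-collector-style argument (or more directly by the observation that the algorithm cannot move any coordinate it has never received a gradient for). Second, once a particular group has been discovered, advancing $K_t$ by one further block requires sampling a specific component, which by the uniform sampling happens with probability $\Theta(1/n)$, so each expected block advancement costs $\Theta(\sqrt{n})$ queries after properly rescaling the chain length (since each bilinear couples ``half a block'' in both $\vx$ and $\vy$). Standard Krylov-space analysis for the chain then shows that the squared distance $\Norm{\hat\vx-\vx^*}^2 + \Norm{\hat\vy-\vy^*}^2$ decays no faster than $\bigl(1 - c/\kappa\bigr)^{K_t}$, so driving it below $\eps$ forces $K_t = \Omega(\kappa\log(1/\eps))$, and hence $t = \Omega(\sqrt{n}\,\kappa\log(1/\eps))$ in expectation.

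The main obstacle I expect is the simultaneous tightness of three conditions: the hard instance must be genuinely finite-sum with uniformly random SFO access (not an adversarial ``oracle chosen by the adversary'' as in many classical lower bounds), it must satisfy $L$-average smoothness rather than the easier per-component $L$-smoothness, and the progress argument must hold against any algorithm satisfying the span condition of Definition~\ref{dfn:SFO} including those that use projections. Handling projections requires choosing $\fX = \BR^d$ and $\fY = \BR^d$ (so projection is trivial), while handling arbitrary algorithms requires a rotation-invariance reduction: by applying a uniformly random orthogonal transformation to the coordinate blocks before presenting the problem, any deterministic span-based algorithm cannot do better than the canonical ``advance by one block per relevant query'' strategy, which is precisely the obstruction encoded by the zero-chain construction. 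Assembling these pieces yields the claimed $\Omega(n + \sqrt{n}\,\kappa\log(1/\eps))$ lower bound.
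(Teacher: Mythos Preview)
Your high-level plan---a bilinear zero chain with a finite-sum splitting---is the right one, but the step that produces the factor $\sqrt{n}$ has a genuine gap. You claim that under uniform sampling ``each expected block advancement costs $\Theta(\sqrt{n})$ queries after properly rescaling the chain length''; this is not correct. The waiting time to hit a fixed index under uniform sampling is $\Theta(n)$, not $\Theta(\sqrt{n})$, and rescaling the chain length changes how \emph{many} advancements are needed, not the cost of each one. More seriously, the SFO model in Definition~\ref{dfn:SFO} places no randomness constraint on $i_t$: the algorithm may choose the index at every step. Against your single interleaved chain, an adaptive algorithm would simply query at each step the unique component $f_{i}$ that links the current frontier block to the next, advancing one block per query and reaching the end of the chain in $\Theta\big((\kappa/\sqrt{n})\log(1/\eps)\big)$ SFO calls. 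Your construction therefore does not obstruct arbitrary SFO algorithms, only uniformly-sampling ones.

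The paper sidesteps both issues by a different splitting: it takes $n$ \emph{disjoint} copies of the chain, setting $f_i(\vx,\vy)=\lambda H(\mU_i\vx,\mU_i\vy)$ where the $\mU_i$ partition the identity. Because each $f_i$ touches only its own coordinate block, the $1/n$ in the average-smoothness definition yields a free factor $\sqrt{n}$ in the admissible smoothness budget, which is spent on taking $\alpha\approx\sqrt{n}/\kappa$ (instead of $1/\kappa$); hence each chain only needs length $d\approx(\kappa/\sqrt{n})\log(1/\eps)$ to keep $q^{d/2}>\eps$. The lower bound then follows from a one-line pigeonhole: after $T=nd/2$ queries of \emph{any} indices, some block $i$ has been queried at most $d/2$ times, so $\mU_i\vx^{(T)},\mU_i\vy^{(T)}\in\fF_{d/2}$ and the distance on that block alone exceeds $\eps$. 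This gives $\Omega(\sqrt{n}\,\kappa\log(1/\eps))$ with no randomness assumption on the oracle, and it also makes the rotation-invariance reduction you sketch unnecessary. Your separate $\Omega(n)$ argument via untouched coordinates is fine and matches the paper's second (separable-quadratic) instance.
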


Recently, \citet{alacaoglu2021stochastic} proposed the L-SVRE algorithm, which combines the idea of loopless SVRG~\cite{kovalev2020don} and extragradient~\cite{korpelevich1977extragradient}.
The original motivation of L-SVRE is to solve variational inequalities. We revise its analysis to adapt the standard finite-sum minimax problem (\ref{prob:main}) under the $(\mu,\mu)$-convex-concave assumption. We present the details of L-SVRE in Algorithm~\ref{algo:svre} and show its convergence behavior in Theorem \ref{thm:svre}. 

\begin{algorithm*}[t]
    \caption{$\text{AL-SVRE}\left(\{f_i(\vx,\vy)\}_{i=1}^n, (\vx_0, \vy_0), \beta, q, K, p, \{\tau_k\}_{k=1}^K, \{T_k\}_{k=1}^K \right)$}\label{algo:asvre}
    \begin{algorithmic}[1]
        \STATE \textbf{Initialize:} $\gamma = \frac{1 - \sqrt{q}}{1 + \sqrt{q}}$ and $\vu_0 = \vx_0$. \\[0.15cm]
        \STATE \textbf{for} $k = 1, \cdots, K$ \textbf{do}\\[0.15cm]
        \STATE\quad $(\tilde\vx_k, \tilde\vy_k) = \text{L-SVRE}~\big(\big\{f_i(\vx,\vy)+\frac{\beta}{2}\norm{\vx-\vu_{k-1}}^2\big\}_{i=1}^n, \vx_{k-1}, \vy_{k-1}, \tau_k, p, T_k\big)$ \\[0.15cm]
        \STATE\quad $\vx_k = \fP_{\fX} \left(\tilde\vx_k - \tau_k\nabla_{\vx} F_k\big(\tilde\vx_k, \tilde\vy_k\big)\right)$ \\[0.15cm]
        \STATE\quad $\vy_k = \fP_{\fY} \left(\tilde\vy_k + \tau_k\nabla_{\vy} F_k\big(\tilde\vx_k, \tilde\vy_k\big)\right)$  \\[0.15cm]
        \STATE\quad $\vu_k=\vx_k + \gamma(\vx_k-\vx_{k-1})$ \\[0.15cm]
        \STATE\textbf{end for} \\[0.15cm]
        \STATE \textbf{Output:} $(\vx_K, \vy_K)$.
    \end{algorithmic}
\end{algorithm*}
\begin{thm}\label{thm:svre}
Assume that $\mu_x = \mu_y = \mu$ and $\{f_i\}_{i=1}^n$ is $L$-average smooth. Then Algorithm \ref{algo:svre} with probability parameter $p = 1/(2 n)$ and stepsize $\tau = 1/(4\sqrt{n} L)$ satisfies
\begin{align}\label{ieq:convege-SVRE}
\E\norm{\vz_{k} - \vz^*}^2 
\le  4 \norm{\vz_0 - \vz^*}^2 \left(1 - \frac{1}{4(n + 2 \sqrt{n} L / \mu)}\right)^{k}\!.
\end{align}
\end{thm}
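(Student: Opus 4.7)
The plan is to lift the strongly monotone variational inequality analysis of \citet{alacaoglu2021stochastic} to the finite-sum $(\mu,\mu)$-convex-concave minimax setting. The key observation is that when $f$ is $(\mu,\mu)$-convex-concave and each $f_i$ is differentiable, the saddle-point operator $\vg(\vz) = (\nabla_\vx f(\vz), -\nabla_\vy f(\vz))$ is $\mu$-strongly monotone, that is, $\inner{\vg(\vz)-\vg(\vz')}{\vz-\vz'} \ge \mu\norm{\vz-\vz'}^2$, while the $L$-average smoothness of $\{f_i\}$ translates directly into $\frac{1}{n}\sum_i \norm{\vg_i(\vz)-\vg_i(\vz')}^2 \le L^2 \norm{\vz-\vz'}^2$. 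Thus the saddle point $\vz^\ast$ coincides with the unique zero of $\vg$ on $\fZ=\fX\times\fY$, and the iteration of Algorithm~\ref{algo:svre} is precisely a loopless SVRG extragradient step for this operator.

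The first step is to introduce the Lyapunov function
\begin{equation*}
\Phi_k = \E\Bigl[\norm{\vz_k-\vz^\ast}^2 + \frac{c\,\tau}{p}\,\frac{1}{n}\sum_{i=1}^n \norm{\vg_i(\vw_k)-\vg_i(\vz^\ast)}^2\Bigr],
\end{equation*}
for a suitable constant $c$. I would then carry out the following steps. (i) Apply the standard projection inequality to lines 4 and 6 to produce, for any $\vz\in\fZ$,
\begin{equation*}
\norm{\vz_{k+1}-\vz}^2 \le \norm{\bz_k-\vz}^2 - \norm{\bz_k-\vz_{k+1}}^2 - 2\tau\inner{\hg_k}{\vz_{k+1}-\vz},
\end{equation*}
where $\hg_k = \vg(\vw_k)+\vg_i(\vz_{k+1/2})-\vg_i(\vw_k)$. (ii) Add and subtract the extrapolated estimator $\vg(\vw_k)$ to pivot to $\vz_{k+1/2}$, take conditional expectation over the random index $i$, and use $\E[\hg_k\mid\vz_{k+1/2},\vw_k]=\vg(\vz_{k+1/2})$ together with the average-smoothness variance bound $\E\norm{\hg_k-\vg(\vz_{k+1/2})}^2\le L^2\norm{\vz_{k+1/2}-\vw_k}^2$. (iii) Invoke $\mu$-strong monotonicity to estimate $\inner{\vg(\vz_{k+1/2})}{\vz_{k+1/2}-\vz^\ast}\ge\mu\norm{\vz_{k+1/2}-\vz^\ast}^2$. (iv) Use the anchor identity $\bz_k = \alpha \vz_k + (1-\alpha)\vw_k$ with $\alpha = 1-p$ to rewrite $\norm{\bz_k-\vz^\ast}^2$ as a convex combination that couples $\norm{\vz_k-\vz^\ast}^2$ and $\norm{\vw_k-\vz^\ast}^2$, and absorb the residual $\norm{\vw_k-\vz_{k+1/2}}^2$ terms into the negative quadratic $-\norm{\bz_k-\vz_{k+1}}^2$ produced by the projection inequality, using Young's inequality.

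The main technical obstacle is calibrating the three free parameters $p$, $\tau$, and $c$ so that, after taking the outer expectation over the Bernoulli update $\vw_{k+1}=\vz_{k+1}$ with probability $p$, all cross terms can be absorbed and one obtains a clean contraction
\begin{equation*}
\Phi_{k+1} \le (1-\rho)\Phi_k
\end{equation*}
with $\rho = \Theta\bigl(1/(n+\sqrt{n}L/\mu)\bigr)$. The choice $p=1/(2n)$ and $\tau=1/(4\sqrt{n}L)$ is calibrated exactly to balance the variance term $\tau^2 L^2 \norm{\vz_{k+1/2}-\vw_k}^2/n$ against the negative quadratic and to make the Lyapunov coefficient on $\|\vw_k-\vz^\ast\|^2$ contract at the same geometric rate as $\|\vz_k-\vz^\ast\|^2$. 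This is the calculation that drives the condition $\tau \le 1/(4\sqrt{n}L)$ in the stepsize rule.

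Finally, unrolling the recursion $\Phi_{k+1}\le(1-\rho)\Phi_k$ for $k$ steps and using $\vw_0=\vz_0$ to bound $\Phi_0 \le (1+c\tau L^2/p)\norm{\vz_0-\vz^\ast}^2$, together with the fact that the coefficient $1+c\tau L^2/p$ is bounded by an absolute constant at most $4$ for the chosen parameters, yields exactly the claimed bound \eqref{ieq:convege-SVRE}. The only essentially new ingredient relative to \citet{alacaoglu2021stochastic} is the verification that SCSC objectives under $L$-average smoothness furnish a $\mu$-strongly monotone and $L$-average Lipschitz operator on $\fX\times\fY$, which we have established at the outset.
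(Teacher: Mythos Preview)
Your high-level strategy matches the paper's: verify that $(\mu,\mu)$-convex-concavity yields $\mu$-strong monotonicity of $\vg$ and that $L$-average smoothness yields the operator variance bound, then run the one-step projection/extragradient inequality, expand via the anchor $\bar\vz_k=\alpha\vz_k+(1-\alpha)\vw_k$, and close a Lyapunov contraction. Steps (i)--(iv) are exactly what the paper does.

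The gap is in the Lyapunov function you chose. Your own step (iv) says that expanding $\norm{\bar\vz_k-\vz^\ast}^2$ as a convex combination produces $\norm{\vw_k-\vz^\ast}^2$, and the variance term that step (ii) generates is $L^2\norm{\vz_{k+1/2}-\vw_k}^2$, which is absorbed by the negative $(1-\alpha)\norm{\vz_{k+1/2}-\vw_k}^2$ piece of the same expansion. At no point does $\frac{1}{n}\sum_i\norm{\vg_i(\vw_k)-\vg_i(\vz^\ast)}^2$ appear, so your proposed Lyapunov term has nothing to couple to. To force it in you would need $\norm{\vw_k-\vz^\ast}^2\le C\cdot\frac{1}{n}\sum_i\norm{\vg_i(\vw_k)-\vg_i(\vz^\ast)}^2$, but average smoothness only gives the reverse inequality. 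Relatedly, your claim that $\Phi_0\le 4\norm{\vz_0-\vz^\ast}^2$ requires $c\tau L^2/p\le 3$; with $\tau=1/(4\sqrt{n}L)$ and $p=1/(2n)$ this forces $c\le 6/(\sqrt{n}L)$, which is far too small to drive a contraction at rate $\Theta(1/(n+\sqrt{n}\kappa))$.

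The paper instead takes the Lyapunov $(1+\tau\mu-cp)\norm{\vz_k-\vz^\ast}^2+c\norm{\vw_k-\vz^\ast}^2$ with $c=\frac{2\tau\mu+2p}{\tau\mu+2p}\in(1,2)$, obtains the clean one-step bound $(1+\tau\mu)\E_k\norm{\vz_{k+1}-\vz^\ast}^2\le(1-p)\norm{\vz_k-\vz^\ast}^2+p\norm{\vw_k-\vz^\ast}^2$ and the trivial recursion for $\norm{\vw_{k+1}-\vz^\ast}^2$, and reads off the rate $\rho=\frac{p\tau\mu}{2(\tau\mu+p)}$ and the constant $4$ directly. If you swap your variance-based Lyapunov for this distance-based one, the rest of your outline goes through verbatim.

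One small correction: in the constrained setting $\vz^\ast$ is not a zero of $\vg$ but the solution of the variational inequality $\inner{\vg(\vz^\ast)}{\vz-\vz^\ast}\ge 0$ for all $\vz\in\fZ$; this is what step (iii) actually uses together with strong monotonicity.
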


The result of inequality (\ref{ieq:convege-SVRE}) means that L-SVRE could find $(\hat\vx,\hat\vy)$ satisfying $\BE[\norm{\hat\vx-\vx^*}^2+\norm{\hat\vy-\vy^*}^2]\leq\eps$ with $\fO((n {+} \sqrt{n}L/\mu)\log(1/\eps))$ iterations. Since we select $p=1/2n$ in the theorem, each iteration requires $\fO(1)$ SFO calls in expectation. Hence, we have proved that the upper bound complexity of L-SVRE matches the lower bound shown in Theorem~\ref{thm:balance-lower-bound}.

\subsection{Acceleration for Unbalanced SCSC Minimax}\label{sec:acc-scsc}

Note that the convergence result of L-SVRE is not perfect when the objective function is unbalanced. For example, we consider problem (\ref{prob:main}) in the case of $\kappa_x=\kappa>\sqrt{n}>\kappa_y=\fO(1)$. Then Theorem \ref{thm:svre} cannot leverage the well-conditioned assumption on $\vy$ and leads to SFO upper bound $\fO\left(\sqrt{n}\kappa\log(1/\eps))\right)$, which is 
worse than complexity $\tilde\fO(n\sqrt{\kappa}\log^3(1/\eps))$ achieved by deterministic algorithms~\cite{lin2020near}.

Our key intuition to establish a better SFO algorithm for unbalanced SCSC minimax problems is taking the advantage of the optimality of L-SVRE in the balanced case. We present the new method AL-SVRE in Algorithm~\ref{algo:asvre}, whose iteration applies L-SVRE to solve the following sub-problem
\begin{align}\label{prob::sub}
\min_{\vx\in\fX} \max_{\vy\in\fY} F_k(\vx, \vy) \triangleq f(\vx, \vy) + \frac{\beta}{2}\norm{\vx-\vu_{k-1}}^2.
\end{align}
Since Theorem \ref{thm:svre} only provides the convergence rate for L-SVRE by distance, we introduce additional projection gradient iterations (Lines 4-5 of Algorithm \ref{algo:asvre}) on the output of the sub-problem solver.
These steps help us control the accuracy with respect to primal and dual functions, which is helpful to establish the convergence results with respect to the duality gap in our main result Theorem~\ref{thm:asvre}.

\begin{lem}\label{lem:SVRE-prime-dual}
Suppose the function $f(\vx, \vy)\colon \fX \times \fY \to \BR$ is $(\mu_x, \mu_y)$-convex-concave and $L$-smooth. Denote the saddle point of $\min_{\vx\in\fX}\max_{\vy\in\fY}f(\vx,\vy)$ by $(\vx^*, \vy^*)$ and let the condition numbers of $f$ be $\kappa_x=L/\mu_x$ and $\kappa_y=L/\mu_y$. Assume that the point $(\hat\vx, \hat\vy)$ satisfies
\begin{align*}
    \norm{\hat\vx - \vx^*}^2 + \norm{\hat\vy - \vy^*}^2 \le \eps.
    \end{align*}
We introduce
\begin{align*}
\tilde\vx = \fP_{\fX} \left( \hat\vx - \eta\nabla_{\vx} f(\hat\vx, \hat\vy) \right)
~~\text{and}~~
\tilde\vy = \fP_{\fY} \left( \hat\vy + \eta\nabla_{\vy} f(\hat\vx, \hat\vy) \right),
\end{align*}
then it holds that 
\begin{align*}
\max_{\vy \in \fY} f(\tilde\vx, \vy) - f(\vx^*, \vy^*)
\le \left( \sqrt{2}(1 + \eta L) + 2(1 + \eta L)^2 + 2 \right) \kappa_y L \eps + \frac{\eps}{2\eta}
\end{align*}
and
\begin{align*}
 f(\vx^*, \vy^*) - \min_{\vx \in \fX} f(\vx, \tilde\vy) 
\le \left( \sqrt{2}(1 + \eta L) + 2(1 + \eta L)^2 + 2 \right) \kappa_x L \eps + \frac{\eps}{2\eta}.
\end{align*}
\end{lem}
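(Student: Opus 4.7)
The plan is to bound the primal gap $\max_{\vy} f(\tilde\vx, \vy) - f(\vx^*, \vy^*)$ by introducing $\vy^\sharp := \argmax_{\vy \in \fY} f(\tilde\vx, \vy)$. Since $\vy^* \in \argmax_\vy f(\vx^*, \vy)$, we have $f(\vx^*, \vy^\sharp) \le f(\vx^*, \vy^*)$, so the gap is upper bounded by $f(\tilde\vx, \vy^\sharp) - f(\vx^*, \vy^\sharp)$. Applying convexity of $f(\cdot, \vy^\sharp)$ then gives
\begin{align*}
\max_{\vy} f(\tilde\vx, \vy) - f(\vx^*, \vy^*) \;\le\; \langle \nabla_\vx f(\tilde\vx, \vy^\sharp), \tilde\vx - \vx^* \rangle.
\end{align*}
I would then split this inner product by inserting $\nabla_\vx f(\hat\vx, \hat\vy)$, and treat the two resulting pieces separately.

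For the piece $\langle \nabla_\vx f(\hat\vx, \hat\vy), \tilde\vx - \vx^* \rangle$, I would invoke the first-order optimality of the projection defining $\tilde\vx$: choosing $\vx = \vx^*$ as the test point yields
\begin{align*}
\eta \langle \nabla_\vx f(\hat\vx, \hat\vy), \tilde\vx - \vx^* \rangle \;\le\; \langle \hat\vx - \tilde\vx, \tilde\vx - \vx^* \rangle \;\le\; \tfrac{1}{2}\|\hat\vx - \vx^*\|^2 \;\le\; \tfrac{\eps}{2},
\end{align*}
where the middle inequality uses the three-point identity $2\langle \hat\vx - \tilde\vx, \tilde\vx - \vx^* \rangle = \|\hat\vx - \vx^*\|^2 - \|\hat\vx - \tilde\vx\|^2 - \|\tilde\vx - \vx^*\|^2$. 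This produces the $\eps/(2\eta)$ term in the statement.

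For the residual piece $\langle \nabla_\vx f(\tilde\vx, \vy^\sharp) - \nabla_\vx f(\hat\vx, \hat\vy), \tilde\vx - \vx^*\rangle$, I would apply Cauchy--Schwarz together with the $L$-smoothness of $f$ on $\fX\times\fY$ to get an upper bound of $L\sqrt{\|\tilde\vx - \hat\vx\|^2 + \|\vy^\sharp - \hat\vy\|^2}\,\|\tilde\vx - \vx^*\|$. The key auxiliary bounds are: (i) $\vx^* = \fP_\fX(\vx^* - \eta \nabla_\vx f(\vx^*, \vy^*))$ by the saddle-point variational inequality, so non-expansiveness of the projection plus smoothness give $\|\tilde\vx - \vx^*\| \le (1+\eta L)\sqrt{\|\hat\vx - \vx^*\|^2 + \|\hat\vy - \vy^*\|^2} \le (1+\eta L)\sqrt\eps$; (ii) Lipschitz continuity of the best-response map $\vx \mapsto \argmax_\vy f(\vx,\vy)$ with constant $\kappa_y$, which yields $\|\vy^\sharp - \vy^*\| \le \kappa_y \|\tilde\vx - \vx^*\|$; (iii) triangle inequalities $\|\tilde\vx - \hat\vx\| \le \|\tilde\vx - \vx^*\| + \|\vx^* - \hat\vx\|$ and $\|\vy^\sharp - \hat\vy\| \le \|\vy^\sharp - \vy^*\| + \|\vy^* - \hat\vy\|$. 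Combining these, using $1 + \kappa_y \le 2\kappa_y$ and $\kappa_y \ge 1$, yields the $[\sqrt2(1+\eta L) + 2(1+\eta L)^2 + 2]\kappa_y L\eps$ term.

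The symmetric bound on $f(\vx^*,\vy^*) - \min_\vx f(\vx,\tilde\vy)$ follows by exchanging the roles of $\vx$ and $\vy$ and noting that the best-response $\vy \mapsto \argmin_\vx f(\vx,\vy)$ is $\kappa_x$-Lipschitz. The main obstacle is avoiding a stray first-order term of the form $\langle \nabla_\vx f(\vx^*,\vy^*), \tilde\vx - \vx^*\rangle$, which can be positive due to boundary effects and cannot be dropped; the fix is precisely to use convexity of $f(\cdot, \vy^\sharp)$ (not smoothness at $\vx^*$) so that the base gradient is taken at $\tilde\vx$, and to let the projection's variational inequality absorb the resulting cross term into $\eps/(2\eta)$.
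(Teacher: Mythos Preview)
Your approach is correct and proves the lemma, but it differs from the paper's route. The paper works with the max-function $\Phi_f(\vx)=\max_{\vy}f(\vx,\vy)$ and linearizes it at $\hat\vx$, using both its convexity and its $2\kappa_yL$-smoothness (Lemma~\ref{lem:phi-psi}(b)). This yields
\[
\Phi_f(\tilde\vx)-\Phi_f(\vx^*)\le\langle\nabla_\vx f(\hat\vx,\vy_f^*(\hat\vx)),\tilde\vx-\vx^*\rangle+\kappa_yL\|\tilde\vx-\hat\vx\|^2,
\]
after which the inner product is split at $\nabla_\vx f(\hat\vx,\hat\vy)$ and the same projection inequality you use absorbs one piece into $\eps/(2\eta)$. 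By contrast, you bypass the smoothness of $\Phi_f$ entirely: you bound the gap by $\langle\nabla_\vx f(\tilde\vx,\vy^\sharp),\tilde\vx-\vx^*\rangle$ via convexity of $f(\cdot,\vy^\sharp)$ at $\tilde\vx$ (not at $\hat\vx$), and then control the gradient difference using only the $L$-smoothness of $f$ and the $\kappa_y$-Lipschitzness of the best-response map. Your decomposition therefore needs only part~(a) of Lemma~\ref{lem:phi-psi}, not part~(b), which makes it slightly more elementary. A minor consequence is that your best-response point is $\vy_f^*(\tilde\vx)$ rather than the paper's $\vy_f^*(\hat\vx)$, which picks up an extra $(1+\eta L)$ factor in the $\|\vy^\sharp-\vy^*\|$ estimate; after collecting terms your residual is bounded by $[\sqrt{2}(1+\eta L)+2(1+\eta L)^2]\kappa_yL\eps$, which is in fact \emph{tighter} than the stated constant (no ``$+2$'' term), so the lemma follows a fortiori.
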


The following lemma shows that we can upper bound the distance from given point to the saddle point by its primal-dual gap.
\begin{lem}\label{lem:dist-prime-dual}
Suppose the function $f(\vx, \vy): \fX \times \fY \to \BR$ is $(\mu_x, \mu_y)$-convex-concave and $L$-smooth. Denote the saddle point of $f$ by $(\vx^*, \vy^*)$. Then for any $\hat\vx \in \fX, \hat\vy \in \fY$, we have
\begin{align*}
\mu_x \norm{\hat\vx - \vx^*}^2 + \mu_y \norm{\hat\vy - \vy^*}^2
\le 2\left(\max_{\vy \in \fY} f(\hat\vx, \vy) - \min_{\vx \in \fX} f(\vx, \hat\vy)\right).
\end{align*}
\end{lem}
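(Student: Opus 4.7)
The plan is to lower bound the duality gap at $(\hat\vx,\hat\vy)$ by first dropping the inner $\max$/$\min$ to the saddle-point coordinates $(\vx^*,\vy^*)$, and then exploiting strong convexity/concavity together with the first-order optimality of $(\vx^*,\vy^*)$.

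First, I would observe the trivial bounds
\begin{align*}
\max_{\vy\in\fY} f(\hat\vx,\vy) \;\ge\; f(\hat\vx,\vy^*)
\qquad\text{and}\qquad
\min_{\vx\in\fX} f(\vx,\hat\vy) \;\le\; f(\vx^*,\hat\vy),
\end{align*}
so that it suffices to lower bound $f(\hat\vx,\vy^*) - f(\vx^*,\hat\vy)$ by $\tfrac{\mu_x}{2}\norm{\hat\vx-\vx^*}^2+\tfrac{\mu_y}{2}\norm{\hat\vy-\vy^*}^2$.

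Next, I would invoke strong convexity of $f(\cdot,\vy^*)$ and strong concavity of $f(\vx^*,\cdot)$ expanded around the saddle point:
\begin{align*}
f(\hat\vx,\vy^*) &\;\ge\; f(\vx^*,\vy^*) + \inner{\nabla_\vx f(\vx^*,\vy^*)}{\hat\vx-\vx^*} + \tfrac{\mu_x}{2}\norm{\hat\vx-\vx^*}^2,\\
f(\vx^*,\hat\vy) &\;\le\; f(\vx^*,\vy^*) + \inner{\nabla_\vy f(\vx^*,\vy^*)}{\hat\vy-\vy^*} - \tfrac{\mu_y}{2}\norm{\hat\vy-\vy^*}^2.
\end{align*}
The first-order optimality conditions for the saddle point over the convex sets $\fX,\fY$ give $\inner{\nabla_\vx f(\vx^*,\vy^*)}{\hat\vx-\vx^*}\ge 0$ and $\inner{\nabla_\vy f(\vx^*,\vy^*)}{\hat\vy-\vy^*}\le 0$, so subtracting the two inequalities cancels $f(\vx^*,\vy^*)$ and leaves
\begin{align*}
f(\hat\vx,\vy^*) - f(\vx^*,\hat\vy) \;\ge\; \tfrac{\mu_x}{2}\norm{\hat\vx-\vx^*}^2 + \tfrac{\mu_y}{2}\norm{\hat\vy-\vy^*}^2,
\end{align*}
and multiplying by $2$ yields the claim.

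There is really no serious obstacle: the whole argument is a textbook combination of the variational characterization of a saddle point and strong convexity/concavity, and notably does not use the $L$-smoothness hypothesis at all. The only mild care is in the degenerate cases: if $\mu_x=0$ or $\mu_y=0$, the corresponding term vanishes and the inequality still holds; and if $\fX$ or $\fY$ is unbounded, one must make sure the argument via the first-order optimality condition (written as a variational inequality) still applies, which it does since the saddle-point condition $\inner{\nabla_\vx f(\vx^*,\vy^*)}{\vx-\vx^*}\ge 0$ for all $\vx\in\fX$ holds by convex-analytic optimality regardless of boundedness.
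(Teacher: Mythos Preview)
Your proof is correct and follows essentially the same approach as the paper: bound the duality gap from below by $f(\hat\vx,\vy^*)-f(\vx^*,\hat\vy)$, then apply strong convexity of $f(\cdot,\vy^*)$ and strong concavity of $f(\vx^*,\cdot)$ at the saddle point. The only cosmetic difference is that the paper absorbs the first-order optimality condition into the statement ``$\vx^*$ minimizes $f(\cdot,\vy^*)$ over $\fX$'' (and similarly for $\vy^*$), writing directly $\tfrac{\mu_x}{2}\norm{\hat\vx-\vx^*}^2\le f(\hat\vx,\vy^*)-f(\vx^*,\vy^*)$, whereas you spell out the gradient term and then kill it with the variational inequality; these are the same step.
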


Then introduces four auxiliary quantities and provide their upper bounds, which are useful to the analyze the convergence of AL-SVRE.

\begin{lem}\label{lem:aux}
We use the notation of Algorithm~\ref{algo:asvre} and denote $(\vx_k^*, \vy_k^*)$ is the saddle point of $F_k$ and
\begin{align*}
        \Delta_f = \max_{\vy \in \fY} f(\vx_0, \vy) - \min_{\vx \in \fX} f(\vx, \vy_0).
\end{align*}
Then, for each $k\geq 1$, we have
\begin{align}
\label{eq:induction-2} &
\E \left[\norm{\vx_{k} - \vx_k^*}^2 + \norm{\vy_{k} - \vy_k^*}^2 \right] \le \eps_k, \\
\label{eq:induction-3} &
\E\left[ \max_{\vy \in \fY} F_k(\vx_k, \vy) - F_k(\vx_k^*, \vy_k^*)\right] \le \frac{2\Delta_f}{9} (1 - \rho)^k, \\
\label{eq:induction-4} &
\E\norm{\vx_k - \vx^*}^2 \le \frac{2\delta_k}{\mu_x}, \\
\label{eq:induction-6} &
\E\left[ \norm{\vx^*_k - \vx^*_{k+1}}^2\!+\!\norm{\vy^*_k - \vy^*_{k+1}}^2 \right] \le \frac{72\beta \delta_{k-2}}{\mu_x \min\{\mu_x, \mu_y\}},
\end{align}
where 
\begin{align*}
    \eps_k \triangleq \frac{2 \mu_y \Delta_f (1 - \rho)^k}{3(L+\beta)(7(L+\beta)+2\sqrt{n}\mu_y)}
\end{align*}
and
\begin{align*}
    \delta_k \triangleq \frac{8\Delta_f (1 - \rho)^{k+1}}{(\sqrt{q} - \rho)^2}.
\end{align*}
\end{lem}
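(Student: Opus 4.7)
The plan is to prove the four inequalities simultaneously by induction on $k$, since (\ref{eq:induction-6}) relies on (\ref{eq:induction-4}) at earlier indices, while (\ref{eq:induction-4}) is driven by the sub-problem accuracy captured in (\ref{eq:induction-2})--(\ref{eq:induction-3}). The base cases (where $\delta_{k-2}$ would index a non-positive iterate) follow from $\vu_0=\vx_0$ combined with the estimate $\mu_x\|\vx_0-\vx^*\|^2+\mu_y\|\vy_0-\vy^*\|^2 \le 2\Delta_f$ supplied by Lemma~\ref{lem:dist-prime-dual}.

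For (\ref{eq:induction-2}) and (\ref{eq:induction-3}), the key observation is that $F_k$ is $(\mu_x+\beta,\mu_y)$-convex-concave and $(L+\beta)$-smooth, and $\{F_{k,i}\}_{i=1}^n$ inherits $(L+\beta)$-average smoothness from $\{f_i\}_{i=1}^n$. With $\beta$ tuned so that $\mu_x+\beta=\mu_y$ (the balanced regime required by Theorem~\ref{thm:svre}), running L-SVRE on $F_k$ for $T_k$ steps starting from $(\vx_{k-1},\vy_{k-1})$ yields a geometrically decaying bound on $\E\|\tilde\vz_k-\vz_k^*\|^2$. The starting distance $\E\|\vz_{k-1}-\vz_k^*\|^2$ is itself controlled by the inductive hypotheses (\ref{eq:induction-2}) at step $k-1$ and (\ref{eq:induction-6}) at step $k-1$ via the triangle inequality. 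Picking $T_k$ large enough to reach accuracy $\eps_k/C$ for an appropriate constant and then applying Lemma~\ref{lem:SVRE-prime-dual} to the projected-gradient update in Lines~4--5 of Algorithm~\ref{algo:asvre} converts this distance accuracy into a primal-dual gap of $F_k$ at $(\vx_k,\vy_k)$; this is exactly (\ref{eq:induction-3}). Feeding that gap through Lemma~\ref{lem:dist-prime-dual} applied to $F_k$ immediately yields (\ref{eq:induction-2}) with the constants matching $\eps_k$.

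The chief difficulty lies in (\ref{eq:induction-4}), the Catalyst-type outer-loop convergence. Following the inexact Nesterov template for convex minimization, I would work with the primal function $F(\vx)\triangleq\max_{\vy\in\fY}f(\vx,\vy)$, which is $\mu_x$-strongly-convex, and a Lyapunov quantity of the form
\begin{align*}
\Phi_k = F(\vx_k)-F(\vx^*) + \frac{\mu_x+\beta}{2}\|\vv_k-\vx^*\|^2,
\end{align*}
where $\vv_k$ is the auxiliary estimating sequence associated with the extrapolation $\vu_k=\vx_k+\gamma(\vx_k-\vx_{k-1})$ with $\gamma=(1-\sqrt q)/(1+\sqrt q)$ and $q=\mu_x/(\mu_x+\beta)$. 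Using that $(\tilde\vx_k,\tilde\vy_k)$ approximately minimizes $\max_\vy F_k(\cdot,\vy)$ in $\vx$ up to the gap controlled by (\ref{eq:induction-3}), together with the $(\mu_x+\beta)$-strong convexity of $\max_\vy F_k(\cdot,\vy)$, one establishes a one-step recursion $\E\Phi_k\le(1-\sqrt q)\E\Phi_{k-1}+\zeta_k$ with errors $\zeta_k$ that, since $\rho<\sqrt q$, telescope against the $(1-\rho)^k$ decay in (\ref{eq:induction-3}), producing the denominator $(\sqrt q-\rho)^2$ that appears in $\delta_k$. Unrolling and invoking $\mu_x$-strong convexity of $F$ to pass from $F(\vx_k)-F(\vx^*)$ to $\|\vx_k-\vx^*\|^2$ delivers (\ref{eq:induction-4}); tracking the inexactness rigorously in the convex-concave setting is the most delicate part of the argument.

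For (\ref{eq:induction-6}), I would write the first-order optimality conditions satisfied by $\vz_k^*$ and $\vz_{k+1}^*$ as saddle points of $F_k$ and $F_{k+1}$ respectively. Subtracting these and invoking strong monotonicity of the gradient operator $\vg$ plus the $\beta$-quadratic perturbations gives
\begin{align*}
(\min\{\mu_x,\mu_y\}+\beta)\|\vz_k^*-\vz_{k+1}^*\| \le \beta\|\vu_k-\vu_{k-1}\|.
\end{align*}
Expanding $\vu_k-\vu_{k-1}=(1+\gamma)(\vx_k-\vx_{k-1})-\gamma(\vx_{k-1}-\vx_{k-2})$ and bounding each $\E\|\vx_j-\vx^*\|^2$ for $j\in\{k-2,k-1,k\}$ via (\ref{eq:induction-4})---with the slowest term $\E\|\vx_{k-2}-\vx^*\|^2\le 2\delta_{k-2}/\mu_x$ dominating---produces the stated bound $72\beta\delta_{k-2}/(\mu_x\min\{\mu_x,\mu_y\})$, the constant $72$ absorbing $(1+\gamma)^2$, $\gamma^2$, and the triangle-inequality factors.
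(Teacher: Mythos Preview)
Your inductive skeleton matches the paper's proof, including the handling of (\ref{eq:induction-4}) via the Catalyst estimate-sequence analysis on $\Phi(\vx)=\max_\vy f(\vx,\vy)$ (which the paper simply cites from \cite{lin2018catalyst}) and the bound on $\|\vu_k-\vu_{k-1}\|$ in (\ref{eq:induction-6}) through (\ref{eq:induction-4}) at indices $k,k-1,k-2$. Two details need correction.

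First, Lemma~\ref{lem:aux} does \emph{not} assume $\mu_x+\beta=\mu_y$; that choice is made only later in Corollary~\ref{cor:ASVRE}. To invoke Theorem~\ref{thm:svre} on the generally unbalanced $F_k$, the paper uses that $(\mu_x+\beta,\mu_y)$-convex-concave implies $(\mu,\mu)$-convex-concave with $\mu=\min\{\mu_x+\beta,\mu_y\}$, which is the quantity appearing in $T_k$. Relatedly, your monotonicity constant in (\ref{eq:induction-6}) should be $\min\{\mu_x+\beta,\mu_y\}$, not $\min\{\mu_x,\mu_y\}+\beta$, since the $\beta$-perturbation acts only on $\vx$; the paper instead compares function values $F_K(\vx_{K+1}^*,\vy_K^*)-F_K(\vx_K^*,\vy_{K+1}^*)$ and $F_{K+1}(\vx_{K+1}^*,\vy_K^*)-F_{K+1}(\vx_K^*,\vy_{K+1}^*)$, which after a Young inequality yields $\mu_x\|\vx_K^*-\vx_{K+1}^*\|^2+\mu_y\|\vy_K^*-\vy_{K+1}^*\|^2\le\beta\|\vu_K-\vu_{K-1}\|^2$ and hence the stated bound with $\min\{\mu_x,\mu_y\}$.

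Second, your route for (\ref{eq:induction-2})---derive the gap (\ref{eq:induction-3}) first, then convert back to distance via Lemma~\ref{lem:dist-prime-dual}---does not reproduce the stated $\eps_k$: Lemma~\ref{lem:dist-prime-dual} needs the \emph{full} duality gap, and the dual half of Lemma~\ref{lem:SVRE-prime-dual} carries the larger factor $(L+\beta)/(\mu_x+\beta)$ rather than $(L+\beta)/\mu_y$. The paper obtains (\ref{eq:induction-2}) directly: L-SVRE already gives $\E\|\tilde\vz_k-\vz_k^*\|^2\le\eps_k/3$, and the projected-gradient step inflates distance by at most $2(1+\tau_k^2(L+\beta)^2)<3$ (inequality~(\ref{eq:prime-dual-distance}) in the proof of Lemma~\ref{lem:SVRE-prime-dual}), giving $\eps_k$ on the nose; (\ref{eq:induction-3}) is then a one-sided application of Lemma~\ref{lem:SVRE-prime-dual} using only the $\kappa_y$-bound.
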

    
Based on above lemmas, we obtain the main result of this paper as follows.
\begin{thm}\label{thm:asvre}
Running Algorithm \ref{algo:asvre} with $q = \mu_x/(\mu_x + \beta)$, $\beta \ge 0$, $p=1/2n$, $\tau_k=1/(4\sqrt{n}(L+\beta))$ and
\begin{align*}
T_k = \Bigg\lceil4\left(n + \frac{2 \sqrt{n}(L+\beta)}{\min\{\mu_x + \beta, \mu_y\}}\right)
\log\left(\!12\!\left(\frac{2}{1-\rho}\!+\!\frac{1728 \beta(L + \beta)(7(L+\beta) + 2\sqrt{n}\mu_y))}{\mu_x\mu_y \min\{\mu_x, \mu_y\} (1 - \rho)^2(\sqrt{q} - \rho)^2}\right) \right)\Bigg\rceil,
\end{align*}
where $\rho < \sqrt{q}$,
then  we have 
\begin{align*}
    \E \left[\max_{\vy \in \fY} f(\vx_k, \vy) - \min_{\vx\in\fX} f(\vx, \vy_k)\right] 
\le \frac{916\Delta_f\left( \kappa_x L + \sqrt{n}(L + \beta) \right)\kappa_y^2}{\mu_x(\sqrt{q} - \rho)^2} (1 - \rho)^{k},
\end{align*}
where $\Delta_f = \max_{\vy \in \fY} f(\vx_0, \vy) - \min_{\vx \in \fX} f(\vx, \vy_0)$.
\end{thm}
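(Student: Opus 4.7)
The plan is to decompose the primal-dual gap at $(\vx_k,\vy_k)$ into the primal excess $P_k := \max_{\vy\in\fY} f(\vx_k,\vy) - f(\vx^*,\vy^*) \ge 0$ and the dual excess $D_k := f(\vx^*,\vy^*) - \min_{\vx\in\fX} f(\vx,\vy_k) \ge 0$, and to bound each using the auxiliary estimates of Lemma~\ref{lem:aux} together with elementary properties of $F_k$.

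For $P_k$, I would exploit the sandwich $f(\vx^*,\vy^*) \le F_k(\vx_k^*,\vy_k^*) \le f(\vx^*,\vy^*) + \tfrac{\beta}{2}\norm{\vx^* - \vu_{k-1}}^2$: the lower bound follows from $F_k \ge f$ (the regularizer is non-negative), and the upper bound from plugging $\vx = \vx^*$ into the outer minimization defining the $F_k$-saddle. Combined with the identity $\max_\vy F_k(\vx_k,\vy) = \max_\vy f(\vx_k,\vy) + \tfrac{\beta}{2}\norm{\vx_k - \vu_{k-1}}^2$, this yields
\begin{align*}
P_k \le \left[\max_{\vy\in\fY} F_k(\vx_k,\vy) - F_k(\vx_k^*,\vy_k^*)\right] + \frac{\beta}{2}\norm{\vx^* - \vu_{k-1}}^2.
\end{align*}
The bracket is controlled in expectation by $(\ref{eq:induction-3})$, and the remainder is handled by unpacking $\vu_{k-1} = \vx_{k-1} + \gamma(\vx_{k-1}-\vx_{k-2})$ and applying $(\ref{eq:induction-4})$ at indices $k-1$ and $k-2$.

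For $D_k$ the regularization acts only on $\vx$, so a symmetric $F_k$-based decomposition is less clean. Instead, I would work with the dual function $h(\vy) := \min_{\vx\in\fX} f(\vx,\vy)$. By Danskin's theorem together with $\mu_x$-strong convexity of $f(\cdot,\vy)$, the best-response map $\vy \mapsto \argmin_\vx f(\vx,\vy)$ is $\kappa_x$-Lipschitz, so $h$ is $\mu_y$-strongly concave and $L\sqrt{1+\kappa_x^2}$-smooth. Since $\vy^*$ maximizes $h$ with $h(\vy^*) = f(\vx^*,\vy^*)$, smoothness yields $D_k \lesssim L\kappa_x \norm{\vy_k - \vy^*}^2$. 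Splitting $\norm{\vy_k - \vy^*} \le \norm{\vy_k - \vy_k^*} + \norm{\vy_k^* - \vy^*}$, the first term is bounded by $(\ref{eq:induction-2})$; the second uses best-response Lipschitzness $\norm{\vy_k^* - \vy^*} \le \kappa_y\norm{\vx_k^* - \vx^*}$ (derived from the variational inequality for the saddle point together with $\mu_y$-strong concavity and $L$-smoothness in $\vy$) and then $\norm{\vx_k^* - \vx^*} \le \norm{\vx_k^* - \vx_k} + \norm{\vx_k - \vx^*}$, which reduces to $(\ref{eq:induction-2})$ and $(\ref{eq:induction-4})$. Overall $\E D_k \lesssim L\kappa_x\kappa_y^2(\delta_k/\mu_x + \eps_k)$.

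Substituting the explicit forms $\delta_k = 8\Delta_f(1-\rho)^{k+1}/(\sqrt{q}-\rho)^2$ and $\eps_k = 2\mu_y\Delta_f(1-\rho)^k/[3(L+\beta)(7(L+\beta)+2\sqrt{n}\mu_y)]$ produces the two target pieces: the $\delta_k/\mu_x$ branch yields $\kappa_x L \kappa_y^2/\mu_x \cdot \Delta_f(1-\rho)^k/(\sqrt{q}-\rho)^2$ directly, while the $\eps_k$ branch, after the $L^2/\mu_y$ from $\kappa_y^2\mu_y$ cancels part of the denominator in $\eps_k$, yields $\sqrt{n}(L+\beta)\kappa_y^2/\mu_x \cdot \Delta_f(1-\rho)^k/(\sqrt{q}-\rho)^2$. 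The main obstacle will be the constant bookkeeping: verifying that every contribution, including the $\tfrac{\beta}{2}\norm{\vx^* - \vu_{k-1}}^2$ piece from $P_k$, accumulates into the numerical prefactor $916$ while preserving the $(1-\rho)^k/(\sqrt{q}-\rho)^2$ dependence. A useful feature of handling $D_k$ via Danskin is that we only need the primal-side estimate $(\ref{eq:induction-3})$ of Lemma~\ref{lem:aux}, not a symmetric dual counterpart.
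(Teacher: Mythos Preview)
Your decomposition $P_k + D_k$ and the overall skeleton are sound, but the $D_k$ step has a genuine gap when $\fY$ is a proper constraint set. From smoothness of $h = \Psi_f$ at $\vy^*$ you only get
\[
D_k \;=\; h(\vy^*) - h(\vy_k)\;\le\; \langle \nabla h(\vy^*),\, \vy^* - \vy_k\rangle \;+\; \kappa_x L\,\norm{\vy_k - \vy^*}^2,
\]
and the first-order term $\langle \nabla h(\vy^*),\vy^*-\vy_k\rangle$ is \emph{nonnegative} by optimality, not nonpositive; it does not vanish unless $\vy^*$ is an interior maximizer. There is no control on $\norm{\nabla_\vy f(\vx^*,\vy^*)}$ in the constrained setting, so you cannot absorb this term into $\norm{\vy_k-\vy^*}^2$. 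Since the theorem is later applied to bounded $\fY$ in Section~\ref{sec:extensions}, this is not a cosmetic issue.

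The paper avoids this by exploiting the asymmetry of the regularizer: because $F_k$ perturbs only the $\vx$ part, $\nabla_\vy F_k = \nabla_\vy f$, so $\vy_k = \fP_\fY(\tilde\vy_k + \tau_k \nabla_\vy f(\tilde\vx_k,\tilde\vy_k))$ is literally a projection-gradient step for $f$ from the L-SVRE output $(\tilde\vx_k,\tilde\vy_k)$. One then bounds the distance of $(\tilde\vx_k,\tilde\vy_k)$ to the \emph{global} saddle $(\vx^*,\vy^*)$ (using the same chain $\norm{\vy_k^*-\vy^*}\le\kappa_y\norm{\vx_k^*-\vx^*}$ and $(\ref{eq:induction-2})$, $(\ref{eq:induction-4})$ that you outlined) and applies Lemma~\ref{lem:SVRE-prime-dual} directly with $f$. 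That lemma is precisely what handles the constrained first-order term, through the projection inequality $(\ref{eq:hat-f-sc})$ rather than bare smoothness of $h$. For $P_k$ this trick is unavailable (since $\nabla_\vx F_k \ne \nabla_\vx f$), and the paper instead uses the Catalyst guarantee $\E[\Phi(\vx_k)-\Phi^*]\le\delta_k$ established inside the proof of Lemma~\ref{lem:aux}; your $P_k$ route via $(\ref{eq:induction-3})$ and $(\ref{eq:induction-4})$ is also valid in principle, though it picks up a stray $(1-\rho)^{-1}$ from $\delta_{k-2}$ that makes hitting the exact constant $916$ harder.
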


Theorem \ref{thm:asvre} shows  the $\eps$-saddle point can be obtained by calling $K = \tilde\fO\big(\sqrt{(\mu_x {+} \beta)/\mu_x}\log(1/\eps)\big)$ times AL-SVRE to solve the sub-problem. By minimizing the product of dominant terms (ignore logarithmic terms) $\sqrt{(\mu_x+\beta)/\mu_x}$ and $\left(n + 2 \sqrt{n}(L+\beta)/\min\{\mu_x + \beta, \mu_y\}\right)$ in $K$ and $T_k$ respectively, we decide the choice of $\beta$ as follows (since we have suppose $\mu_x\leq\mu_y$)
\begin{align}\label{choice:beta}
\beta = \begin{cases}
    \mu_y - \mu_x, ~~&\text{if } \kappa_y \ge \sqrt{n}, \\
        L/\sqrt{n} - \mu_x, ~~&\text{if } \kappa_x > \sqrt{n} > \kappa_y, \\
    0, ~~&\text{otherwise,}
\end{cases}
\end{align}

Combing the result of Theorem \ref{thm:asvre} and equation (\ref{choice:beta}), we immediately obtain the following corollary, which implies AL-SVRE has near optimal SFO upper bound for unbalanced SCSC minimax optimization.
\begin{cor}\label{cor:ASVRE}
Suppose Problem (\ref{prob:main}) satisfies $\mu_x < \mu_y$. If running AL-SVRE with the setting of Theorem \ref{thm:asvre} and letting $\beta$ as (\ref{choice:beta}), $\rho = 0.5 \sqrt{q}$  and 
\begin{align*}
    K = \ceil{\frac{2}{\sqrt{q}} \log\left( 10992\sqrt{n}\Delta_f \kappa_y \kappa_x^3\right) },
    \end{align*}
then we have 
\begin{align*}
    \E \Big[\max_{\vy \in \fY} f(\vx_K, \vy) - \min_{\vx\in\fX} f(\vx, \vy_K)\Big] \le \eps 
    \end{align*}
with the number of SFO calls at most
{\small\begin{align*}
\fO\left(\sqrt{n(\sqrt{n} + \kappa_x)(\sqrt{n} + \kappa_y)}\log(n\kappa_x) \log\left(\frac{n\Delta_f \kappa_y \kappa_x}{\eps}\right) \right).
\end{align*}}
\end{cor}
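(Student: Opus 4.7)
The plan is to substitute the prescribed parameter choices directly into Theorem~\ref{thm:asvre} and then perform a case analysis over the three branches of (\ref{choice:beta}). First, with $\rho = 0.5\sqrt{q}$ we have $(\sqrt{q}-\rho)^2 = q/4$, and since each branch of (\ref{choice:beta}) satisfies $0\le\beta\le L$ (in Case 1, $\beta=\mu_y-\mu_x\le L$; in Case 2, $\beta\le L/\sqrt{n}$; in Case 3, $\beta=0$), we also have $L+\beta\le 2L$. Plugging these reductions into the RHS of Theorem~\ref{thm:asvre} collapses it to the form
\[
    C_1\,\Delta_f\,L\,(\kappa_x+\sqrt{n})\,\kappa_y^2\,\frac{\mu_x+\beta}{\mu_x^2}\,(1-\rho)^K
\]
for an absolute constant $C_1$. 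Choosing $K$ as in the corollary statement and using $(1-\rho)^K\le \exp(-\rho K)$, this quantity falls below $\eps$, provided the constant $10992$ inside the logarithm defining $K$ is large enough to absorb the $\kappa_x,\kappa_y,\sqrt{n},L/\mu_x$ prefactors above for all three choices of $\beta$ simultaneously; this is mechanical bookkeeping.

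Next I count SFO calls. Each L-SVRE iteration refreshes the snapshot gradient $\vg(\vw_k)$ only with probability $p=1/(2n)$ at a cost of $n$, and the stochastic extragradient step uses two component gradients; so the expected per-inner-iteration cost is $O(1)$, and the expected per-outer-iteration cost is $O(T_k)$, noting that the projected-gradient correction on Lines~4--5 contributes an additional $O(n)$ that is dominated by $T_k$. Thus the total SFO work is $K$ times (the expected value of) $T_k$, up to $O(n)$.

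The core case analysis plugs (\ref{choice:beta}) into $q=\mu_x/(\mu_x+\beta)$ and into $T_k = \tilde O\bigl(n + \sqrt{n}(L+\beta)/\min\{\mu_x+\beta,\mu_y\}\bigr)$. When $\kappa_y\ge\sqrt{n}$, taking $\beta=\mu_y-\mu_x$ gives $\mu_x+\beta=\mu_y$, $q=\kappa_y/\kappa_x$, and $T_k=\tilde O(n+\sqrt{n}\kappa_y)=\tilde O(\sqrt{n}\kappa_y)$, so $K\,T_k=\tilde O(\sqrt{n\kappa_x\kappa_y})$. When $\kappa_x>\sqrt{n}>\kappa_y$, taking $\beta=L/\sqrt{n}-\mu_x$ gives $\mu_x+\beta=L/\sqrt{n}$, which is the minimum since $L/\sqrt{n}<\mu_y$; hence $T_k=\tilde O(n)$ and $q=\sqrt{n}/\kappa_x$, yielding $K\,T_k=\tilde O(n^{3/4}\sqrt{\kappa_x})$. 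Otherwise $\kappa_x\le\sqrt{n}$, and $\beta=0$ gives $q=1$, so $K=O(\log)$ and $T_k=\tilde O(n)$, totalling $\tilde O(n)$. A line of arithmetic verifies each of these three bounds is at most $\sqrt{n(\sqrt{n}+\kappa_x)(\sqrt{n}+\kappa_y)}$ up to constants, which together with the two logarithmic factors (one from $K$, one from the Catalyst subproblem tolerance inside $T_k$) gives the stated rate.

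The only nontrivial obstacle is the uniform constant-tracking across the three regimes: one must check that the numeric constant in $K$ dominates the prefactor from Theorem~\ref{thm:asvre} after substituting $\rho=0.5\sqrt{q}$ for each of the three choices of $\beta$, and that the ceilings in $K$ and $T_k$ do not inflate the bound beyond the claimed $\sqrt{n(\sqrt{n}+\kappa_x)(\sqrt{n}+\kappa_y)}$. Once these constants are reconciled, the rest of the argument is direct substitution and the elementary inequality $(1-\rho)^K\le e^{-\rho K}$.
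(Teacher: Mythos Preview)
Your proposal is correct and follows essentially the same approach as the paper's own proof: substitute $\rho=0.5\sqrt{q}$ and $\beta\le L$ into the bound of Theorem~\ref{thm:asvre}, use $1/q\le\mu_y/\mu_x$ to absorb the prefactor into $10992\sqrt{n}\Delta_f\kappa_y\kappa_x^3$, and then do the identical three-case analysis on (\ref{choice:beta}) to bound $K\cdot T_k$ by $\sqrt{n(\sqrt{n}+\kappa_x)(\sqrt{n}+\kappa_y)}$ up to the two logarithmic factors. The paper's argument differs only in bookkeeping details (it tracks the explicit constants $2748$ and $10616880$ inside the logs rather than a generic $C_1$), not in structure.
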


The design of AL-SVRE aims to sufficiently take the advantage of AL-SVRE's optimality for the balanced SCSC minimax problem. Hence, the choice of $\beta$ makes the sub-problem (\ref{prob::sub})  more balanced than the main problem (\ref{prob:main}). In contrast, previous catalyst-type methods~\cite{lin2020near,wang2020improved,vladislav2021accelerated} select a larger $\beta$ to guarantee the sub-problem be well-conditioned in one of the variables. However, such strategy leads to that the sub-problem is still so unbalanced and we need two-loops of iterations to solve it. As a result, the implementation of the whole procedure is more complicated and almost cannot be used in practical.

\paragraph{Discussion}

Note that \citet{vladislav2021accelerated}'s work supposes each $f_i(\vx,\vy)$ is $L_i$-smooth, that is
\begin{align}\label{asm:each-smooth}
    \norm{\nabla f_i(\vz_1) - \nabla f_i(\vz_2)} \leq L_i\norm{\vz_1-\vz_2}
\end{align}
where $\vz_1=(\vx_1,\vy_1)$ and $\vz_2=(\vx_2,\vy_2)$, which leads to the objective function $f(\vz)=\frac{1}{n}\sum_{i=1}^n f_i(\vz)$ is $L_G$-smooth with $L_G=\frac{1}{n}\sum_{i=1}^n L_i$.
Furthermore, we can verify that $\{f_i\}_{i=1}^n$ is $L_A$-average smooth, where $L_A = \sqrt{\frac{1}{n} \sum_{i=1}^n L_i^2}$ (c.f. Lemma \ref{lem:smooth-average-smooth}).

Consider a simple class of functions $\{g_i(\vz)\}_{i=1}^n$ such that $g_i(\vz) = \frac{\sqrt{n} L}{2} (\ve_i^{\top} \vz)^2$, where $\{\ve_i\}_{i=1}^n$ is the standard basis of $\BR^n$.  
It is easily to check that $g_i$ is $\sqrt{n} L$-smooth and
\begin{align*}
    \frac{1}{n} \sum_{i=1}^n \norm{\nabla g_i(\vz_1) - \nabla g_i(\vz_2)}^2 
    = \frac{1}{n} \sum_{i=1}^n (n L^2) \norm{\ve_i \ve_i^{\top} (\vz_1 - \vz_2)}^2
    = L^2 \sum_{i=1}^n (z_{1, i} - z_{2, i})^2 = L^2 \norm{\vz_1 - \vz_2}^2,
\end{align*}
which implies that $\{g_i\}_{i=1}^n$ is $L$-average smooth.


Using notations of~\citet{vladislav2021accelerated} on $\{g_i\}_{i=1}^n$, we have $L_i=\sqrt{n} L$ and $L_G=\frac{1}{n}\sum_{i=1}^n L_i=\sqrt{n}L$. Hence, their smoothness parameter $L_G$ is larger than the average-smoothness parameter $L$ we used in this paper.

\section{Extensions to Non-SCSC Minimax}\label{sec:extensions}

We can also apply AL-SVRE to solve convex-concave minimax without SCSC assumption.

\subsection{Convex-Strongly-Concave Case} 
If the objective function $f(\vx,\vy)$ is $(0,\mu_y)$-convex-concave and $\fX$ is bounded with diameter $D_x$, we construct the auxiliary SCSC function to approximate it as follows:
\begin{align}\label{fun:csc}
    f_{\eps, \vx_0} (\vx, \vy) \triangleq f(\vx, \vy) + \frac{\eps}{4 D_x^2} \norm{\vx - \vx_0}^2.
\end{align}
Then the difference of duality gaps between $f_{\eps, \vx_0}$ and $f$ should be small. The following lemma presents this fact formally.
\begin{lem}\label{lem:csc}
Suppose $f(\vx, \vy)$ is $(0, \mu_y)$-convex-concave, $\fX$ is bounded with diameter $D_x$ and $\vx_0\in\fX$. Consider the function
$f_{\eps, \vx_0} (\vx, \vy)$ defined as (\ref{fun:csc}).
Then for any $(\hat\vx, \hat\vy) \in \fX \times \fY$, we have
\begin{align*}
\max_{\vy \in \fY} f (\hat{\vx}, \vy) - \min_{\vx \in \fX} f (\vx, \hat{\vy}) 
\le \frac{\eps}{2} + \max_{\vy \in \fY} f_{\eps, \vx_0} (\hat{\vx}, \vy) - \min_{\vx \in \fX} f_{\eps, \vx_0} (\vx, \hat{\vy}).
\end{align*}
\end{lem}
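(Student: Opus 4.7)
The proof is essentially a short pointwise comparison exploiting that the regularizer $\frac{\varepsilon}{4D_x^2}\|\vx-\vx_0\|^2$ is nonnegative, bounded uniformly over $\fX$, and only acts on the $\vx$ variable.

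The plan is to first note that since $\fX$ has diameter $D_x$ and $\vx_0\in\fX$, for every $\vx\in\fX$ we have $\|\vx-\vx_0\|^2\le D_x^2$, so the perturbation satisfies
\begin{align*}
0 \;\le\; f_{\varepsilon,\vx_0}(\vx,\vy)-f(\vx,\vy) \;=\; \frac{\varepsilon}{4D_x^2}\|\vx-\vx_0\|^2 \;\le\; \frac{\varepsilon}{4}
\end{align*}
uniformly in $\vy\in\fY$. This pointwise two-sided sandwich is the only ingredient I need.

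Next I would bound the two halves of the duality gap separately. For the primal part, the lower inequality above gives $f(\hat\vx,\vy)\le f_{\varepsilon,\vx_0}(\hat\vx,\vy)$ for every $\vy\in\fY$, and taking the supremum over $\vy$ yields $\max_{\vy\in\fY}f(\hat\vx,\vy)\le\max_{\vy\in\fY}f_{\varepsilon,\vx_0}(\hat\vx,\vy)$. For the dual part, the upper inequality above gives $f(\vx,\hat\vy)\ge f_{\varepsilon,\vx_0}(\vx,\hat\vy)-\varepsilon/4$ for every $\vx\in\fX$; taking the infimum over $\vx$ preserves the additive constant, so $\min_{\vx\in\fX}f(\vx,\hat\vy)\ge\min_{\vx\in\fX}f_{\varepsilon,\vx_0}(\vx,\hat\vy)-\varepsilon/4$, equivalently $-\min_{\vx\in\fX}f(\vx,\hat\vy)\le-\min_{\vx\in\fX}f_{\varepsilon,\vx_0}(\vx,\hat\vy)+\varepsilon/4$.

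Adding the two displayed inequalities gives exactly
\begin{align*}
\max_{\vy\in\fY}f(\hat\vx,\vy)-\min_{\vx\in\fX}f(\vx,\hat\vy) \;\le\; \frac{\varepsilon}{4}+\max_{\vy\in\fY}f_{\varepsilon,\vx_0}(\hat\vx,\vy)-\min_{\vx\in\fX}f_{\varepsilon,\vx_0}(\vx,\hat\vy),
\end{align*}
which is stronger than the claimed bound (the $\varepsilon/2$ in the statement absorbs the $\varepsilon/4$ slack). There is no real obstacle here; the only point to be slightly careful about is that the regularizer depends only on $\vx$ and not on $\vy$, so the perturbation commutes with $\max_\vy$ in the primal term and only contributes when we take $\min_\vx$ in the dual term. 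The diameter assumption on $\fX$ is what makes the perturbation $\fO(\varepsilon)$ uniformly; strong concavity in $\vy$ is not actually used in this particular lemma, only in the subsequent application of the SCSC algorithm to $f_{\varepsilon,\vx_0}$.
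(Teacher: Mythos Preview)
Your argument is correct and follows the same pointwise-comparison approach as the paper: bound the difference $f_{\varepsilon,\vx_0}-f$ uniformly on $\fX\times\fY$ and push the bound through the max/min. The only difference is that the paper uses the symmetric estimate $|f-f_{\varepsilon,\vx_0}|\le\varepsilon/4$ on both the primal and the dual term, incurring $\varepsilon/4$ twice, whereas you exploit the nonnegativity of the regularizer to avoid any loss on the primal side and end up with the sharper constant $\varepsilon/4$; this refinement is harmless for the downstream use of the lemma.
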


Lemma \ref{lem:csc} means that any $\eps/2$-saddle-point of $f_{\eps, \vx_0}$ is also an $\eps$-saddle-point of $f$. Hence, we can directly run AL-SVRE on $f_{\eps,\vx_0}$ and connect Lemma~\ref{lem:csc} and Theorem~\ref{thm:asvre} to establish the convergence result for $(0,\mu_y)$-convex-concave minimax optimization as follows.

\begin{figure*}[t]
\centering
\begin{tabular}{cccc}
\includegraphics[scale=0.29]{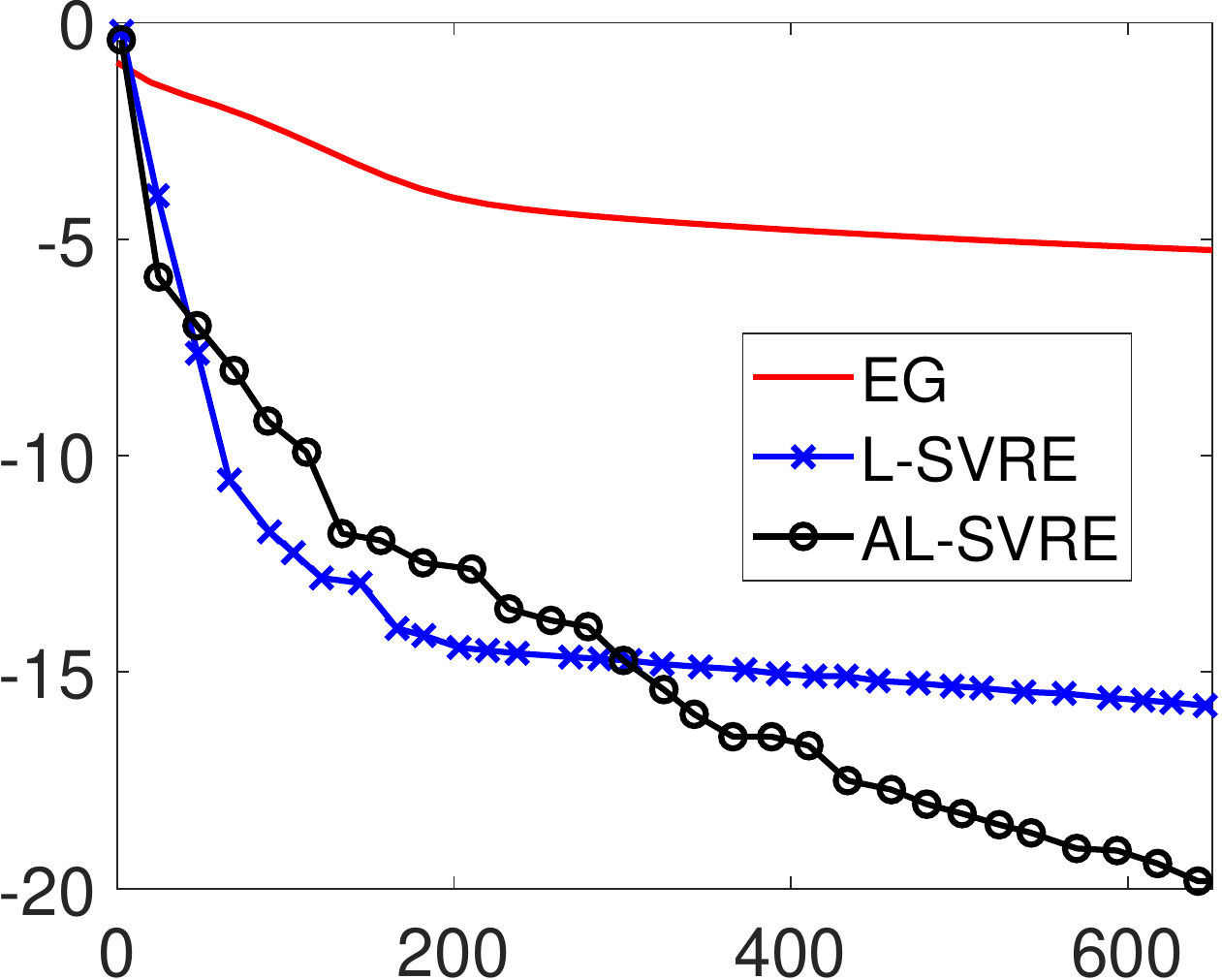}  & 
\includegraphics[scale=0.29]{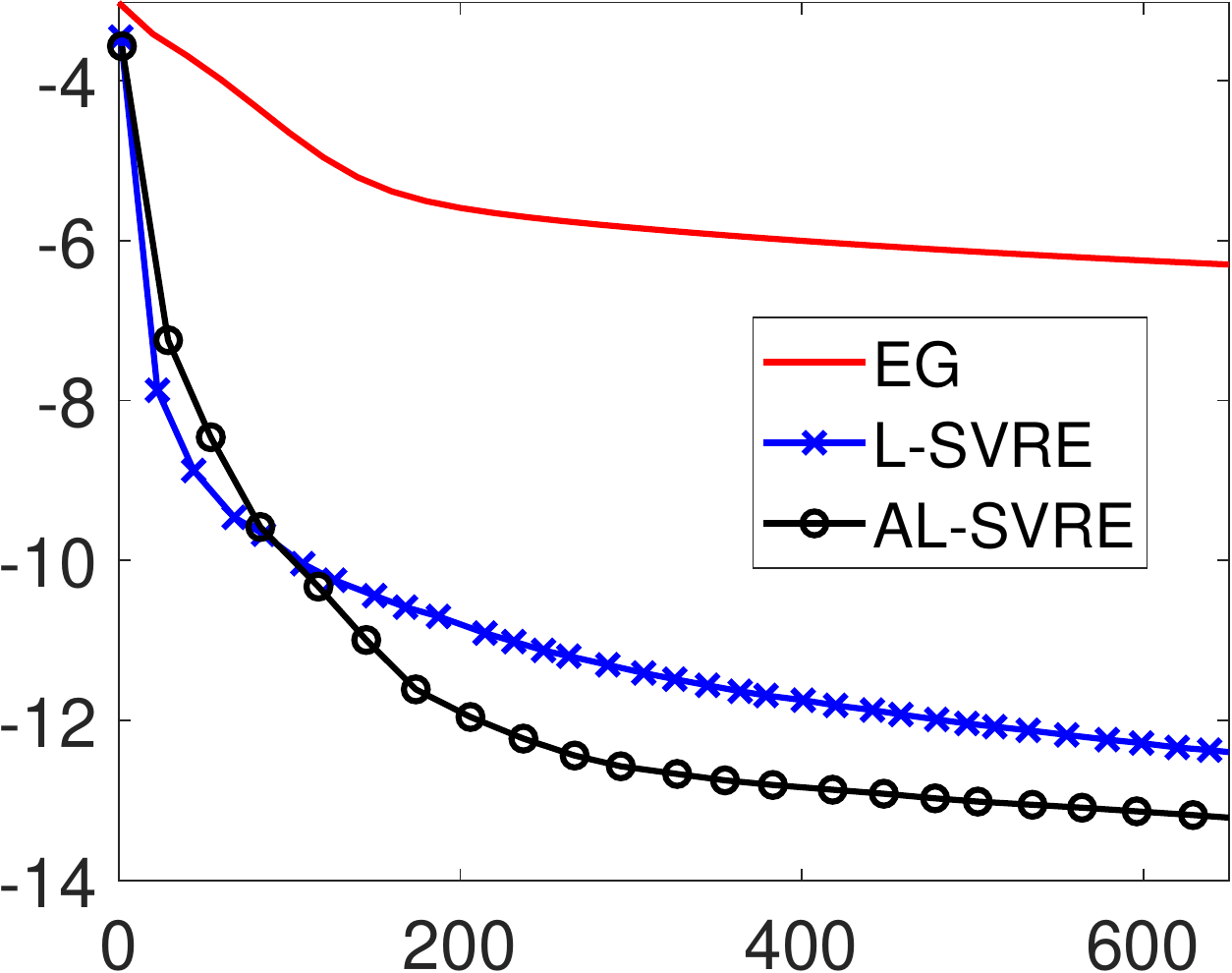} &
\includegraphics[scale=0.29]{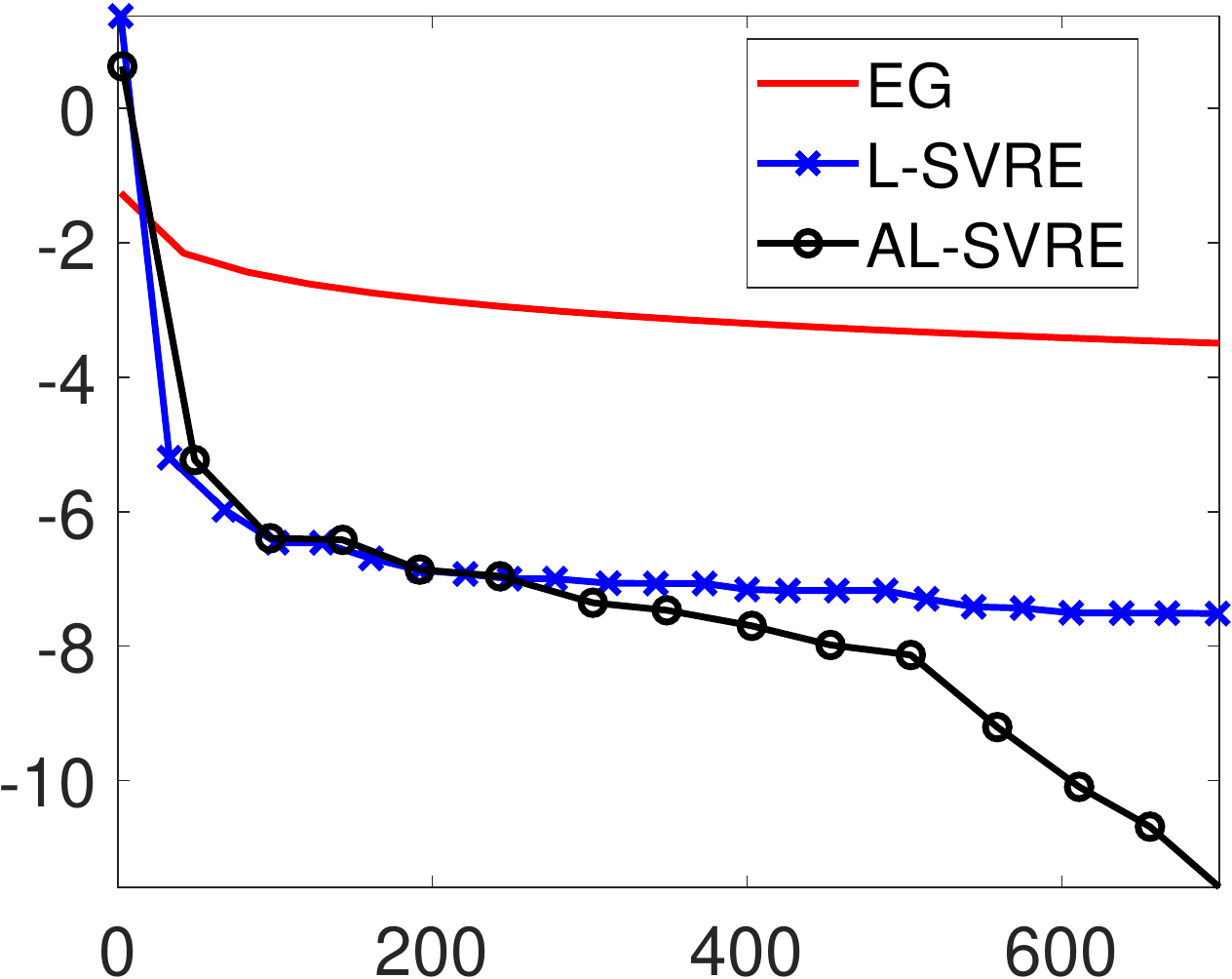} &  \includegraphics[scale=0.29]{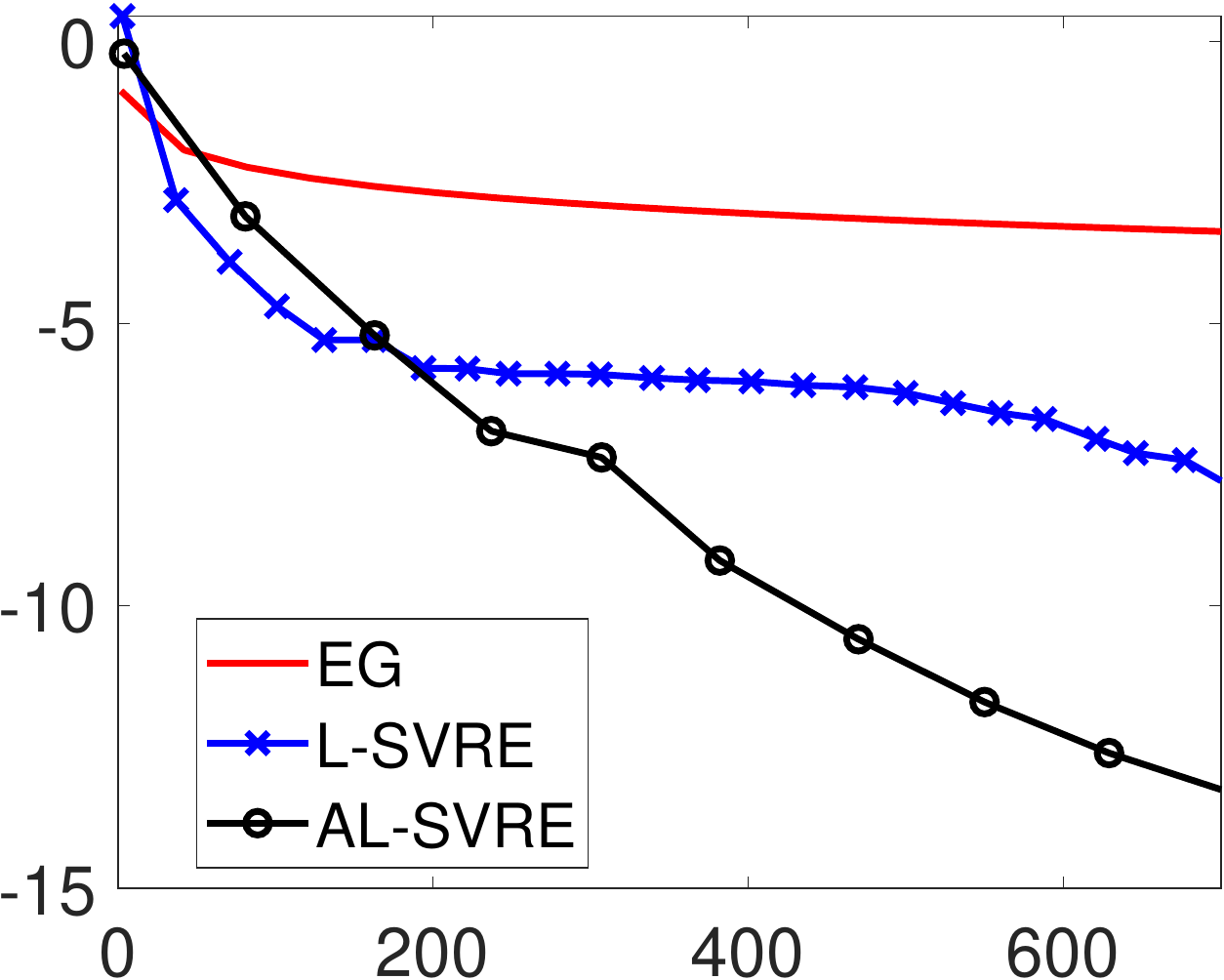} \\[0.1cm]
\small (a) AUC, a9a & \small  (b) AUC, w8a & 
\small  (c) wireless-500 & \small (d) wireless-1000
\end{tabular}
\caption{We demonstrate comparison on AUC maximization for SCSC minimax in sub-figures (a) and (b); and wireless communication for general convex-concave minimax in sub-figures (c) and (d).}\vskip-0.2cm\label{figure:experiment}
\end{figure*}

\begin{cor}\label{cor:csc}
Under settings of Lemma \ref{lem:csc}, for $\eps \le 4 L D_x^2$, the total complexity of SFO calls for finding an $\eps/2$-saddle-point of $f_{\eps, \vx_0}$, which is also an $\eps$-saddle-point of $f$, is 
\begin{align*}
\mbox{\small$\displaystyle{
\tilde\fO\left(\!\left(n\!+\!D_x \sqrt{\frac{n L \kappa_y}{\eps}}\!+\!n^{3/4}\sqrt{\kappa_y}\!+\!n^{3/4}D_x \sqrt{\frac{L}{\eps}}\right)\log\left(\frac{1}{\eps}\right)\!\right)}$}
\end{align*}
\end{cor}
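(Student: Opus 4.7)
My strategy is to apply Corollary~\ref{cor:ASVRE} to the regularized objective $f_{\eps, \vx_0}$, which is strongly-convex-strongly-concave, and then invoke Lemma~\ref{lem:csc} to transfer an $\eps/2$-saddle of $f_{\eps, \vx_0}$ back to an $\eps$-saddle of $f$. The whole argument is essentially a substitution of parameters into the existing SCSC bound followed by some elementary algebra.

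\textbf{Step 1: parameters of the regularized problem.} First I would read off the problem constants of $f_{\eps, \vx_0}(\vx,\vy) = \frac{1}{n}\sum_{i=1}^n \big( f_i(\vx,\vy) + \frac{\eps}{4D_x^2}\norm{\vx-\vx_0}^2 \big)$. Adding a quadratic in $\vx$ alone preserves concavity in $\vy$ and adds $\frac{\eps}{2D_x^2}$ to both the strong-convexity modulus in $\vx$ and the (average) smoothness constant. Consequently $f_{\eps, \vx_0}$ is $(\mu_x', \mu_y')$-convex-concave and $L'$-average smooth with
\begin{align*}
\mu_x' = \frac{\eps}{2 D_x^2}, \quad \mu_y' = \mu_y, \quad L' = L + \frac{\eps}{2 D_x^2} \le 3 L,
\end{align*}
where the last inequality uses the standing assumption $\eps \le 4 L D_x^2$. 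The resulting condition numbers are therefore $\kappa_x' = L'/\mu_x' = \fO(L D_x^2 / \eps)$ and $\kappa_y' = L'/\mu_y = \fO(\kappa_y)$.

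\textbf{Step 2: invoke Corollary~\ref{cor:ASVRE} and expand.} Provided the accuracy $\eps$ is small enough that $\mu_x' \le \mu_y'$ (the reverse regime is subsumed, since the bound below is symmetric in $\kappa_x'$ and $\kappa_y'$), Corollary~\ref{cor:ASVRE} applied to $f_{\eps, \vx_0}$ with target accuracy $\eps/2$ produces an $\eps/2$-saddle after
\begin{align*}
\tilde\fO\!\left( \sqrt{n\bigl(\sqrt{n} + \kappa_x'\bigr)\bigl(\sqrt{n} + \kappa_y'\bigr)} \, \log(1/\eps) \right)
\end{align*}
SFO calls (the polylog factors of Corollary~\ref{cor:ASVRE} in $\Delta_f$, $\kappa_x'$, $\kappa_y'$ are absorbed into $\tilde\fO$, using that the initial duality gap of $f_{\eps, \vx_0}$ exceeds that of $f$ by at most $\sup_{\vx\in\fX}\frac{\eps}{4D_x^2}\norm{\vx - \vx_0}^2 \le \eps/4$). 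Substituting $\kappa_x' = \fO(L D_x^2/\eps)$ and $\kappa_y' = \fO(\kappa_y)$ and expanding,
\begin{align*}
n\bigl(\sqrt{n} + \kappa_x'\bigr)\bigl(\sqrt{n} + \kappa_y'\bigr) \;\lesssim\; n^2 + n^{3/2}\kappa_y + \frac{n^{3/2} L D_x^2}{\eps} + \frac{n \kappa_y L D_x^2}{\eps},
\end{align*}
so taking square roots termwise (using $\sqrt{a+b+c+d} \le \sqrt{a}+\sqrt{b}+\sqrt{c}+\sqrt{d}$) gives
\begin{align*}
\sqrt{n\bigl(\sqrt{n} + \kappa_x'\bigr)\bigl(\sqrt{n} + \kappa_y'\bigr)} \;\lesssim\; n + n^{3/4}\sqrt{\kappa_y} + n^{3/4} D_x \sqrt{\frac{L}{\eps}} + D_x \sqrt{\frac{n L \kappa_y}{\eps}},
\end{align*}
which is exactly the target complexity once combined with the $\log(1/\eps)$ factor. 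Finally, Lemma~\ref{lem:csc} converts the $\eps/2$-saddle of $f_{\eps, \vx_0}$ into an $\eps$-saddle of $f$, yielding the stated SFO bound.

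\textbf{Main obstacle.} There is no deep technical difficulty here; the proof is essentially a parameter substitution. The only subtle point is ensuring that the smoothness blow-up from the added quadratic is negligible, which is precisely the purpose of the hypothesis $\eps \le 4 L D_x^2$; and that the initial-gap dependence (which appears inside logarithms in Corollary~\ref{cor:ASVRE}) remains polylogarithmic after regularization, which is immediate since the extra quadratic term is bounded by $\eps/4$ on $\fX$. These checks affect only hidden polylog factors and therefore do not change the stated rate.
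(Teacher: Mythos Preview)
Your proposal is correct and follows essentially the same approach as the paper: verify that $f_{\eps,\vx_0}$ is SCSC with the appropriate moduli, check that the added quadratic inflates the average-smoothness constant only by a constant factor thanks to $\eps\le 4LD_x^2$, plug the resulting $\kappa_x',\kappa_y'$ into Corollary~\ref{cor:ASVRE}, expand the product under the square root, and use Lemma~\ref{lem:csc} to pass back to $f$. The only cosmetic differences are the precise constants you track ($\mu_x'=\eps/(2D_x^2)$ versus the paper's conservative $\eps/(4D_x^2)$, and $L'\le 3L$ via Minkowski versus the paper's $2L$ via $(a+b)^2\le 2a^2+2b^2$), none of which affect the $\tilde\fO$ conclusion.
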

Recently, \citet{yang2020catalyst} also studied Catalyst acceleration for strongly-convex-concave minimax optimization, but they only considered the special case of $\eps<\mu_y$ which leads to that their result does not match the lower bound.


\subsection{Convex-Concave Case}
In general convex-concave setting, we suppose $\fX$ and $\fY$ are bounded with diameters $D_x$ and $D_y$ respectively. Then unbalance of the problem comes from the difference between diameters $D_x$ and $D_y$.
We introduce the auxiliary SCSC function as follows:
\begin{align}\label{fun:cc}
\begin{split}
 f_{\eps, \vx_0, \vy_0} (\vx, \vy) 
\triangleq f(\vx, \vy) + \frac{\eps}{8 D_x^2} \norm{\vx - \vx_0}^2 - \frac{\eps}{8 D_y^2} \norm{\vy - \vy_0}^2.
\end{split}
\end{align}
Similarly, Lemma \ref{lem:cc} shows that any $\eps/2$-saddle-point of $f_{\eps, \vx_0, \vy_0}$ is an $\eps$-saddle-point of $f$. 
\begin{lem}\label{lem:cc}
Suppose that $f(\vx, \vy)$ is convex-concave, $\fX$ and $\fY$ are bounded with diameters $D_x$ and $D_y$ respectively. Consider the function
$f_{\eps, \vx_0, \vy_0} (\vx, \vy)$ defined as (\ref{fun:cc}).
Then for any $(\hat\vx, \hat\vy) \in \fX \times \fY$, we have
\begin{align*}
\max_{\vy \in \fY} f (\hat{\vx}, \vy) - \min_{\vx \in \fX} f (\vx, \hat{\vy}) 
\le \frac{\eps}{2} + \max_{\vy \in \fY} f_{\eps, \vx_0, \vy_0} (\hat{\vx}, \vy) - \min_{\vx \in \fX} f_{\eps, \vx_0, \vy_0} (\vx, \hat{\vy}).
\end{align*}
\end{lem}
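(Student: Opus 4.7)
The plan is to show each of the two terms $\max_{\vy} f(\hat\vx,\vy)$ and $\min_{\vx} f(\vx,\hat\vy)$ is controlled by the corresponding regularized quantity up to an error at most $\eps/4$, so that summing the two sides gives the claimed $\eps/2$ slack (in fact $\eps/4$, which is enough). The key observation is that since $\vx_0\in\fX$ has diameter $D_x$ and $\vy_0\in\fY$ has diameter $D_y$, the two quadratic correction terms satisfy uniform bounds
$0\le \frac{\eps}{8D_x^2}\|\vx-\vx_0\|^2 \le \frac{\eps}{8}$ for all $\vx\in\fX$ and similarly $0\le \frac{\eps}{8D_y^2}\|\vy-\vy_0\|^2\le \frac{\eps}{8}$ for all $\vy\in\fY$.

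First I would rewrite $f$ in terms of $f_{\eps,\vx_0,\vy_0}$ as
\begin{align*}
f(\vx,\vy) = f_{\eps,\vx_0,\vy_0}(\vx,\vy) - \frac{\eps}{8D_x^2}\|\vx-\vx_0\|^2 + \frac{\eps}{8D_y^2}\|\vy-\vy_0\|^2.
\end{align*}
Fixing $\vx=\hat\vx$ and taking $\max_{\vy\in\fY}$, I use the sub-additivity inequality $\max_\vy [A(\vy)+B(\vy)]\le \max_\vy A(\vy)+\max_\vy B(\vy)$ together with the uniform bound on the $\vy$-regularizer, and drop the nonpositive $\vx$-regularizer term, to obtain
\begin{align*}
\max_{\vy\in\fY} f(\hat\vx,\vy) \le \max_{\vy\in\fY} f_{\eps,\vx_0,\vy_0}(\hat\vx,\vy) + \frac{\eps}{8}.
\end{align*}
Symmetrically, fixing $\vy=\hat\vy$ and taking $\min_{\vx\in\fX}$, I apply $\min_\vx[A(\vx)+B(\vx)]\ge \min_\vx A(\vx)-\max_\vx(-B(\vx))$ with the uniform bound on the $\vx$-regularizer, getting
\begin{align*}
\min_{\vx\in\fX} f(\vx,\hat\vy) \ge \min_{\vx\in\fX} f_{\eps,\vx_0,\vy_0}(\vx,\hat\vy) - \frac{\eps}{8}.
\end{align*}

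Subtracting these two inequalities immediately yields
\begin{align*}
\max_{\vy\in\fY} f(\hat\vx,\vy) - \min_{\vx\in\fX} f(\vx,\hat\vy) \le \frac{\eps}{4} + \max_{\vy\in\fY} f_{\eps,\vx_0,\vy_0}(\hat\vx,\vy) - \min_{\vx\in\fX} f_{\eps,\vx_0,\vy_0}(\vx,\hat\vy),
\end{align*}
which is stronger than the claim since $\eps/4\le \eps/2$. There is essentially no obstacle here: the argument is a one-step direct computation that mirrors the proof of the analogous Lemma~\ref{lem:csc}, with the only new wrinkle being that we now regularize in both $\vx$ and $\vy$ and must account for each separately. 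The coefficient $1/8$ (rather than $1/4$ as in the strongly-convex-concave case) is precisely what allows us to split the budget $\eps$ across the two symmetric error contributions while keeping the stated $\eps/2$ bound.
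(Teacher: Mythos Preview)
Your proof is correct and follows essentially the same approach as the paper: both arguments bound the pointwise difference $f-f_{\eps,\vx_0,\vy_0}$ using the diameter constraints $\|\vx-\vx_0\|\le D_x$ and $\|\vy-\vy_0\|\le D_y$, then pass to the max/min. The only difference is that the paper uses the cruder uniform bound $|f-f_{\eps,\vx_0,\vy_0}|\le \eps/4$ (yielding the stated $\eps/2$ slack), whereas you track the signs of the two quadratic corrections separately and thereby obtain the slightly sharper $\eps/4$; this is a minor refinement of the same argument.
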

Combining Lemma~\ref{lem:cc} and Theorem~\ref{thm:asvre}, we obtain the convergence result of Corollary \ref{cor:cc}.
\begin{cor}\label{cor:cc}
Under settings of Lemma \ref{lem:cc}, for any $\eps \le 4 L \min\{D_x^2, D_y^2\}$, the total complexity of SFO calls for finding an $\eps/2$-saddle-point of $f_{\eps, \vx_0, \vy_0}$, which is also an $\eps$-saddle-point of $f$, is 
\begin{align*}
\mbox{$\displaystyle{
\tilde\fO\!\left(\!\left(n\!+\!\frac{\sqrt{n} L D_x D_y}{\eps}\!+\!n^{3/4} (D_x\!+\!D_y) \sqrt{\frac{L}{\eps}}\right)\log\left(\frac{1}{\eps}\right)\!\right)}$}.
\end{align*}
\end{cor}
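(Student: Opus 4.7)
}
The plan is to reduce the general convex-concave problem to an auxiliary strongly-convex-strongly-concave problem via the regularization in~(\ref{fun:cc}), and then apply the SCSC guarantee in Corollary~\ref{cor:ASVRE} with condition numbers expressed in terms of $L$, $D_x$, $D_y$, and $\eps$.

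First, by Lemma~\ref{lem:cc}, any $\eps/2$-saddle-point of $f_{\eps,\vx_0,\vy_0}$ is an $\eps$-saddle-point of $f$, so it suffices to bound the SFO cost of producing such a point for $f_{\eps,\vx_0,\vy_0}$. The regularized function is $(\mu_x,\mu_y)$-convex-concave with
\begin{align*}
\mu_x = \frac{\eps}{4 D_x^2}, \qquad \mu_y = \frac{\eps}{4 D_y^2}.
\end{align*}
Moreover, since each component is perturbed only by separable quadratics with curvatures $\mu_x$ and $\mu_y$, the finite-sum $\{f_i + \tfrac{\mu_x}{2}\norm{\vx-\vx_0}^2 - \tfrac{\mu_y}{2}\norm{\vy-\vy_0}^2\}_{i=1}^n$ remains $L'$-average-smooth with $L' = L + \max\{\mu_x,\mu_y\}$. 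The assumption $\eps \le 4L\min\{D_x^2,D_y^2\}$ gives $\mu_x,\mu_y \le L$, hence $L' \le 2L$.

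Next, invoke Corollary~\ref{cor:ASVRE} on $f_{\eps,\vx_0,\vy_0}$ (relabeling $\vx$ and $\vy$ if necessary so that $\mu_x \le \mu_y$, equivalently $D_x \ge D_y$), with target accuracy $\eps/2$. The condition numbers satisfy
\begin{align*}
\kappa_x = \frac{L'}{\mu_x} = \fO\!\left(\frac{L D_x^2}{\eps}\right), \qquad \kappa_y = \frac{L'}{\mu_y} = \fO\!\left(\frac{L D_y^2}{\eps}\right),
\end{align*}
so Corollary~\ref{cor:ASVRE} yields an SFO upper bound of order
\begin{align*}
\tilde\fO\!\left(\sqrt{n(\sqrt{n}+\kappa_x)(\sqrt{n}+\kappa_y)} \cdot \log(1/\eps)\right).
\end{align*}

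The remaining step is purely algebraic: expand the product and split into three dominant contributions. Writing $\sqrt{n(\sqrt{n}+\kappa_x)(\sqrt{n}+\kappa_y)} = \sqrt{n^2 + n^{3/2}(\kappa_x+\kappa_y) + n\kappa_x\kappa_y}$ and using $\sqrt{a+b+c} \le \sqrt{a}+\sqrt{b}+\sqrt{c}$ gives
\begin{align*}
\sqrt{n(\sqrt{n}+\kappa_x)(\sqrt{n}+\kappa_y)}
\;\le\; n + n^{3/4}\big(\sqrt{\kappa_x}+\sqrt{\kappa_y}\big) + \sqrt{n\,\kappa_x\kappa_y}.
\end{align*}
Substituting $\sqrt{\kappa_x} = \fO(D_x\sqrt{L/\eps})$, $\sqrt{\kappa_y} = \fO(D_y\sqrt{L/\eps})$, and $\sqrt{\kappa_x\kappa_y} = \fO(L D_x D_y/\eps)$ yields exactly
\begin{align*}
\tilde\fO\!\left(\left(n + \frac{\sqrt{n}\,L D_x D_y}{\eps} + n^{3/4}(D_x+D_y)\sqrt{\frac{L}{\eps}}\right)\log\!\left(\frac{1}{\eps}\right)\right),
\end{align*}
which is the claimed complexity.

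The only subtle points are (i) verifying that the added quadratic regularization preserves $L$-average-smoothness up to a constant (immediate from the triangle inequality in $L^2$), and (ii) checking that the $\eps$-dependence in the logarithmic factors from Corollary~\ref{cor:ASVRE} (which depend on $\Delta_f$, $\kappa_x$, $\kappa_y$, $n$) is indeed swept into $\log(1/\eps)$ under the $\tilde\fO(\cdot)$ notation. Neither is a real obstacle; the main content is the change of variables reducing the convex-concave setting to the SCSC bound already proved.
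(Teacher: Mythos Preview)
Your proposal is correct and follows essentially the same route as the paper: verify that the regularized function $f_{\eps,\vx_0,\vy_0}$ is $\Theta(\eps/D_x^2,\eps/D_y^2)$-convex-concave and $\fO(L)$-average-smooth (using $\eps \le 4L\min\{D_x^2,D_y^2\}$), then plug the resulting condition numbers into Corollary~\ref{cor:ASVRE} and simplify. Your only minor slip is the intermediate equality $L' = L + \max\{\mu_x,\mu_y\}$ (the triangle inequality on squared norms gives $L' \le \sqrt{2L^2 + 2\max\{\mu_x,\mu_y\}^2}$, not a straight sum), but the conclusion $L' \le 2L$ still holds, so the argument goes through unchanged.
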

The result of Corollary~\ref{cor:cc} nearly matches the lower bound w.r.t.\ $\eps$, $L$, $n$, $D_x$ and $D_y$ simultaneously~\cite{han2021lower}. Note that the best known  upper bound in convex-concave setting~\cite{alacaoglu2021stochastic} is optimal to $\eps$, $L$ and $n$, but it regards the diameters as constants and does not consider the potential unbalance arose from $D_x$ and $D_y$.

\section{Experiments}

We conduct the experiments on AUC maximization~\cite{hanley1982meaning,ying2016stochastic,shen2018towards} and  wireless communication~\cite{garnaev2009eavesdropping,boyd2004convex,yang2020catalyst} problems. 
We evaluate the performance of AL-SVRE and compare it with baseline algorithms ExtraGradient (EG)~\cite{korpelevich1977extragradient,gidel2019variational} and L-SVRE~\cite{alacaoglu2021stochastic}. 
We summarize the datasets  in Table~\ref{table:dataset}.
The empirical results in Figure~\ref{figure:experiment} show that our proposed AL-SVRE performs better than the baselines.

\subsection{AUC Maximization}

AUC maximization~\cite{hanley1982meaning} considers finding the classifier $\vtheta\in\BR^d$ on training set $\{(\va_i, b_i)\}_{i=1}^n$, where $\va_i\in\BR^d$ and $b_i\in\{+1,-1\}$. We denote $n^+$ be the numbers of positive instances and let $\hat p=n^+/n$. Our experiments focus on the unbalanced SCSC minimax formulation for AUC maximization~\cite{shen2018towards,ying2016stochastic} as follows
\begin{align}\label{prob:AUC}
    \min_{\vx\in\BR^{d+2}}\max_{y\in\BR} f(\vx,y) \triangleq \frac{1}{n}\sum_{i=1}^n f_i(\vx,y;\va_i,b_i,\lambda),
\end{align}
where $\vx=[\vtheta;u;v]\in\BR^{d+2}$, $\lambda>0$ is the regularization parameter and each component is defined as
\begin{align*}
f_i(\vx,\vy;\va_i,b_i,\lambda)
= & \frac{\lambda}{2}\|\vx\|_2^2-\hat p(1-\hat p)y^2\\
&+\hat p((\vtheta^\top\va_i-v)^2+2(1+y)\vtheta^\top\va_i)\mathbb{I}_{b_i=-1}\\
& +(1-\hat p)((\vtheta^T\va_i-u)^2-2(1+y)\vtheta^\top\va_i)\mathbb{I}_{b_i=1}.
\end{align*}
We set $\lambda=10^{-10}$ and evaluate all algorithms on datasets ``a9a'' and ``w8a'' \cite{CC01a}. We tune stepsize of EG and L-SVRE (or as sub-problem solver) from $\{0.02, 0.05, 0.1, 0.2, 0.5\}$.
For L-SVRE, we let $p=1/2n$ by the setting of Theorem~\ref{algo:svre}.
For AL-SVRE, we let $\beta=0.01$, $q = \lambda_x/(\lambda + \beta)$; $T_k=\lceil 0.3n \rceil$ for ``a9a'' and $T_k=\lceil 0.5n \rceil$ for ``w8a''. We present the results of epochs against $\log\norm{\nabla f(\vx,\vy)}$ in Figure~\ref{figure:experiment} (a) and (b).

\subsection{The Wireless Communication Problems}

We consider the wireless communication problem~\cite{yang2020catalyst,boyd2004convex} as follows
\begin{align}\label{prob:wireless}
    \min_{\vx\in\fX}\max_{\vy\in\fY} f(\vx, \vy) \triangleq -\frac{1}{n}\sum_{i=1}^n \log\left(1+\frac{b_ix_i}{a_i+y_i}\right),
\end{align}
where constrained sets are $\fX=\{\vx: \norm{\vx} \leq R, x_i\geq 0\}$ and $\fY=\{\vy: {\bf 1}^\top\vy=n, y_i\geq 0\}$. We can verify the objective function of problem (\ref{prob:wireless}) is convex-concave but possibly be neither strongly-convex nor strongly-concave.

We generate two datasets with
(i) $n = 500$, $R = 1$, $\vb = \vone$, $\va \in \BR^{500}$ uniformly from $[0, 10]^{500}$ and (ii) $n = 1000$, $R = 1$, $\vb = \vone$, $\va \in \BR^{1000}$ uniformly from $[0, 50]^{1000}$. 
We tune the stepsize for all the algorithms from $[0.01, 0.1, 1.0]$ and set $p=2/n$ like AUC maximization. For AL-SVRE, we use the auxiliary function as follows
\begin{align*}
 \hat{f}(\vx, \vy) 
\triangleq  f(\vx, \vy) + \frac{\eps}{8 R^2} \norm{\vx}^2 - \frac{\eps}{8 n^2} \norm{\vy - \vone}^2  
\end{align*}
where $\eps=10^{-6}/n$. We also set $T_k=500$, $q=R^2/n^2$ and $\beta = \eps/(4 R^2) - \eps/(4 n^2)$.
Since the problem (\ref{prob:wireless}) is constrained, we use evaluation the performance by epochs against the logarithm of the magnitude of gradient mapping, which is defined as 
\begin{align*}
\mbox{\small
$\displaystyle{\frac{\norm{\vx-\fP_{\fX} (\vx\!-\!\hat\tau \nabla_{\vx} f(\vx, \vy))} + \norm{\vy-\fP_{\fY} (\vy\!+\!\hat\tau \nabla_{\vy} f(\vx, \vy))}}{\hat\tau}}$}
\end{align*}
where $\hat\tau$ is set to be $0.1$. We present the empirical results in Figure~\ref{figure:experiment} (c) and (d).

\begin{table}
\caption{Summary of datasets}\label{table:dataset}
\centering\renewcommand{\arraystretch}{1.05}\vskip0.1cm
\begin{tabular}{|c|c|c|c|c|}
\hline
   &  a9a & w8a & wireless-500 & wireless-1000  \\\hline
  $n$  &  32,561  & 45,546 & 500 & 1,000 \\\hline
  $d$  &  ~~~~~123 & ~~~~~300 & 500 & 1,000 \\\hline
\end{tabular}
\end{table}

\section{Conclusions}

In this paper we have studied unbalanced convex-concave minimax problems with finite-sum structure. We have shown the optimality of L-SVRE for balanced SCSC minimax and proposed a near optimal algorithm AL-SVRE for unbalanced problems. AL-SVRE only contains two loops of iterations, making its implementation  more simple and practical than the existing methods for unbalanced SCSC minimax. We have also extended our algorithm to more general convex-concave minimax problems and showed the  near optimality.

It would be interesting to apply our ideas to solve specific convex-concave minimax with refined smoothness assumptions and specific bilinear settings~\cite{wang2020improved,xie2021dippa,ouyang2018lower}. 
It is also possible to design accelerated variance reduced algorithms to solve unbalanced minimax problems in online setting.

\bibliographystyle{plainnat}
\bibliography{reference}

\newpage
\onecolumn 
\appendix
 
\section*{Supplemental Materials}

The supplemental materials are organized as follows. Section~\ref{appendix:lemma} presents several lemmas for further theoretical analysis.
Section~\ref{appendix:LSVRE} provides the proof of Theorem \ref{thm:svre}, which is the convergence result of L-SVRE for balanced SCSC setting.
Section~\ref{app:ASVRE} gives the detailed analysis for AL-SVRE, including the proof of our main results Theorem \ref{thm:asvre}, Corollary~\ref{cor:ASVRE} and other lemmas.
Section~\ref{appendix:lower} provide the proof of Theorem \ref{thm:balance-lower-bound}, which implies L-SVRE is an optimal SFO algorithm for balanced finite-sum SCSC minimax. Section~\ref{appendix:csc} provides the detailed proofs for the results in Section~\ref{sec:extensions}. 


 \section{Technical Lemmas}\label{appendix:lemma}
We first present some useful tools for the analysis of constrained optimization.

\begin{lem}[{\citet[Theorem 2.2.9 and 2.2.12]{nesterov2018lectures}}]\label{lem:proj-optimal}
Let $\vz^*=\argmin_{\vz\in\fC}Q(\vz)$, where $Q$ is smooth and strongly-convex; $\fC$ is convex and compact. Then, for any $\vz\in\fC$ and $\eta>0$, we have 
\begin{align*}
\inner{\nabla Q(\vz^*)}{\vz - \vz^*} \ge 0 \quad \text{and} \quad
\vz^* = \fP_{\fC} \left( \vz^* - \eta \nabla Q(\vz^*) \right).
\end{align*}
\end{lem}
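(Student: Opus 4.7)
The plan is to establish the two claims sequentially; the second will follow almost directly from the first via the variational characterization of projection.

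For the first inequality $\inner{\nabla Q(\vz^*)}{\vz - \vz^*} \ge 0$, I would invoke the standard first-order optimality condition for constrained convex optimization. Since $\fC$ is convex, for any $\vz \in \fC$ and $t \in (0,1]$ the segment point $\vz^* + t(\vz - \vz^*)$ lies in $\fC$, and by the optimality of $\vz^*$ we have $Q(\vz^* + t(\vz - \vz^*)) \ge Q(\vz^*)$. Dividing by $t$ and letting $t \to 0^+$, the differentiability of $Q$ yields the one-sided directional derivative inequality $\inner{\nabla Q(\vz^*)}{\vz - \vz^*} \ge 0$.

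For the second equality, I would recognize $\fP_\fC(\vz^* - \eta \nabla Q(\vz^*))$ as the unique minimizer over $\fC$ of the strongly convex quadratic $\Phi(\vu) = \tfrac{1}{2}\norm{\vu - \vz^* + \eta \nabla Q(\vz^*)}^2$. Applying the first-order condition just proved (now with $\Phi$ in place of $Q$), the minimizer $\vu^\star$ of $\Phi$ over $\fC$ is characterized by $\inner{\vu^\star - \vz^* + \eta \nabla Q(\vz^*)}{\vz - \vu^\star} \ge 0$ for all $\vz \in \fC$. Substituting the candidate $\vu^\star = \vz^*$ reduces this to $\eta \inner{\nabla Q(\vz^*)}{\vz - \vz^*} \ge 0$, which is exactly the first claim up to the positive factor $\eta$. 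Hence $\vz^*$ verifies the variational inequality characterizing the projection, and by uniqueness of projection onto a convex compact set we obtain $\vz^* = \fP_\fC(\vz^* - \eta \nabla Q(\vz^*))$.

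There is no genuine technical obstacle in this argument; it is a direct textbook consequence of the variational inequality perspective on constrained convex optimization, which is why the statement is cited as Theorems 2.2.9 and 2.2.12 of Nesterov's book. The only care required is consistent bookkeeping to ensure that the optimality condition is applied to the correct objective at each step, and that the positive constant $\eta > 0$ can be divided out without affecting the direction of the inequality.
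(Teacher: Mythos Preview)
Your proposal is correct and follows the standard textbook route. The paper itself does not supply a proof of this lemma at all; it simply cites Theorems~2.2.9 and~2.2.12 of Nesterov's book and moves on. Your argument---deriving the variational inequality from the directional-derivative limit, then verifying that $\vz^*$ satisfies the projection's characterizing variational inequality---is exactly the argument behind those cited theorems. One minor point worth tightening: in the second part you implicitly use the \emph{sufficiency} of the first-order condition (not just the necessity you derived in the first part) to conclude that $\vz^*$ is the projection; this follows immediately from the convexity of $\Phi$ via $\Phi(\vz)\ge\Phi(\vz^*)+\inner{\nabla\Phi(\vz^*)}{\vz-\vz^*}\ge\Phi(\vz^*)$, but it is cleaner to state it explicitly rather than bundling it into ``uniqueness of projection.''
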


\begin{lem}[{\citet[Corollary 2.2.3]{nesterov2018lectures}}]\label{lem:non-expansive}
Given a convex and compact set $\fC\subseteq\BR^{d}$; and any $\vu,\vv\in\BR^{d}$, we have $\norm{\fP_\fC(\vu)-\fP_\fC(\vv)}\leq\norm{\vu-\vv}$.
\end{lem}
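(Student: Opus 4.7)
The plan is to leverage the variational characterization of the Euclidean projection onto a convex set together with Cauchy--Schwarz. This is the classical argument for firm non-expansiveness, from which the stated non-expansiveness follows immediately, so I will essentially reconstruct that two-line derivation in the language of the paper.

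First, I would recall the first-order optimality condition for the projection. For any $\vz \in \BR^d$ and any $\vw \in \fC$, the point $\fP_\fC(\vz)$ minimizes the $1$-strongly convex quadratic $Q(\vu') = \tfrac{1}{2}\norm{\vu' - \vz}^2$ over $\fC$, so the first inequality in Lemma~\ref{lem:proj-optimal} applied to $Q$ yields
\[
\inner{\fP_\fC(\vz) - \vz}{\vw - \fP_\fC(\vz)} \ge 0.
\]

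Next, I would apply this inequality twice: once with $\vz = \vu$ and $\vw = \fP_\fC(\vv)$, and once with $\vz = \vv$ and $\vw = \fP_\fC(\vu)$. Adding the two resulting inequalities and rearranging so that the cross terms combine gives
\[
\inner{\vu - \vv}{\fP_\fC(\vu) - \fP_\fC(\vv)} \ge \norm{\fP_\fC(\vu) - \fP_\fC(\vv)}^2.
\]

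Finally, I would apply the Cauchy--Schwarz inequality to the left-hand side and divide both sides by $\norm{\fP_\fC(\vu) - \fP_\fC(\vv)}$; the conclusion is trivial in the degenerate case when this quantity vanishes. This produces the desired bound $\norm{\fP_\fC(\vu) - \fP_\fC(\vv)} \le \norm{\vu - \vv}$. There is no real obstacle: the entire argument is a few lines once the optimality condition is in hand, and the only points requiring mild care are orienting the obtuse-angle inequality in the correct direction and separating off the case of coincident projections before dividing.
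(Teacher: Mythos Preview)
Your proof is correct and is the standard classical argument for non-expansiveness of the Euclidean projection. Note that the paper does not actually supply a proof for this lemma: it is stated with a citation to \citet[Corollary~2.2.3]{nesterov2018lectures} and used as a black box, so there is no paper proof to compare against. Your derivation via the obtuse-angle inequality (i.e., Lemma~\ref{lem:proj-optimal} applied to the quadratic defining the projection, equivalently the content of Lemma~\ref{lemma:proj-inner}) followed by Cauchy--Schwarz is exactly the argument in Nesterov's text, so nothing is missing.
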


\begin{lem}\label{lemma:proj-inner}
Given a convex and compact set $\fC\subseteq\BR^{d}$, for any $\vu\in\BR^d$ and $\vv\in\fC$, we have \[\inner{\fP_\fC(\vu)-\vu}{\fP_\fC(\vu)-\vv}\leq 0.\]
\end{lem}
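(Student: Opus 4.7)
The plan is to derive this from the first-order optimality condition for the convex quadratic that defines the projection. By definition, $\fP_\fC(\vu) = \argmin_{\vw \in \fC} Q(\vw)$ where $Q(\vw) = \tfrac{1}{2}\norm{\vw - \vu}^2$, which is smooth and strongly convex, so Lemma \ref{lem:proj-optimal} applies directly.

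Concretely, I would compute $\nabla Q(\vw) = \vw - \vu$, so at the minimizer $\vw^{*} = \fP_\fC(\vu)$ we get $\nabla Q(\vw^{*}) = \fP_\fC(\vu) - \vu$. Lemma \ref{lem:proj-optimal} then gives $\inner{\nabla Q(\vw^{*})}{\vw - \vw^{*}} \geq 0$ for every $\vw \in \fC$. Specializing to $\vw = \vv$ yields
\[
\inner{\fP_\fC(\vu) - \vu}{\vv - \fP_\fC(\vu)} \geq 0,
\]
and flipping the sign of the second argument of the inner product is exactly the claim.

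The main obstacle is essentially nonexistent here: this is the standard variational characterization of projection onto a closed convex set (the obtuse-angle inequality), and every ingredient is already packaged in the previously stated Lemma \ref{lem:proj-optimal}. The only small point to be careful about is that Lemma \ref{lem:proj-optimal} is phrased for a generic smooth strongly convex $Q$ on a convex compact set, so I would briefly remark that $Q(\vw) = \tfrac{1}{2}\norm{\vw - \vu}^2$ meets the hypotheses (it is $1$-strongly convex and $C^{\infty}$ on $\BR^{d}$, hence on $\fC$), which legitimizes invoking the lemma in this form.
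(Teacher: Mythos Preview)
Your proposal is correct and matches the paper's own proof essentially line for line: define $Q(\vw)=\tfrac{1}{2}\norm{\vw-\vu}^2$, apply Lemma~\ref{lem:proj-optimal} at the minimizer $\fP_\fC(\vu)$, and flip the sign of the second argument. There is nothing to add.
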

\begin{proof}
Let $Q(\vz)=\frac{1}{2}\norm{\vz-\vu}^2$, then $\fP_\fC(\vu)=\argmin_{\vz\in\fC} Q(\vz)$. Using Lemma~\ref{lem:proj-optimal}, we have
\begin{align*}
\inner{\fP_\fC(\vu)-\vu}{\vv-\fP_\fC(\vu)}
=\inner{\nabla Q(\fP_\fC(\vu))}{\vv-\fP_\fC(\vu)} 
\geq 0.
\end{align*}
\end{proof}

Then we provides some properties for convex-concave functions.
\begin{lem}[{\citet[Lemma B.2]{lin2020near}}]\label{lem:phi-psi}
Assume that $f(\vx, \vy): \fX \times \fY \to \BR$ is $L$-smooth and $(\mu_x, \mu_y)$-convex-concave. 
We define
\begin{align*}
     \vy_f^*(\cdot) &\triangleq \argmax_{\vy \in \fY} f(\cdot, \vy), ~
       \Phi_f(\cdot) \triangleq \max_{\vy \in \fY} f(\cdot, \vy), \\
  \vx_f^*(\cdot) &\triangleq \argmin_{\vx \in \fX} f(\vx, \cdot), ~~
  \Psi_f(\cdot) \triangleq \min_{\vx \in \fX} f(\vx, \cdot).
\end{align*}
Then, there holds that
\begin{enumerate}
    \item[\emph{(a)}] the function $\vy_f^*(\cdot)$ is $\kappa_y$-Lipschitz, 
    \item[\emph{(b)}] the function $\Phi_f(\cdot)$ is $2\kappa_yL$-smooth and $\mu_x$-strongly convex with $\nabla \Phi_f(\cdot) = \nabla_{\vx} f(\cdot, \vy_f^*(\cdot))$,
    \item[\emph{(c)}] the function $\vx_f^*(\cdot)$ is $\kappa_x$-Lipschitz,
    \item[\emph{(d)}] the function $\Psi_f(\cdot)$ is $2\kappa_xL$-smooth and $\mu_y$-strongly concave with $\nabla \Psi_f(\cdot) = \nabla_{\vx} f(\vx_f^*(\cdot), \cdot)$.
\end{enumerate}
\end{lem}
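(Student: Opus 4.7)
The plan is to prove parts (a) and (b) directly; parts (c) and (d) then follow by applying the same arguments to $-f$ with the roles of $\vx$ and $\vy$ (and of $\mu_x, \mu_y$) swapped. Throughout I use only $L$-smoothness of $f$, strong convexity/concavity of the appropriate marginal, and first-order optimality on $\fX, \fY$.

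For part (a), I would fix $\vx_1, \vx_2 \in \fX$ and set $\vy_i = \vy_f^*(\vx_i)$. Since each $\vy_i$ maximizes the concave function $f(\vx_i, \cdot)$ over $\fY$, the first-order optimality condition gives $\inner{\nabla_\vy f(\vx_i, \vy_i)}{\vy - \vy_i} \leq 0$ for every $\vy \in \fY$. Taking $\vy = \vy_j$ for $j \neq i$ and summing over $i = 1, 2$ yields
\[
\inner{\nabla_\vy f(\vx_1, \vy_1) - \nabla_\vy f(\vx_2, \vy_2)}{\vy_2 - \vy_1} \leq 0.
\]
I would then insert and subtract $\nabla_\vy f(\vx_1, \vy_2)$. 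The term $\inner{\nabla_\vy f(\vx_1, \vy_1) - \nabla_\vy f(\vx_1, \vy_2)}{\vy_2 - \vy_1}$ is at least $\mu_y \norm{\vy_1 - \vy_2}^2$ by $\mu_y$-strong concavity of $f(\vx_1, \cdot)$, while the cross term $\inner{\nabla_\vy f(\vx_1, \vy_2) - \nabla_\vy f(\vx_2, \vy_2)}{\vy_2 - \vy_1}$ is bounded in absolute value by $L\norm{\vx_1 - \vx_2}\norm{\vy_1 - \vy_2}$ via $L$-smoothness. Dividing by $\norm{\vy_1 - \vy_2}$ produces the $\kappa_y$-Lipschitz bound.

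For part (b), the gradient identity $\nabla \Phi_f(\vx) = \nabla_\vx f(\vx, \vy_f^*(\vx))$ is the Danskin/envelope theorem: strong concavity of $f(\vx, \cdot)$ makes $\vy_f^*(\vx)$ a singleton, and $f$ is $C^1$, so the value function is differentiable with the stated derivative. Smoothness of $\Phi_f$ then follows from the $L$-Lipschitzness of $\nabla_\vx f$ combined with part (a):
\[
\norm{\nabla \Phi_f(\vx_1) - \nabla \Phi_f(\vx_2)} \leq L\sqrt{\norm{\vx_1 - \vx_2}^2 + \norm{\vy_f^*(\vx_1) - \vy_f^*(\vx_2)}^2} \leq L\sqrt{1 + \kappa_y^2}\,\norm{\vx_1 - \vx_2},
\]
which is bounded by $2\kappa_y L\norm{\vx_1 - \vx_2}$ because $\kappa_y \geq 1$. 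For $\mu_x$-strong convexity of $\Phi_f$, I would argue directly: for any $\vx_1, \vx_2 \in \fX$, since $\vy_f^*(\vx_1) \in \fY$ and $f(\cdot, \vy_f^*(\vx_1))$ is $\mu_x$-strongly convex,
\[
\Phi_f(\vx_2) \geq f(\vx_2, \vy_f^*(\vx_1)) \geq f(\vx_1, \vy_f^*(\vx_1)) + \inner{\nabla_\vx f(\vx_1, \vy_f^*(\vx_1))}{\vx_2 - \vx_1} + \frac{\mu_x}{2}\norm{\vx_2 - \vx_1}^2,
\]
and the right-hand side equals $\Phi_f(\vx_1) + \inner{\nabla \Phi_f(\vx_1)}{\vx_2 - \vx_1} + \frac{\mu_x}{2}\norm{\vx_2 - \vx_1}^2$ by the gradient identity.

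The one step that needs a little care, and which I view as the mildest obstacle, is invoking Danskin's theorem on the constrained set $\fY$; but since $\vy_f^*$ is a singleton (by strong concavity) and $f$ is continuously differentiable, the standard proof applies verbatim. Everything else reduces to combining first-order optimality, strong convexity/concavity, and Lipschitz continuity of $\nabla f$, so parts (c) and (d) are immediate by symmetry.
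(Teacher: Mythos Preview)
Your argument is correct and follows the standard route for these properties: first-order optimality plus strong concavity to get the Lipschitz bound on $\vy_f^*$, Danskin's theorem for the gradient identity, and then the chain of inequalities $L\sqrt{1+\kappa_y^2}\leq 2\kappa_y L$ for smoothness of $\Phi_f$. The strong convexity step and the symmetry reduction for (c), (d) are also fine.

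Note, however, that the paper does not actually supply a proof of this lemma at all; it is simply quoted from \citet[Lemma B.2]{lin2020near} and used as a black box. So there is no ``paper's own proof'' to compare against here. Your writeup is essentially the argument one finds in that reference (and is the natural one), so nothing is lost and nothing is genuinely different.
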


\begin{lem}\label{lem:monotone}
Assume that $f(\vx, \vy): \fX \times \fY \to \BR$ is differentiable and $(\mu_x, \mu_y)$-convex-concave, then for any $\vz_1=(\vx_1,\vy_1), \vz_2=(\vx_2,\vy_2) \in \fX \times \fY$, we have
\begin{align}
    \inner{\vg(\vz_1) - \vg(\vz_2)}{\vz_1 - \vz_2} \ge \mu_x \norm{\vx_1 - \vx_2}^2 + \mu_y \norm{\vy_1 - \vy_2}^2.
\end{align}
where $g(\vx,\vy)=(\nabla_\vx f(\vx,\vy), -\nabla_\vy f(\vx,\vy))$.
\end{lem}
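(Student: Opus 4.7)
The plan is to derive the inequality by summing four applications of the strong convexity/concavity defining inequalities at the four corners of the rectangle $\{\vx_1,\vx_2\}\times\{\vy_1,\vy_2\}$, chosen so that the function values telescope to zero.

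First, I would expand
\begin{align*}
\inner{\vg(\vz_1) - \vg(\vz_2)}{\vz_1 - \vz_2}
= \inner{\nabla_\vx f(\vz_1) - \nabla_\vx f(\vz_2)}{\vx_1 - \vx_2}
- \inner{\nabla_\vy f(\vz_1) - \nabla_\vy f(\vz_2)}{\vy_1 - \vy_2},
\end{align*}
so the target is the sum of these two inner products lower-bounded by $\mu_x\|\vx_1-\vx_2\|^2+\mu_y\|\vy_1-\vy_2\|^2$.

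Next, I would write down the four first-order inequalities obtained by holding one variable fixed and exploiting strong convexity in $\vx$ or strong concavity in $\vy$: two from $\mu_x$-strong convexity of $f(\cdot,\vy_1)$ at $\vx_1$ and of $f(\cdot,\vy_2)$ at $\vx_2$ (oriented to lower-bound $f(\vx_2,\vy_1)-f(\vx_1,\vy_1)$ and $f(\vx_1,\vy_2)-f(\vx_2,\vy_2)$ respectively), and two from $\mu_y$-strong concavity of $f(\vx_1,\cdot)$ at $\vy_1$ and of $f(\vx_2,\cdot)$ at $\vy_2$ (oriented to lower-bound $f(\vx_1,\vy_1)-f(\vx_1,\vy_2)$ and $f(\vx_2,\vy_2)-f(\vx_2,\vy_1)$ respectively). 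Each right-hand side contains a linear term in the corresponding gradient plus a quadratic term of the form $\tfrac{\mu_x}{2}\|\vx_1-\vx_2\|^2$ or $\tfrac{\mu_y}{2}\|\vy_1-\vy_2\|^2$.

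Summing these four inequalities, the function values on the left-hand side telescope exactly to $0$ because each of $f(\vx_1,\vy_1), f(\vx_1,\vy_2), f(\vx_2,\vy_1), f(\vx_2,\vy_2)$ appears once with a $+$ sign and once with a $-$ sign. On the right-hand side, the four linear terms combine into $-\inner{\nabla_\vx f(\vz_1)-\nabla_\vx f(\vz_2)}{\vx_1-\vx_2}+\inner{\nabla_\vy f(\vz_1)-\nabla_\vy f(\vz_2)}{\vy_1-\vy_2}$, while the quadratic terms sum to $\mu_x\|\vx_1-\vx_2\|^2+\mu_y\|\vy_1-\vy_2\|^2$. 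Rearranging gives the desired bound.

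I do not expect any serious obstacle; the only bookkeeping point is choosing the correct orientation of each inequality so that the six function-value terms cancel in pairs, which is the standard ``four-corner'' trick for showing that the saddle-point gradient operator of a convex-concave function is monotone (strongly monotone under strong convexity/concavity).
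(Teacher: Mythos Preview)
Your proposal is correct and follows essentially the same approach as the paper: the paper writes down exactly the same four strong-convexity/strong-concavity inequalities at the corners $(\vx_1,\vy_1),(\vx_2,\vy_2),(\vx_1,\vy_1),(\vx_2,\vy_2)$ and sums them so that the function values cancel.
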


\begin{proof}
According to $f(\cdot, \vy)$ is $\mu_x$-strongly-convex, we have
\begin{align*}
    f(\vx_2, \vy_1) - f(\vx_1, \vy_1) &\ge \inner{\nabla_{\vx} f(\vx_1, \vy_1)}{\vx_2 - \vx_1} + \frac{\mu_x}{2} \norm{\vx_2 - \vx_1}, \\
    f(\vx_1, \vy_2) - f(\vx_2, \vy_2) &\ge \inner{\nabla_{\vx} f(\vx_2, \vy_2)}{\vx_1 - \vx_2} + \frac{\mu_x}{2} \norm{\vx_2 - \vx_1}. 
\end{align*}
Similarly, by strongly-convexity of $-f(\vx, \cdot)$, we have
\begin{align*}
    -f(\vx_1, \vy_2) + f(\vx_1, \vy_1) &\ge -\inner{\nabla_{\vy} f(\vx_1, \vy_1)}{\vy_2 - \vy_1} + \frac{\mu_y}{2} \norm{\vy_2 - \vy_1}, \\
    -f(\vx_2, \vy_1) + f(\vx_2, \vy_2) &\ge -\inner{\nabla_{\vy} f(\vx_2, \vy_2)}{\vy_1 - \vy_2} + \frac{\mu_y}{2} \norm{\vy_2 - \vy_1}. 
\end{align*}
The desired result just follows from adding above four inequalities together. 
\end{proof}

The following two lemmas show that the relationship between the smoothness of each component $f_i$, the average-smoothness of $\{f_i\}_{i=1}^n$ and the smoothness of $\frac{1}{n} \sum_{i=1}^n f_i$.

\begin{lem}\label{lem:smooth-average-smooth}
Assume that $f_i$ is $L_i$-smooth, then $\{f_i\}_{i=1}^n$ is $\sqrt{\frac{1}{n} \sum_{i=1}^n L_i^2}$-average-smooth.
\end{lem}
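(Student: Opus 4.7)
The plan is to derive the average-smoothness bound directly from the individual smoothness inequalities, essentially by squaring and averaging. This is a one-step reduction, so I will keep it concise.

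First, I would fix arbitrary $\vz, \vz' \in \fZ$ and apply the $L_i$-smoothness of each component to obtain
\begin{align*}
\norm{\nabla f_i(\vz) - \nabla f_i(\vz')} \le L_i \norm{\vz - \vz'} \quad \text{for every } i \in \{1,\dots,n\}.
\end{align*}
Squaring both sides yields $\norm{\nabla f_i(\vz) - \nabla f_i(\vz')}^2 \le L_i^2 \norm{\vz - \vz'}^2$.

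Next, I would average these $n$ squared inequalities over $i$, giving
\begin{align*}
\frac{1}{n} \sum_{i=1}^n \norm{\nabla f_i(\vz) - \nabla f_i(\vz')}^2 \le \left(\frac{1}{n}\sum_{i=1}^n L_i^2\right) \norm{\vz - \vz'}^2 = L^2 \norm{\vz - \vz'}^2,
\end{align*}
where $L \triangleq \sqrt{\frac{1}{n}\sum_{i=1}^n L_i^2}$. Since $\vz, \vz'$ were arbitrary, this matches the definition of $L$-average smoothness, which closes the argument.

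There is no real obstacle here; the lemma is essentially a restatement that the $\ell_2$-average of the individual Lipschitz constants controls the mean-squared deviation of the gradients. The only subtle observation worth noting is that this bound is in general tighter than simply taking the maximum of the $L_i$'s, and coincides with it precisely when all $L_i$ are equal, which is why this is the ``right'' parameter to compare against in the discussion following \citet{vladislav2021accelerated}'s assumption.
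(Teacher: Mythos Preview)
Your proof is correct and matches the paper's own argument essentially line for line: both simply square the individual $L_i$-smoothness inequalities and average over $i$. There is nothing to add.
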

\begin{proof}
    By the definition of smoothness, we have
    \begin{align*}
        \frac{1}{n} \sum_{i=1}^n \norm{f_i(\vz_1) - f_i(\vz_2)}^2 \le \frac{1}{n} \sum_{i=1}^n L_i^2 \norm{\vz_1 - \vz_2}^2,
    \end{align*}
    which is our desired result. 
\end{proof}

\begin{lem}\label{lem:average-smooth}
    Assume that $\{f_i\}_{i=1}^n$ is $L$-average-smooth, then $f(\vz) = \frac{1}{n} f_i(\vz)$ is $L$-smooth.
\end{lem}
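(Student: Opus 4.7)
The plan is to directly compute $\|\nabla f(\vz_1)-\nabla f(\vz_2)\|$ and reduce it to the average-smoothness bound via Jensen's inequality (equivalently, Cauchy--Schwarz on the uniform measure over the $n$ components).

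First I would write $\nabla f(\vz)=\frac{1}{n}\sum_{i=1}^{n}\nabla f_i(\vz)$, so that
\begin{align*}
\norm{\nabla f(\vz_1)-\nabla f(\vz_2)}
= \left\| \frac{1}{n}\sum_{i=1}^{n}\big(\nabla f_i(\vz_1)-\nabla f_i(\vz_2)\big)\right\|_2 .
\end{align*}
Next I would apply the triangle inequality to pull the sum outside the norm, and then Cauchy--Schwarz (equivalently, the power-mean / Jensen inequality $\big(\tfrac{1}{n}\sum a_i\big)^2\le \tfrac{1}{n}\sum a_i^2$ applied to $a_i=\norm{\nabla f_i(\vz_1)-\nabla f_i(\vz_2)}$) to obtain
\begin{align*}
\norm{\nabla f(\vz_1)-\nabla f(\vz_2)}^2
\le \frac{1}{n}\sum_{i=1}^{n}\norm{\nabla f_i(\vz_1)-\nabla f_i(\vz_2)}^2 .
\end{align*}
Finally, invoking the $L$-average-smoothness hypothesis bounds the right-hand side by $L^2\norm{\vz_1-\vz_2}^2$, and taking square roots yields the $L$-smoothness of $f$.

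There is essentially no obstacle: the lemma is a one-line consequence of Jensen's inequality and the definition, and the only subtlety worth flagging is using the quadratic (mean-square) form of Jensen/Cauchy--Schwarz rather than the naive triangle inequality, since the latter would give only a bound of the form $\frac{1}{n}\sum_i L_i$-type and would not directly match the average-smoothness constant $L$.
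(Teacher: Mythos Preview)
Your proof is correct and essentially identical to the paper's: both use the triangle inequality followed by the AM--QM (equivalently Jensen/Cauchy--Schwarz) inequality to pass from $\norm{\nabla f(\vz_1)-\nabla f(\vz_2)}$ to the mean-square quantity appearing in the average-smoothness definition, and then invoke that definition. The only cosmetic difference is that the paper works with the norm throughout and applies AM--QM, whereas you square first and then take a root at the end.
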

\begin{proof}
    Note that
    \begin{align*}
        \norm{\nabla f(\vz_1) - \nabla f(\vz_2)} &= \norm{\frac{1}{n} \sum_{i=1}^n (\nabla f_i(\vz_1) - \nabla f_i(\vz_2))} \\
        &\le \frac{1}{n} \sum_{i=1}^n \norm{\nabla f_i(\vz_1) - \nabla f_i(\vz_2)} \\
        &\le \sqrt{\frac{1}{n} \sum_{i=1}^n \norm{\nabla f_i(\vz_1) - \nabla f_i(\vz_2)}^2} \\
        &\le \sqrt{L^2 \norm{\vz_1 - \vz_2}^2} = L \norm{\vz_1 - \vz_2},
    \end{align*}
    where the first inequality is according to triangle inequality, the second inequality follows from AM-QM inequality, and the last inequality is due to average-smoothness of $\{f_i\}_{i=1}^n$.
\end{proof}
 \section{Proof of Theorem \ref{thm:svre}}\label{appendix:LSVRE}
Following proof is adapted from the proof of Theorem 4.9 in \cite{alacaoglu2021stochastic}.
\begin{proof}
    By Lemma~\ref{lemma:proj-inner}, we know that
    \begin{align*}
        \inner{\vz_{k+1/2} - \bar{\vz}_k + \tau \vg(\vw_k)}{\vz_{k+1} - \vz_{k+1/2}} \ge 0, \\
        \inner{\vz_{k+1} - \bar{\vz}_k + \tau [\vg(\vw_k) + \vg_i(\vz_{k+1/2}) - \vg_i(\vw_k)]}{\vz^* - \vz_{k+1}} \ge 0.
    \end{align*}
    Summing above two inequalities, we have
    \begin{equation}\label{eq:1}
    \begin{aligned}
        \inner{\vz_{k+1} - \bar{\vz}_k}{\vz^* - \vz_{k+1}} + \inner{\vz_{k+1/2} - \bar{\vz}_k}{\vz_{k+1} - \vz_{k+1/2}} \\
        + \tau \inner{\vg(\vw_k) + \vg_i(\vz_{k+1/2}) - \vg_i(\vw_k)}{\vz^* - \vz_{k+1/2}} \\
        + \tau \inner{\vg_i(\vz_{k+1/2}) - \vg_i(\vw_k)}{\vz_{k+1/2} - \vz_{k+1}} \ge 0.
    \end{aligned}
    \end{equation}
    
    Since $2\inner{\va}{\vb} = \norm{\va + \vb}^2 - \norm{\va}^2 - \norm{\vb}^2$, the first term in inequality (\ref{eq:1}) can be written as
    \begin{equation}\label{eq:1-1}
    \begin{aligned}
        & 2\inner{\vz_{k+1} - \bar{\vz}_k}{\vz^* - \vz_{k+1}} \\
        =& 2 \inner{\vz_{k+1} - \alpha \vz_k - (1 - \alpha) \vw_k}{\vz^* - \vz_{k + 1}} \\
        =& 2\alpha \inner{\vz_{k+1} - \vz_k}{\vz^* - \vz_{k+1}} + 2(1 - \alpha) \inner{\vz_{k+1} - \vw_k}{\vz^* - \vz_{k+1}} \\
        =& \alpha \big( \norm{\vz_k - \vz^*}^2 - \norm{\vz_{k+1} - \vz^*}^2 - \norm{\vz_{k+1} - \vz_k}^2 \big) \\
        & + (1 - \alpha) \big( \norm{\vw_k - \vz^*}^2 - \norm{\vz_{k+1} - \vz^*}^2 - \norm{\vz_{k+1} - \vw_k}^2 \big) \\
        =& \alpha \norm{\vz_k - \vz^*}^2 + (1 - \alpha) \norm{\vw_k - \vz^*}^2 - \norm{\vz_{k+1} - \vz^*}^2 \\
        & - \alpha \norm{\vz_{k+1} - \vz_k}^2 - (1 - \alpha) \norm{\vz_{k+1} - \vw_k}^2.
    \end{aligned}
    \end{equation}
    Similarly, the second term in inequality (\ref{eq:1}) can be written as
    \begin{equation}\label{eq:1-2}
    \begin{aligned}
        & 2\inner{\vz_{k+1/2} - \bar{\vz}_k}{\vz_{k+1} - \vz_{k+1/2}} \\
        =& \alpha \norm{\vz_k - \vz_{k+1}}^2 + (1 - \alpha) \norm{\vw_k - \vz_{k+1}}^2 - \norm{\vz_{k+1/2} - \vz_{k+1}}^2 \\
        & - \alpha \norm{\vz_{k+1/2} - \vz_k}^2 - (1 - \alpha) \norm{\vz_{k+1/2} - \vw_k}^2.
    \end{aligned}
    \end{equation}
    
    Using the fact $\E_k [\vg_i(\vz)] = \vg(\vz)$, the expectation of third term in inequality (\ref{eq:1}) can be bounded as 
    \begin{equation}\label{eq:1-3}
    \begin{aligned}
        &\quad 2 \E_k \left[ \inner{\vg(\vw_k) + \vg_i(\vz_{k+1/2}) - \vg_i(\vw_k)}{\vz^* - \vz_{k+1/2}} \right] \\
        &= 2 \inner{\vg(\vz_{k+1/2})}{\vz^* - \vz_{k+1/2}} \\
        &\le 2 \inner{\vg(\vz^*)}{\vz^* - \vz_{k+1/2}} - 2 \mu \norm{\vz_{k+1/2} - \vz^*}^2 \\
        &\le - \mu \E_k\left[\norm{\vz_{k+1} - \vz^*}^2\right] + 2 \mu \E_k\left[\norm{\vz_{k+1/2} - \vz_{k+1}}^2\right],
    \end{aligned}
    \end{equation}
    where the first inequality follows from Lemma \ref{lem:monotone}, and the last inequality is according to Lemma \ref{lem:proj-optimal} and $\norm{\va + \vb}^2 \le 2 \norm{\va}^2 + 2 \norm{\vb}^2$.
    
    Moreover, by Young's inequality $2\inner{\va}{\vb} \le \beta \norm{\va}^2 + \frac{1}{\beta} \norm{\vb}^2$ and $L$-average-smoothness of $\{f_i\}_{i=1}^n$, there holds
    \begin{equation}\label{eq:1-4}
    \begin{aligned}
        &\quad \E_k \left[ 2\tau \inner{\vg_i(\vz_{k+1/2}) - \vg_i(\vw_k)}{\vz_{k+1/2} - \vz_{k+1}} \right] \\
        &\le 2\tau^2 \E_k \left[ \norm{\vg_i(\vz_{k+1/2}) - \vg_i(\vw_k)}^2 \right] + \frac{1}{2} \E_k \left[ \norm{{\vz_{k+1/2} - \vz_{k+1}}}^2\right] \\
        &\le 2\tau^2 L^2  \norm{\vz_{k+1/2} - \vw_k}^2 + \frac{1}{2} \E_k \left[ \norm{{\vz_{k+1/2} - \vz_{k+1}}}^2\right].
    \end{aligned}
    \end{equation}

    Plugging results of (\ref{eq:1-1}), (\ref{eq:1-2}), (\ref{eq:1-3}) and (\ref{eq:1-4}) into inequality (\ref{eq:1}), we obtain that
    \begin{align*}
        &\quad \alpha \norm{\vz_k - \vz^*}^2 + (1 - \alpha) \norm{\vw_k - \vz^*}^2 - \E_k \left[\norm{\vz_{k+1} - \vz^*}^2\right] - \E_k \left[\norm{\vz_{k+1/2} - \vz_{k+1}}^2\right] \\
        &- \alpha \norm{\vz_{k+1/2} - \vz_k}^2 - (1 - \alpha) \norm{\vz_{k+1/2} - \vw_k}^2 \\
        &- \tau\mu \E_k \left[\norm{\vz_{k+1} - \vz^*}^2\right] + 2\tau\mu \E_k \left[\norm{\vz_{k+1/2} - \vz_{k+1}}^2 \right] \\
        &+ 2\tau^2 L^2 \norm{\vz_{k+1/2} - \vw_k}^2 + \frac{1}{2} \E_k \left[ \norm{{\vz_{k+1/2} - \vz_{k+1}}}^2\right] \ge 0,
    \end{align*}
    that is 
    \begin{align*}
        & (1 + \tau \mu) \E_k \left[\norm{\vz_{k+1} - \vz^*}^2\right] \\
        \le & \alpha \norm{\vz_k - \vz^*}^2 + (1 - \alpha) \norm{\vw_k - \vz^*}^2 
        - \left(\frac{1}{2} - 2 \tau \mu\right) \E_k \left[ \norm{{\vz_{k+1/2} - \vz_{k+1}}}^2\right] \\
        &- (1 - \alpha - 2 \tau^2 L^2) \norm{\vz_{k+1/2} - \vw_k}^2.
    \end{align*}

Consequently, with setting $\alpha = 1 - p$, $p = \frac{1}{2 n}$ and $\tau = \frac{1}{4\sqrt{n} L}$, we know that $\frac{1}{2} - 2 \tau \mu \ge 0$, $1 - \alpha - 2\tau^2 L^2 \ge 0$ and
\begin{align}\label{eq:2-1}
    (1 + \tau \mu) \E_k \left[\norm{\vz_{k+1} - \vz^*}^2\right] &\le (1 - p) \norm{\vz_k - \vz^*}^2 + p \norm{\vw_k - \vz^*}^2.
\end{align}

On the other hand, by definition of $\vw_{k+1}$, we have
\begin{align}\label{eq:2-2}
    \E_{k} \left[ \norm{\vw_{k+1} - \vz^*}^2 \right] = (1 - p) \norm{\vw_k - \vz^*}^2 + p \E_{k}\left[\norm{\vz_{k+1} - \vz^*}^2\right].
\end{align}

Following from results of (\ref{eq:2-1}) and (\ref{eq:2-2}), there holds
\begin{align*}
    &\quad (1 + \tau \mu) \E_k \left[\norm{\vz_{k+1} - \vz^*}^2\right] + c \E_{k} \left[ \norm{\vw_{k+1} - \vz^*}^2 \right] \\
    &\le (1 - p) \norm{\vz_k - \vz^*}^2 + p \norm{\vw_k - \vz^*}^2 + c(1 - p) \norm{\vw_k - \vz^*}^2 + c p \E_{k} \left[\norm{\vz_{k+1} - \vz^*}^2\right],
\end{align*}
that is
\begin{equation}\label{eq:3}
\begin{aligned}
    &\quad (1 + \tau \mu - c p) \E_k \left[\norm{\vz_{k+1} - \vz^*}^2\right] + c \E_{k} \left[ \norm{\vw_{k+1} - \vz^*}^2 \right] \\
    &\le (1 - p) \norm{\vz_k - \vz^*}^2 + (p + c (1 - p)) \norm{\vw_k - \vz^*}^2.
\end{aligned}
\end{equation}

Letting $c = \frac{2\tau \mu + 2 p}{\tau \mu + 2 p}$ and noticing that $\tau \mu < 1, p < 1$, we have
\begin{align*}
    \frac{1 - p}{1 + \tau \mu - c p} &= 1 - \frac{\tau \mu - p(c - 1)}{1 + \tau \mu - c p}  
    = 1 - \frac{\tau \mu - \frac{p \tau \mu}{\tau \mu + 2 p}}{1 + \tau \mu - p \frac{2\tau \mu + 2 p}{\tau \mu + 2 p}} \\
    &= 1 - \frac{\tau^2 \mu^2 + p \tau \mu}{\tau \mu(1 + \tau \mu) + 2p (1 - p)} \\
    &\le 1 - \frac{\tau \mu(\tau\mu + p)}{2 \tau \mu + 2p} \le 1 - \frac{p \tau \mu}{2 (\tau \mu + p)}.
\end{align*}
and
\begin{align*}
    \frac{p + c(1 - p)}{c} = 1 - p\left(1 - \frac{\tau \mu + 2 p}{2\tau \mu + 2 p}\right) = 1 - \frac{p \tau \mu}{2 (\tau \mu + p)}.
\end{align*}
Together with inequality (\ref{eq:3}), there holds
\begin{align*}
    &\quad (1 + \tau \mu - c p) \E_k \left[\norm{\vz_{k+1} - \vz^*}^2\right] + c \E_{k} \left[ \norm{\vw_{k+1} - \vz^*}^2 \right] \\
    &\le \left(1 - \frac{p \tau \mu}{2 (\tau \mu + p)}\right)\left( (1 + \tau \mu- c p) \norm{\vz_k - \vz^*}^2 + c \norm{\vw_k - \vz^*}^2\right),
\end{align*}
which implies that 
\begin{align*}
    &\quad \left( \tau \mu + p \right) \E \left[\norm{\vz_{k} - \vz^*}^2\right] \\
    &\le \left( \tau \mu (1 + \tau \mu) + 2p(1 - p) \right) \E \left[\norm{\vz_{k} - \vz^*}^2\right] + 2\left( \tau \mu + p \right) \left[ \norm{\vw_{k} - \vz^*}^2 \right] \\
    &\le \left(1 - \frac{p \tau \mu}{2 (\tau \mu + p)}\right)^{k} \left( \left( \tau \mu (1 + \tau \mu) + 2p(1 - p) \right) \E \left[\norm{\vz_{0} - \vz^*}^2\right] + 2\left( \tau \mu + p \right) \left[ \norm{\vw_{0} - \vz^*}^2 \right] \right) \\
    &\le 4 \left( \tau \mu + p \right)  \norm{\vz_0 - \vz^*}^2 \left(1 - \frac{p \tau \mu}{2 (\tau \mu + p)}\right)^{k},
\end{align*}
where we have recalled that $2(1 - p) \ge 1$ due to $p = \frac{1}{2n} \le \frac{1}{2}$ and $\tau \mu < 1$.

\end{proof}
 \section{Convergence Analysis of AL-SVRE}\label{app:ASVRE}

In this section, we aim to give the convergence rate of AL-SVRE.
We first give the proof of Lemma \ref{lem:SVRE-prime-dual} and establish the connection between the distance to saddle point and primal-dual gap. Then, we show upper bounds of some auxiliary quantities, which is useful to the analysis for AL-SVRE. Finally, we provide the formal proofs of Theorem \ref{thm:asvre} and Corollary~\ref{cor:ASVRE}.

\subsection{The proof of Lemma \ref{lem:SVRE-prime-dual}}
\begin{proof}
    The definition of $\tilde\vx$ means
    \begin{align*}
        \tilde\vx = \argmin_{\vx \in \fX} \left(\inner{\nabla_{\vx} f(\hat\vx, \hat\vy)}{\vx - \hat\vx} + \frac{1}{2\eta} \norm{\vx - \hat\vx}^2\right).
    \end{align*}
    Therefore, for any $\vx \in \fX$, we have
    \begin{align*}
        \inner{\nabla_{\vx} f(\hat\vx, \hat\vy)}{\vx - \hat\vx} + \frac{1}{2\eta} \norm{\vx - \hat\vx}^2\ge \inner{\nabla_{\vx} f(\hat\vx, \hat\vy)}{\tilde\vx - \hat\vx} + \frac{1}{2\eta} \norm{\tilde\vx - \hat\vx}^2,
    \end{align*}
    that is 
    \begin{align}\label{eq:hat-f-sc}
        \inner{\nabla_{\vx} f(\hat\vx, \hat\vy)}{\vx - \tilde\vx} &\ge \frac{1}{2\eta} \left(\norm{\tilde\vx - \hat\vx}^2 - \norm{\vx - \hat\vx}^2\right)
        \ge - \frac{1}{2\eta} \norm{\vx - \hat\vx}^2.
    \end{align}
    
    Let $\Phi_f(\vx)=\max_{\vy\in\fY}f(\vx,\vy)$ and $\vy_f^*(\vx)=\argmax_{\vy\in\fY}f(\vx,\vy)$, then it holds that
    \begin{align}\label{eq:svre-1}
    \begin{split}
         &\quad \Phi_f(\tilde\vx) - \Phi_f(\vx^*) \\
         &= \Phi_f(\tilde\vx) - \Phi_f(\hat\vx) - (\Phi_f(\vx^*) - \Phi_f(\hat\vx)) \\
         &\le \inner{\nabla \Phi_f(\hat\vx)}{\tilde\vx - \hat\vx} + \kappa_y L \norm{\tilde\vx - \hat\vx}^2 - \inner{\nabla \Phi_f(\hat\vx)}{\vx^* - \hat\vx} \\
         &= \inner{\nabla_{\vx} f(\hat\vx, \vy_f^*(\hat\vx))}{\tilde\vx - \vx^*} + \kappa_y L \norm{\tilde\vx - \hat\vx}^2 \\
         &= \inner{\nabla_{\vx} f(\hat\vx, \vy_f^*(\hat\vx)) - \nabla_{\vx} f(\hat\vx, \hat\vy)}{\tilde\vx - \vx^*} + \inner{\nabla_{\vx} f(\hat\vx, \hat\vy)}{\tilde\vx - \vx^*} + \kappa_y L \norm{\tilde\vx - \hat\vx}^2 \\
         &\le L \norm{\hat\vy - \vy_f^*(\hat\vx)}\norm{\tilde\vx - \vx^*} + 
        \frac{1}{2\eta} \norm{\hat\vx - \vx^*}^2 + \kappa_y L \norm{\tilde\vx - \hat\vx}^2.
    \end{split}
    \end{align}
    where the first inequality is according to $\Phi_f$ is $2\kappa_yL$-smooth and convex, the last inequality is due to inequality (\ref{eq:hat-f-sc}) with setting $\vx = \vx^*$.
    
    According to the $\norm{\va + \vb}^2 \le 2 \norm{\va}^2 + 2 \norm{\vb}^2$ and the Lipschitz continuity of $\vy_f^*(\cdot)$ (cf. Lemma \ref{lem:phi-psi}), we observe that
    \begin{align}\label{eq:svre-2}
    \begin{split}
         \norm{\hat\vy - \vy_f^*(\hat\vx)}^2 &\le 2\norm{\hat\vy - \vy^*}^2 + 2\norm{\vy^* - \vy_f^*(\hat\vx)}^2 \\
         &\le 2 \norm{\hat\vy - \vy^*}^2 + 2 \kappa_y^2 \norm{\hat\vx - \vx^*} \\
         &\le 2 \kappa_y^2 \eps.
    \end{split}
    \end{align}
    
    Next, following from optimality of $\vx^*$ such that $\vx^*=\argmin_{\vx\in\fX}\Phi_f(\vx)$ and Lemma \ref{lem:proj-optimal}, we have
    \begin{align*}
        \vx^* = \fP_{\fX} \left( \vx^* - \eta \nabla_{\vx} \Phi(\vx^*) \right) = \fP_{\fX} \left(\vx^* - \eta \nabla_{\vx} f(\vx^*, \vy^*)\right).
    \end{align*}
    Hence, by smoothness of the function $f$ and Lemma \ref{lem:non-expansive}, we have
    \begin{align}\label{eq:svre-3} 
    \begin{split}
         \norm{\tilde\vx - \vx^*} 
         &= \norm{\fP_{\fX} \left(\hat\vx - \eta \nabla_{\vx} f(\hat\vx, \hat\vy)\right) - \fP_{\fX} \left(\vx^* - \eta \nabla_{\vx} f(\vx^*, \vy^*)\right)} \\
         &\le \norm{\hat\vx - \vx^* - \eta \left(\nabla_{\vx} f(\hat\vx, \hat\vy) - \nabla_{\vx} f(\vx^*, \vy^*)\right)} \\
        &\le \norm{\hat\vx - \vx^*} + \eta L \sqrt{\norm{\hat\vx - \vx^*}^2 + \norm{\hat\vy - \vy^*}^2}\\
        &\le (1 + \eta L) \sqrt{\eps}.
    \end{split}
    \end{align}
    
    Consequently, plugging inequalities (\ref{eq:svre-2}) and (\ref{eq:svre-3}) into (\ref{eq:svre-1}) yields that 
    \begin{align*}
        \max_{\vy \in \fY} f(\tilde\vx, \vy) - f(\vx^*, \vy^*) 
        &\le \sqrt{2} (1 + \eta L) \kappa_y L \eps + \frac{\eps}{2\eta} + 2\kappa_y L \left( \norm{\tilde\vx - \vx^*}^2 + \norm{\hat\vx - \vx^*}^2 \right) \\
        &\le \left( \sqrt{2}(1 + \eta L) + 2(1 + \eta L)^2 + 2 \right) \kappa_y L \eps + \frac{\eps}{2\eta}
    \end{align*}
    
    Similarly, by definition of $\tilde\vy$, we also have
    \begin{align}\label{eq:svre-3b} 
         \norm{\tilde\vy - \vy^*} 
         &= \norm{\hat\vy - \vy^* + \eta \left(\nabla_{\vy} f(\hat\vx, \hat\vy) - \nabla_{\vy} f(\vx^*, \vy^*)\right)},
    \end{align}
    and 
    \begin{align*}
        g(\vx^*, \vy^*) - \min_{\vx \in \fX} g(\vx, \tilde\vy) \le \left( \sqrt{2}(1 + \eta L) + 2(1 + \eta L)^2 + 2 \right) \kappa_x L \eps + \frac{\eps}{2\eta}.
    \end{align*}
    
    Furthermore, by inequality (\ref{eq:svre-3}) and (\ref{eq:svre-3b}), we have
    {\small\begin{align}
        \notag & \norm{\tilde\vx - \vx^*}^2 + \norm{\tilde\vy - \vy^*}^2 \\
        \notag = & \norm{\fP_\fX\left(\hat\vx - \eta\nabla_{\vx} f(\hat\vx, \hat\vy)\right) - \fP_\fX\left(\vx^*-\eta\nabla_{\vx} f(\vx^*, \vy^*)\right)}^2 \\
        \notag & + \norm{\fP_\fY\left(\hat\vy + \eta\nabla_{\vy} f(\hat\vx, \hat\vy)\right) - \fP_\fY\left(\vy^* + \eta \nabla_{\vy} f(\vx^*, \vy^*)\right)}^2 \\
        \notag \le & 2\norm{\hat\vx - \vx^*}^2 + 2\norm{\hat\vy - \vy^*}^2 + 2\eta^2\left( \norm{\nabla_{\vx} f(\hat\vx, \hat\vy) - \nabla_{\vx} f(\vx^*, \vy^*)}^2 + \norm{\nabla_{\vy} f(\hat\vx, \hat\vy) - \nabla_{\vy} f(\vx^*, \vy^*)}^2\right) \\
        \label{eq:prime-dual-distance} \le & 2\eps + 2\eta^2 L^2 \left(\norm{\hat\vx - \vx^*}^2 + \norm{\hat\vy - \vy^*}^2\right) \le 2(1 + \eta^2L^2)\eps,
    \end{align}}
    where the second inequality is according to the smoothness of $f$.
\end{proof}

\subsection{The Proof of Lemma \ref{lem:dist-prime-dual}}

\begin{proof}
    Following the convexity of function $f(\cdot, \vy^*)$, we know that
    \begin{align*}
        \frac{\mu_x}{2} \norm{\hat\vx - \vx^*}^2 \le f(\hat\vx, \vy^*) - f(\vx^*, \vy^*). 
    \end{align*}
    Similarly, the concavity of function $f(\vx^*, \cdot)$ leads to
    \begin{align*}
        \frac{\mu_y}{2} \norm{\hat\vy - \vy^*}^2 \le f(\vx^*, \vy^*) - f(\vx^*, \hat\vy).
    \end{align*}
    Together with these pieces, it holds that
    \begin{align*}
        \mu_x \norm{\hat\vx - \vx^*}^2 + \mu_y \norm{\hat\vy - \vy^*}^2 \le 2\left( f(\hat\vx, \vy^*) - f(\vx^*, \hat\vy) \right) \le 2 \left( \max_{\vy \in \fY} f(\hat\vx, \vy) - \min_{\vx \in \fX} f(\vx, \hat\vy)\right).
    \end{align*}
\end{proof}

\subsection{The Proof of Lemma \ref{lem:aux}}


\begin{proof}
    It is easy to verify that that $F_k$ is $(L + \beta)$-smooth and $(\mu_x + \beta, \mu_y)$-convex-concave. 
    We will use induction to prove inequalities (\ref{eq:induction-2}) to (\ref{eq:induction-6}) hold for each $k \ge 1$.
    
    We first assume that inequalities (\ref{eq:induction-2}) to (\ref{eq:induction-6}) hold for any $k=1,2\dots, K-1$, then we prove the statements for $k=K$.
    
    \paragraph{Part (a): Inequalities (\ref{eq:induction-2}) and (\ref{eq:induction-3}) hold for $k = K$.} ~\\[0.05cm]
    By definition of $T_{K}$, we know that
    \begin{align*}
        e^{\theta T_{K}} \ge 12 \left(\frac{2}{1-\rho} + \frac{1728 \beta(L + \beta)(7(L+\beta) + 2\sqrt{n}\mu_y))}{\mu_x\mu_y \min\{\mu_x, \mu_y\} (1 - \rho)^2(\sqrt{q} - \rho)^2}\right) \triangleq R,
    \end{align*}
    where $\frac{1}{\theta} = 4\left(n + \frac{2 \sqrt{n}(L+\beta)}{\min\{\mu_x + \beta, \mu_y\}}\right)$.
    
    Let $\{\vu_{K, t}, \vv_{K, t}\}_{t \ge 0}$ be the sequence of using L-SVRE to solve minimax problem
    \begin{align*}
        \min_{\vu\in\fX}\max_{\vv\in\fY} F_{K}(\vu,\vv)    
    \end{align*}
    with initial point $(\vu_{K, 0}, \vv_{K, 0}) = (\vx_{K-1}, \vy_{K-1})$ and stepsize $\tau_{K} = \frac{1}{4\sqrt{n}(L + \beta)}$. 
    Based on Theorem \ref{thm:svre}, we have 
    \begin{align}\label{eq:inner-1}
    \begin{split}
        &\quad \E \left[\norm{\vu_{K, T_{K}} - \vx_{K}^*}^2 + \norm{\vv_{K, T_{K}} - \vy_{K}^*}^2 \right] \\ 
        &\le 4\left(1 - \theta\right)^{T_{K}} \E \left[\norm{\vx_{K-1} - \vx_{K}^*}^2 + \norm{\vy_{K-1} - \vy_{K}^*}^2\right] \\
        &\le 4 e^{-\theta T_{K}} \E \left[\norm{\vx_{K-1} - \vx_{K}^*}^2 + \norm{\vy_{K-1} - \vy_{K}^*}^2\right] \\
        &\le \frac{4}{R} \E \left[\norm{\vx_{K-1} - \vx_{K}^*}^2 + \norm{\vy_{K-1} - \vy_{K}^*}^2\right].
    \end{split}
    \end{align}
    
    Note that
    {\small\begin{align}\label{eq:inner-2}
    \begin{split}
         &\quad \E\left[\norm{\vx_{K-1} - \vx^*_{K}}^2 + \norm{\vy_{k-1} - \vy^*_{K}}^2\right] \\
         &\le 2\E\left[\norm{\vx_{K-1} - \vx^*_{K-1}}^2 + \norm{\vy_{K-1} - \vy^*_{K-1}}^2\right] + 2\E\left[\norm{\vx_{K-1}^* - \vx_{K}^*}^2 + \norm{\vy_{K-1}^* - \vy_{K}^*}^2\right] \\
         &\le 2\eps_{K-1} + \frac{144\beta \delta_{K-3}}{\mu_x \min\{\mu_x, \mu_y\}} ,
    \end{split}
    \end{align}}
    where we have used induction hypothesis (\ref{eq:induction-2}) and (\ref{eq:induction-6}). 
    
    Plugging (\ref{eq:inner-2}) into (\ref{eq:inner-1}), we have
    \begin{align}\label{eq:inner-3}\begin{split}
        &\quad \E \left[\norm{\vu_{K, T_{K}} - \vx_{K}^*}^2 + \norm{\vv_{K, T_{K}} - \vy_{K}^*} \right] \\
        &\le \frac{4}{R} \left( 2\eps_{K-1} + \frac{144\beta \delta_{K-3}}{\mu_x \min\{\mu_x, \mu_y\}}  \right) \\
        &= \frac{4}{R} \left( 2\eps_{K-1} + \frac{144\beta}{\mu_x \min\{\mu_x, \mu_y\}} \cdot \frac{8\Delta_f (1 - \rho)^{K-2}}{(\sqrt{q} - \rho)^2} \right) \\
        &= \frac{4}{R} \left(\frac{2}{1-\rho} + \frac{1728 \beta(L + \beta)(7(L+\beta) + 2\sqrt{n}\mu_y))}{\mu_x\mu_y \min\{\mu_x, \mu_y\} (1 - \rho)^2(\sqrt{q} - \rho)^2}\right) \eps_{K}
        \le \frac{1}{3}\eps_{K}.
    \end{split}\end{align}
    
    Consequently, combing inequality (\ref{eq:inner-3}) with equation (\ref{eq:prime-dual-distance}) in the proof of Lemma \ref{lem:SVRE-prime-dual}, we obtain
    \begin{align*}
        \E\left[\norm{\vx_{K} - \vx^*_{K}}^2 + \norm{\vy_{K} - \vy^*_{K}}^2\right] \le \frac{2(1 + \tau_K^2 (L+\beta)^2)}{3}\eps_{K} \le \frac{2(1 + 1/16)}{3}\eps_{K} \le \eps_{K},  
    \end{align*}
    and
    \begin{align*}
        &\quad \max_{\vy \in \fY} F_{K}(\vx_{K}, \vy) - F_{K}(\vx_K^*, \vy_{K}^*) \\
        &\le \left( \sqrt{2}(1 + \tau_K (L + \beta)) + 2(1 + \tau_K (L + \beta))^2 + 2 \right) \frac{(L+\beta)^2}{\mu_y} \frac{\eps_K}{3} + \frac{\eps_K}{6 \tau_K} \\
        &\le \left[ \left( \frac{5}{4}\sqrt{2} + \frac{50}{16} + 2 \right) \frac{(L+\beta)^2}{\mu_y} + 2\sqrt{n}(L+\beta) \right]\frac{\eps_{K}}{3} \\
        &\le \left( \frac{7(L + \beta)}{\mu_y} + 2\sqrt{n} \right)(L + \beta) \frac{\eps_K}{3} \\
        &= \frac{2\Delta_f}{9}(1 - \rho)^{K},
    \end{align*}
    where we have used $\tau_K = \frac{1}{4\sqrt{n}(L + \beta)}$ and
    \begin{align*}
        \begin{cases}
            \vx_{K} = \fP_{\fX} \left(\vu_{K, T_{K}} - \tau_{K} \nabla_{\vx} F_{K}(\vu_{K, T_{K}},  \vv_{K, T_{K}})\right) \\
            \vy_{K} = \fP_{\fY} \left(\vv_{K, T_{K}} + \tau_{K} \nabla_{\vy} F_{K}(\vu_{K, T_{K}},  \vv_{K, T_{K}})\right).
        \end{cases}
    \end{align*} 
    
    Therefore, we have proved inequalities (\ref{eq:induction-2}) and (\ref{eq:induction-3}) hold for $1 \le k \le K$.

    \paragraph{Part (b):  Inequality (\ref{eq:induction-4}) holds for $k = K$.} ~\\[0.1cm]
    Let $\Phi(\vx) \triangleq \max_{\vy \in \fY} f(\vx, \vy)$, and $\Phi_k(\vx) \triangleq \max_{\vy \in \fY} F_k(\vx, \vy) = \Phi(\vx) + \frac{\beta}{2}\norm{\vx - \vu_{k-1}}^2$. 
    It is easy to check that $\vx_k^* = \argmin_{\vx \in \fX} \Phi_k(\vx)$ and $\Phi_k$ is $\mu_x$-strongly convex by Lemma \ref{lem:phi-psi}. 
    
    
    Part (a) means we have (\ref{eq:induction-3}) holds for $1 \le k \le K$, which implies that
    \begin{align*}
        \E \left[\Phi_k(\vx_k)\right] - \Phi_k^* \le \frac{2\Delta_f}{9}(1 - \rho)^k, ~~~\text{ for } 1 \le k \le K.
    \end{align*}
    
    Consequently, according to the analysis of Catalyst for convex minimization 
    \cite{lin2018catalyst}\footnote{The original proof of Proposition 5 in Pages 45-46 of \cite{lin2018catalyst} should be slightly modified by employing $\Phi(\vx_0) - \Phi^* = \max_{\vy} f(\vx_0, \vy) - f(\vx^*, \vy^*) \le \Delta_f$. In fact, this result means inequality (\ref{eq:induction-4}) also holds for $k=0$.}, 
    it holds that
    \begin{align}\label{eq:induction-7}
        \E \Phi(\vx_k) - \Phi^* \le \frac{8\Delta_f}{(\sqrt{q} - \rho)^2} (1 - \rho)^{k+1} = \delta_k, ~~~\text{ for }~~ 0 \le k \le K.
    \end{align}
    
    Combing above inequality with the strong convexity of $\Phi$, we have
    \begin{align*}
        \frac{\mu_x}{2} \ \E\norm{\vx_k - \vx^*}^2 \le \E \Phi(\vx_k) - \Phi^* \le \delta_k, ~~~\text{ for }~~ 0 \le k \le K.
    \end{align*}
    
    
    \paragraph{Part (c): Inequality (\ref{eq:induction-6}) holds for $k = K$.}
    ~\\[0.1cm]
    For $K \ge 2$, the definition of $\vu_k$ and $\gamma < 1$ means
    \begin{align*}
        \norm{\vu_{K-1} - \vu_{K}} 
        &= \norm{\vx_{K-1} + \gamma(\vx_{K-1} - \vx_{K-2}) - \vx_{K} - \gamma(\vx_{K} - \vx_{K-1})} \\
        &\le (1 + \gamma)\norm{\vx_{K} - \vx_{K-1}} + \gamma \norm{\vx_{K-1} - \vx_{K-2}} \\
        &\le 3 \max\{\norm{\vx_{K} - \vx_{K-1}}, \norm{\vx_{K-1} - \vx_{K-2}}\}.
    \end{align*}
    Thus, we have
    {\small\begin{align*}
        \norm{\vu_{K-1} - \vu_{K}}^2 
        &\le 9 \max\{\norm{\vx_{K} - \vx_{K-1}}^2, \norm{\vx_{K-1} - \vx_{K-2}}^2\} \\
        &\le 9 \max\{2\norm{\vx_{K} - \vx^*}^2 + 2\norm{\vx_{K-1} - \vx^*}^2, 2\norm{\vx_{K-1} - \vx^*}^2 + 2\norm{\vx_{K-2} - \vx^*}^2\} \\
        &\le 36 \max\{\norm{\vx_{K} - \vx^*}^2, \norm{\vx_{K-1} - \vx^*}^2, \norm{\vx_{K-2} - \vx^*}^2\} \\
        &\le \frac{72 \delta_{K-2}}{\mu_x} ,
    \end{align*}}
    where the last inequality is according to the fact that (\ref{eq:induction-4}) holds for $0 \le k \le K$. 
    
    Next, by strong convexity of $F_K(\cdot, \vy_K^*)$ and $\vx_K^* = \argmin_{\vx \in \fX} F_K(\vx, \vy_K^*)$, we have
    \begin{align*}
        \frac{\mu_x + \beta}{2} \norm{\vx^*_K - \vx^*_{K+1}}^2 \le F_K(\vx_{K+1}^*, \vy_K^*) - F_K(\vx_K^*, \vy_K^*).
    \end{align*}
    Similarly, by strong concavity of $F_K(\vx_K^*, \cdot)$ and $\vy_K^* = \argmax_{\vy \in \fY} F_K(\vx_K^*, \vy)$, we have
    \begin{align*}
        \frac{\mu_y}{2} \norm{\vy^*_K - \vy^*_{K+1}}^2 \le F_K(\vx_{K}^*, \vy_K^*) - F_K(\vx_K^*, \vy_{K+1}^*).
    \end{align*}
    Therefore, we can conclude that
    \begin{align}\label{eq:star-star}
    \begin{split}
        &\quad \frac{\mu_x + \beta}{2} \norm{\vx^*_K - \vx^*_{K+1}}^2 + \frac{\mu_y}{2} \norm{\vy^*_K - \vy^*_{K+1}}^2 \\
        &\le F_K(\vx_{K+1}^*, \vy_K^*) - F_K(\vx_K^*, \vy_{K+1}^*) \\
        &= f(\vx_{K+1}^*, \vy_K^*) + \frac{\beta}{2} \norm{\vx_{K+1}^* - \vu_{K-1}}^2 - \left(f(\vx_{K}^*, \vy_{K+1}^*) + \frac{\beta}{2} \norm{\vx_{K}^* - \vu_{K-1}}^2\right) \\
        &= f(\vx_{K+1}^*, \vy_K^*) + \frac{\beta}{2} \left(\norm{\vx_{K+1}^* - \vu_{K}}^2 + \norm{\vu_K - \vu_{K-1}}^2 + 2\inner{\vx_{K+1}^* - \vu_{K}}{\vu_K - \vu_{K-1}} \right) \\
        &- \left(f(\vx_{K}^*, \vy_{K+1}^*) + \frac{\beta}{2} \left(\norm{\vx_{K}^* - \vu_{K}}^2 + \norm{\vu_K - \vu_{K-1}}^2 + 2\inner{\vx_{K}^* - \vu_{K}}{\vu_K - \vu_{K-1}}\right)\right) \\
        &= F_{K+1}(\vx_{K+1}^*, \vy_K^*) - F_{K+1}(\vx_K^*, \vy_{K+1}^*) + \beta \inner{\vx_{K+1}^* - \vx_{K}^*}{\vu_K - \vu_{K-1}} \\
        &\le \beta \inner{\vx_{K+1}^* - \vx_{K}^*}{\vu_K - \vu_{K-1}} 
        \le \frac{\beta}{2} \norm{\vx_{K+1}^* - \vx_{K}^*} + \frac{\beta}{2}\norm{\vu_K - \vu_{K-1}}^2,
    \end{split}
    \end{align}
    where we have used that $F_{K+1}(\vx_{K+1}^*, \vy_K^*) \le F_{K+1}(\vx_{K+1}^*, \vy_{K+1}^*) \le F_{K+1}(\vx_{K}^*, \vy_{K+1}^*)$.
    Hence, it holds that
    \begin{align*}
        \E\left[ \norm{\vx^*_K - \vx^*_{K+1}}^2 + \norm{\vy^*_K - \vy^*_{K+1}}^2 \right]
        \le \frac{\beta}{\min\{\mu_x, \mu_y\}}\E\norm{\vu_K - \vu_{K-1}}^2 \le \frac{72\beta \delta_{K-2}}{\mu_x \min\{\mu_x, \mu_y\}}.
    \end{align*}
    
    We also need to show the induction base to finish the proof of inequalities (\ref{eq:induction-2}) to (\ref{eq:induction-6}).
    
    \paragraph{Part (d): Induction base.} ~\\[0.1cm]
    Finally, we present the induction base that inequalities (\ref{eq:induction-2}) to (\ref{eq:induction-6}) hold for $k=1$.
    
    Since $F_1$ is $(\mu_x + \beta, \mu_y)$-convex-concave, we have
    \begin{align*}
        &\quad \frac{\mu_x + \beta}{2} \norm{\vx_0 - \vx_1^*}^2  + \frac{\mu_y}{2} \norm{\vy_0 - \vy_1^*}^2 \\
        &\le F_1(\vx_0, \vy_1^*) - F_1(\vx_1^*, \vy_1^*) + F_1(\vx_1^*, \vy_1^*) - F_1(\vx_1^*, \vy_0) \\
        &= f(\vx_0, \vy_1^*) - f(\vx_1^*, \vy_0) - \frac{\beta}{2} \norm{\vx_1^* - \vx_0}^2 \\
        &\le \max_{\vy \in \fY} f(\vx_0, \vy) - \min_{\vx \in \fX} f(\vx, \vy_0) = \Delta_f,
    \end{align*}
    which implies
    \begin{align}\label{ieq:induction-help}
    \begin{split}
        & \E \left[\norm{\vu_{1, T_1} - \vx_{1}^*}^2 + \norm{\vv_{1, T_1} - \vy_{1}^*} \right] \\
        \le & 4 e^{-\theta T_1} \E \left[\norm{\vx_{0} - \vx_{1}^*}^2 + \norm{\vy_{0} - \vy_{1}^*}^2\right] \\
        \le & \frac{4}{R} \frac{2\Delta_f}{\min \{\mu_x, \mu_y\}} \le \frac{1}{3}\eps_{1}.
    \end{split}
    \end{align}
    
    Similar to the proof in Part (a), we can use Lemma \ref{lem:SVRE-prime-dual}, \ref{lem:average-smooth} and inequality (\ref{ieq:induction-help}) to show that (\ref{eq:induction-2}) and (\ref{eq:induction-3}) hold for $k=1$.
    
    Moreover, according to Proposition 5 in \cite{lin2018catalyst} and the strong convexity of $F_1$, we know that (\ref{eq:induction-4}) holds for $k=1$. 
    
    Note that $\vz_1 - \vz_0 = (1 + \gamma)(\vy_1 - \vy_0)$, thus we have
    \begin{align*}
        \E \norm{\vu_0 - \vu_1}^2 \le 4 \E \norm{\vx_1 - \vx_0}^2 \le \frac{32\delta_0}{\mu_x} \le \frac{72 \delta_{-1}}{\mu_x}.
    \end{align*}    
    Therefore, by inequality (\ref{eq:star-star}) in Part (c), we have
    \begin{align*}
        \E\left[ \norm{\vx_1^* - \vx_2^*}^2 + \norm{\vy_1^* - \vy_2^*}^2 \right] \le \frac{\beta}{\min\{\mu_x, \mu_y\}}\E\norm{\vu_1 - \vu_0}^2 \le \frac{72\beta\delta_{-1}}{\mu_x \min\{\mu_x, \mu_y\}}.
    \end{align*}
    
    As a conclusion, we have proved inequalities (\ref{eq:induction-2}) to (\ref{eq:induction-6}) hold for any $k \ge 1$. \\[0.1cm]
\end{proof}

\subsection{The proof of Theorem~\ref{thm:asvre}}
\begin{proof}
    Consider that the fact
    \begin{align*}
        \vy_k^* = \argmax_{\vy \in \fY} \left(f(\vx_k^*, \vy) + \frac{\beta}{2}\norm{\vx_k^* - \vu_{k-1}}^2\right) = \argmax_{\vy \in \fY} f(\vx_k^*, \vy) = \vy_f^*(\vx_k^*),
    \end{align*} 
    then we have
    \begin{align}\label{eq:xk-star-x-star}
        \norm{\vy_k^* - \vy^*} = \norm{\vy_f^*(\vx_k^*) - \vy_f^*(\vx^*)} \le \kappa_y \norm{\vx_k^* - \vx^*}
    \end{align}
    by using Lemma \ref{lem:phi-psi}.
    
    Therefore, we conclude that
    \begin{align*}
        &\quad \E \left[\norm{\vu_{k, T_k} - \vx^*}^2 + \norm{\vv_{k, T_k} - \vy^*}^2\right] \\
        &\le 2\E \left[\norm{\vu_{k, T_k} - \vx_k^*}^2 + \norm{\vv_{k, T_k} - \vy_k^*}^2\right] + 2\E\left[ \norm{\vx_k^* - \vx^*}^2 + \norm{\vy_k^* - \vy^*}^2 \right] \\
        &\le \frac{2}{3} \eps_k + 2(\kappa_y^2 + 1) \E \norm{\vx_k^* - \vx^*}^2 \\
        &\le \eps_k + 4(\kappa_y^2 + 1) \E \left[\norm{\vx_k^* - \vx_k}^2 + \norm{\vx_k - \vx^*}^2\right] \\
        &\le \eps_k + 4(\kappa_y^2 + 1) \left( \eps_k + \frac{2\delta_k}{\mu_x} \right) \triangleq \hat\eps,
    \end{align*}
    where the second inequality is due to (\ref{eq:inner-3}) and (\ref{eq:xk-star-x-star}) and the last one is according to (\ref{eq:induction-2}) and (\ref{eq:induction-4}).
    
    Note that
    \begin{align*}
        \vy_k = \fP_{\fY} \left(\vv_{k, T_k} + \tau_k \nabla_{\vy} F_k(\vu_{k, T_k}, \vv_{k, T_k})\right) = \fP_{\fY} \left(\vv_{k, T_k} + \tau_k \nabla_{\vy} f(\vu_{k, T_k}, \vv_{k, T_k})\right).
    \end{align*}
    Then, Lemma \ref{lem:SVRE-prime-dual} and \ref{lem:average-smooth} implies
    \begin{align*}
        &\quad \E\left[f(\vx^*, \vy^*) - \min_{\vx \in \fX} g(\vx, \vy_k)\right]  \\
        \le & \left( \sqrt{2}\left(1 + \frac{L}{4\sqrt{n}(L + \beta)}\right) + 2\left(1 + \frac{L}{4\sqrt{n}(L + \beta)}\right)^2 + 2 \right) \kappa_x L \hat\eps + 2\sqrt{n}(L + \beta) \hat\eps \\
        &\le 7 \kappa_x L \hat\eps + 2\sqrt{n}(L + \beta)\hat\eps 
        \le \left( 7 \kappa_x L + 2\sqrt{n}(L + \beta) \right) \left( (4\kappa_y^2 + 5)\eps_k + \frac{8(\kappa_y^2 + 1)}{\mu_x} \delta_k \right) \\
        &\le \left( 7 \kappa_x L + 2\sqrt{n}(L + \beta) \right) \left( \frac{2 \Delta_f(4\kappa_y^2 + 5) \mu_y}{3(L + \beta)(7(L + \beta) + 2\sqrt{n} \mu_y) } + \frac{64\Delta_f(\kappa_y^2 + 1)(1 - \rho)}{\mu_x(\sqrt{q} - \rho)^2} \right) (1 - \rho)^{k} \\
        &\le \left( \frac{2 (4 \kappa_y^2 + 5)(\mu_x + \mu_y)}{3 \mu_x} + \frac{128 \kappa_y^2 \left( 7 \kappa_x L + 2\sqrt{n}(L + \beta) \right) }{\mu_x (\sqrt{q} - \rho)^2} \right) \Delta_f (1 - \rho)^{k}
    \end{align*}
    Together with inequality (\ref{eq:induction-7}), we obtain that
    \begin{align*}
        &\quad \E\left[\max_{\vy \in \fY} f(\vx_k, \vy) - \min_{\vx \in \fX} f(\vx, \vy_k)\right] \\
        &\le \Delta_f (1 - \rho)^{k} \left( \frac{2 (4 \kappa_y^2 + 5)(\mu_x + \mu_y)}{3 \mu_x} + \frac{128 \kappa_y^2 \left( 7 \kappa_x L + 2\sqrt{n}(L + \beta) \right) }{\mu_x (\sqrt{q} - \rho)^2} + \frac{8}{(\sqrt{q} - \rho)^2} \right) \\
        &\le \Delta_f (1 - \rho)^{k} \left( 12\kappa_y^2 \kappa_x + \frac{128 \kappa_y^2 \left( 7 \kappa_x L + 2\sqrt{n}(L + \beta) \right) }{\mu_x (\sqrt{q} - \rho)^2} + \frac{8}{(\sqrt{q} - \rho)^2} \right) \\
        &\le \frac{916\Delta_f\left( \kappa_x L + \sqrt{n}(L + \beta) \right)\kappa_y^2}{\mu_x(\sqrt{q} - \rho)^2} (1 - \rho)^{k}.
    \end{align*}
\end{proof}

\subsection{The Proof of Corollary \ref{cor:ASVRE}}
\begin{proof}
    First, note that $\beta = \min\{\mu_y - \mu_x, \max\{L / \sqrt{n} - \mu_x, 0\}\} \le \mu_y - \mu_x \le L$
    and
    \begin{align*}
        \frac{1}{q} = \frac{\mu_x + \beta}{\mu_x} \le \frac{\mu_y}{\mu_x}.
    \end{align*}
    Together with Theorem \ref{thm:asvre}, we have
    \begin{align*}
        &\quad \E \left[\max_{\vy \in \fY} f(\vx_K, \vy) - \min_{\vx\in\fX} f(\vx, \vy_K)\right] 
        \le e^{-\rho K} \frac{916\Delta_f\left( \kappa_x L + \sqrt{n}(L + \beta) \right)\kappa_y^2}{\mu_x(\sqrt{q} - \rho)^2} \\
        &\le e^{-\rho K} \left( \frac{2748\sqrt{n}\Delta_f \kappa_y^2 \kappa_x^2}{(\sqrt{q} - \rho)^2} \right) 
        \le e^{-\rho K} \left( 10992\sqrt{n}\Delta_f \kappa_y \kappa_x^3 \right) \le \eps,
    \end{align*}
    where we use that $\rho = 0.5 \sqrt{q}$. 
    
    Recall that
    \begin{align*}
        & T_k \\
        =& \ceil{4\left(n + \frac{2 \sqrt{n}(L+\beta)}{\min\{\mu_x + \beta, \mu_y\}}\right) \log\left( 12 \left(\frac{2}{1-\rho} + \frac{1728 \beta(L + \beta)(7(L+\beta) + 2\sqrt{n}\mu_y))}{\mu_x\mu_y \min\{\mu_x, \mu_y\} (1 - \rho)^2(\sqrt{q} - \rho)^2}\right) \right)} \\
        \le& \ceil{4\left(n + \frac{4 \sqrt{n}L}{\min\{\mu_x + \beta, \mu_y\}}\right) \log\left( 12 \left(4 + \frac{884736 \sqrt{n} L^3 )}{\mu_x^2 \mu_y q}\right) \right)} \\
        \le& \ceil{4\left(n + \frac{4 \sqrt{n}L}{\min\{\mu_x + \beta, \mu_y\}}\right) \log\left( 10616880  \sqrt{n} \kappa_x^3 \right)},
    \end{align*}
    where we have noticed that $\rho \le 0.5$.
    
    Therefore, the total SFO complexity of AL-SVRE is 
    \begin{align*}
        \sum_{k=1}^K (T_k + n) = \fO\left(\left(\frac{2 n}{\sqrt{q}} + \frac{2}{\sqrt{q}} \left( n + \frac{4\sqrt{n}L}{\min\{\mu_x + \beta, \mu_y\}} \right)\log(\sqrt{n}\kappa_x^3)\right) \log(\sqrt{n}\Delta_f \kappa_y \kappa_x^3/\eps) \right).
    \end{align*}
    
    Observe that
\begin{enumerate}
    \item if $\kappa_y \ge \sqrt{n}$, we have $\mu_y \le L/\sqrt{n}$, $\beta = \mu_y - \mu_x$, which means
    \begin{align*}
        \frac{n}{\sqrt{q}} &= n \sqrt{\frac{\mu_y}{\mu_x}} \le n^{3/4} \sqrt{\kappa_x}
    \end{align*}
    and
    \begin{align*}    
        \frac{1}{\sqrt{q}} \cdot \frac{\sqrt{n} L}{\min\{\mu_x + \beta, \mu_y\}} &=  \frac{\sqrt{n} L}{\mu_y} \sqrt{\frac{\mu_y}{\mu_x}} = \sqrt{n} \sqrt{\kappa_x \kappa_y};
    \end{align*}
    \item if $\kappa_x > \sqrt{n} > \kappa_y$, we have $\beta = L/\sqrt{n} - \mu_x$, $\mu_x + \beta = L/\sqrt{n} < \mu_y$ , which means
    \begin{align*}
        \frac{n}{\sqrt{q}} &= n \sqrt{\frac{L}{\sqrt{n}\mu_x}} = n^{3/4} \sqrt{\kappa_x} 
    \end{align*}
    and
    \begin{align*}  
        \frac{1}{\sqrt{q}} \cdot \frac{\sqrt{n} L}{\min\{\mu_x + \beta, \mu_y\}} &= \frac{\sqrt{n} L}{L/\sqrt{n}} \sqrt{\frac{L}{\sqrt{n}\mu_x}} = n^{3/4} \sqrt{\kappa_x};
    \end{align*}
    \item if $\sqrt{n} \ge \kappa_x$, we have $\beta = 0$, which means
    \begin{align*}
        \frac{n}{\sqrt{q}} = n \quad \text{and} \quad
        \frac{1}{\sqrt{q}} \cdot \frac{\sqrt{n} L}{\min\{\mu_x + \beta, \mu_y\}} = \sqrt{n} \kappa_x \le n.
    \end{align*}
\end{enumerate}

Hence, we can conclude that the total SFO complexity of AL-SVRE is \begin{align*}
    \sum_{k=1}^K (T_k + n) = \fO\left( \sqrt{n} \sqrt{(\sqrt{n} + \kappa_x)(\sqrt{n} + \kappa_y)}\log(\sqrt{n}\kappa_x^3) \log(\sqrt{n}\Delta_f \kappa_y \kappa_x^3/\eps) \right).
\end{align*}
    
\end{proof}
\section{The Proof of Theorem \ref{thm:balance-lower-bound}}\label{appendix:lower}

The construction for the lower bound in Theorem \ref{thm:balance-lower-bound} follows from the idea of ``zero-chain'' property~\cite{zhang2019lower,han2021lower}. Since we focus on SFO algorithm without proximal operator, our analysis is simpler than \citet{han2021lower} and \citet{xie2020lower}'s.

Without loss of generality, we assume that the SFO algorithm starts iteration at $(\vx^{(0)}, \vy^{(0)}) = (\vzero_{d_x}, \vzero_{d_y})$. Otherwise, we can take the objective function
{\small\begin{align*}
    \hat{f}(\vx, \vy) = \frac{1}{n} \sum_{i=1}^n f_i(\vx + \vx^{(0)}, \vy + \vy^{(0)})
\end{align*}}
into consideration.

Consider following function $H: \BR^d \times \BR^d \to \BR$ defined as 
\begin{align}\label{eq:H}
    H(\vx, \vy; \alpha, d) = \frac{\alpha}{2} \norm{\vx}^2 + \vx^{\top} (\mB \vy - \vc) - \frac{\alpha}{2} \norm{\vy}^2,
\end{align}
where
\begin{align*}
    \mB = \begin{bmatrix}
        1  &   & & & \\
        -1 & 1 & & & \\
           & \ddots & \ddots & & \\
           & & -1 & 1 & \\
          & & & -1 & \sqrt{\alpha\omega}
    \end{bmatrix} \in \BR^{d \times d},
\end{align*}
$\vc = \left( \omega, 0, 0, \dots, 0\right)^{\top}$ and  $\omega = \frac{\sqrt{\alpha^2 + 4} - \alpha}{2}$.

Furthermore, we define subspaces as follows
    \begin{align*}
    \fF_k = \begin{cases}
    \spn\{ \ve_1, \ve_2, \dots, \ve_k \}, & k=1,\dots,d, \\[0.1cm]
    \{\vzero_{d}\}, & k=0,
    \end{cases}
    \end{align*}
    where $\{\ve_1,\dots,\ve_{d}\}$ is the standard basis of $\BR^{d}$.

Here, we state some properties of the Function $H$ in the above definition.
\begin{lem}\label{prop:H}
    For the function $H$ defined in Equation (\ref{eq:H}), following properties hold.
\begin{enumerate}
    \item $H$ is $\sqrt{8 + 2 \alpha^2}$-smooth.
    \item The saddle point of function $H$ is
    \begin{align*}
        \begin{cases}
            \vx^* = (q, q^2, \dots, q^d)^{\top}, \\
            \vy^* = \omega \left( q, q^2, \dots, q^{d-1}, \frac{1}{\sqrt{1 - q}} q^d \right)^{\top},
        \end{cases}
    \end{align*}
    where $q = \frac{2 + \alpha^2 - \alpha \sqrt{\alpha^2 + 4}}{2}$.
    \item For $k < d$, if $(\vx, \vy) \in \fF_k \times \fF_k$, then
    $
        (\nabla_{\vx} H(\vx, \vy), \nabla_{\vy} H(\vx, \vy)) \in \fF_{k+1} \times \fF_{k+1}.
    $
    \item For $k \le d/2$ and $(\vx, \vy) \in \fF_k \times \fF_k$, we have
    \begin{align*}
        \frac{\norm{\vx - \vx^*}^2 + \norm{\vy - \vy^*}^2}{\norm{\vx^*}^2 + \norm{\vy^*}^2} \ge \frac{1}{2} q^{2k}.
    \end{align*}
\end{enumerate}
\end{lem}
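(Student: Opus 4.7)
The plan is to prove the four assertions in turn, exploiting (i) the block form of the Hessian of $H$, (ii) the fact that $\mB$ is lower bidiagonal while $\mB^T$ is upper bidiagonal, and (iii) the algebraic identities $1 - q = \alpha\omega$ and $q^2 - (2+\alpha^2) q + 1 = 0$ that follow directly from the definitions of $q$ and $\omega$.

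For Part 1, I would write the Hessian of $H$ as the symmetric block matrix $M$ with diagonal blocks $\alpha I$ and $-\alpha I$ and off-diagonal blocks $\mB$ and $\mB^T$. Squaring yields the block-diagonal $M^2 = \diag(\alpha^2 I + \mB\mB^T,\, \alpha^2 I + \mB^T\mB)$, so $\|M\|^2 = \alpha^2 + \|\mB\|^2$. Since every row and column of $\mB$ has at most two nonzero entries of absolute value at most $1$ (note that $\sqrt{\alpha\omega}\le 1$ because $\alpha\omega=1-q\le 1$), a standard row-sum/column-sum bound gives $\|\mB\|^2 \le \|\mB\|_1\|\mB\|_\infty \le 4$, hence $\|M\|\le \sqrt{\alpha^2+4}\le \sqrt{2\alpha^2+8}$. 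For Part 3, I would use the structural observations $\mB e_j\in\spn\{e_j,e_{j+1}\}$, which yields $\mB\fF_k\subseteq\fF_{k+1}$, and $\mB^T e_j\in\spn\{e_{j-1},e_j\}$, which yields $\mB^T\fF_k\subseteq\fF_k$; combined with $\vc=\omega e_1\in\fF_1$, both $\nabla_\vx H=\alpha\vx+\mB\vy-\vc$ and $\nabla_\vy H=\mB^T\vx-\alpha\vy$ then lie in $\fF_{k+1}$.

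For Part 2, I would take the first-order optimality conditions $\alpha\vx^*+\mB\vy^*=\vc$ and $\mB^T\vx^*=\alpha\vy^*$ and eliminate $\vy^*$ to obtain the tridiagonal system $(\alpha^2 I+\mB\mB^T)\vx^*=\alpha\vc$. A direct computation of $\mB\mB^T$ shows that the interior rows reduce to the second-order recurrence $-x_{i-1}^*+(2+\alpha^2)x_i^*-x_{i+1}^*=0$, whose characteristic polynomial has roots $q$ and $1/q$. Plugging in the ansatz $x_i^*=q^i$ satisfies the recurrence automatically, and the boundary equations at $i=1$ and $i=d$ reduce, via the identity $1-q=\alpha\omega$, to tautologies that pin the solution down uniquely. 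The closed form for $\vy^*$ then follows from $\vy^*=\mB^T\vx^*/\alpha$: for $j<d$ one obtains $q^j(1-q)/\alpha=\omega q^j$, and for $j=d$ one obtains $\sqrt{\alpha\omega}\,q^d/\alpha=\omega q^d/\sqrt{1-q}$.

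For Part 4, the projection inequalities $\|\vx-\vx^*\|^2\ge\sum_{j=k+1}^d (x_j^*)^2$ and its analogue for $\vy$ reduce the claim to two geometric-series comparisons. With $A'=\sum_{j=k+1}^d q^{2j}$ and $A=\sum_{j=1}^d q^{2j}$ one gets $A'/A=q^{2k}(1-q^{2(d-k)})/(1-q^{2d})$; using $k\le d/2$ so that $2(d-k)\ge d$ and hence $1-q^{2(d-k)}\ge(1-q^{2d})/2$ yields $A'/A\ge q^{2k}/2$. For the $\vy$-contribution I would use the identity $1/(1-q)=(1+q)/(1-q^2)$ to fold the exceptional last coordinate into the geometric sum, producing $B=\omega^2(q^2+q^{2d+1})/(1-q^2)$ and $B'=\omega^2(q^{2k+2}+q^{2d+1})/(1-q^2)$, from which $B'/B\ge q^{2k+2}/(2q^2)=q^{2k}/2$. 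The ratio in the claim therefore exceeds $\min\{A'/A,B'/B\}\ge q^{2k}/2$. The main obstacle I anticipate is precisely this last bookkeeping: the inhomogeneous $1/\sqrt{1-q}$ factor at coordinate $d$ breaks naive geometric summation, and the rewriting $1/(1-q)=(1+q)/(1-q^2)$ is the essential trick that makes the sums collapse so the ratio bound comes out cleanly with the constant $1/2$ rather than something weaker.
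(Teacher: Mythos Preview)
Your proposal is correct, and Parts 2 and 3 coincide with the paper's proof essentially verbatim. Parts 1 and 4 are also correct but differ in execution from the paper in ways worth noting.

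For Part 1, the paper bounds the gradient difference directly via $\|\va+\vb\|^2\le 2\|\va\|^2+2\|\vb\|^2$, which produces the constant $8+2\alpha^2$. Your Hessian route is cleaner: the block identity $M^2=\diag(\alpha^2 I+\mB\mB^\top,\,\alpha^2 I+\mB^\top\mB)$ gives $\|M\|^2=\alpha^2+\|\mB\|^2\le\alpha^2+4$, which is strictly sharper than the lemma's stated constant (and then you simply weaken to $\sqrt{8+2\alpha^2}$).

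For Part 4, the paper keeps the $\vx$- and $\vy$-contributions together, writing both numerator and denominator in the common form $(1+\omega^2)\cdot\frac{q^{2(\ell+1)}(1-q^{2(d-\ell)})}{1-q^2}+\frac{\omega^2 q}{1-q}q^{2d}$ (with $\ell=k$ and $\ell=0$ respectively), and then uses the elementary fact that $(aX+b)/(aY+b)\ge X/Y$ when $0<X\le Y$ to reduce to the same ratio $q^{2k}(1-q^{2(d-k)})/(1-q^{2d})$ you obtain for the $\vx$-part alone. Your decomposition into separate ratios $A'/A$ and $B'/B$, followed by the mediant bound $(A'+B')/(A+B)\ge\min\{A'/A,B'/B\}$, is equally valid and arguably more transparent; the $1/(1-q)=(1+q)/(1-q^2)$ trick you flag is exactly how the paper's combined expression arises as well. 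Both routes land on the same final chain $q^{2k}(1-q^{2(d-k)})/(1-q^{2d})\ge q^{2k}/2$ via $2(d-k)\ge d$.
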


\begin{proof}
    1. Note that $\alpha \omega = \frac{2 \alpha}{\sqrt{\alpha^2 + 4} + \alpha} < 1$. Hence $\norm{\mB} \le \Norm{\mB}_1 = 2$. \\
    Then for any $(\vx_1, \vy_1), (\vx_2, \vy_2) \in \BR^d \times \BR^d$, we have
    \begin{align*}
        &\quad \norm{\begin{bmatrix} \nabla_{\vx} H(\vx_1, \vy_1) - \nabla_{\vx} H(\vx_2, \vy_2) \\ \nabla_{\vy} H(\vx_1, \vy_1) - \nabla_{\vy} H(\vx_2, \vy_2) \end{bmatrix}}^2
        = \norm{\begin{bmatrix} \alpha (\vx_1 - \vx_2) + \mB (\vy_1 - \vy_2) \\ \mB^{\top} (\vx_1 - \vx_2) - \alpha (\vy_1 - \vy_2) \end{bmatrix}}^2 \\
        &\le 2\alpha^2 \norm{\vx_1 - \vx_2}^2 + 2 \norm{\mB}^2 \norm{\vy_1 - \vy_2}^2 + 2 \norm{\mB}^2 \norm{\vx_1 - \vx_2}^2 + 2 \alpha^2 \norm{\vy_1 - \vy_2}^2 \\
        &\le (8 + 2 \alpha^2) \left( \norm{\vx_1 - \vx_2}^2 + \norm{\vy_1 - \vy_2}^2 \right),
    \end{align*}
    where the first inequality is according to $\norm{\va + \vb}^2 \le 2 \norm{\va}^2 + 2 \norm{\vb}^2$.
    
    2. Letting the gradient of $H$ equal to zero, we get that
    \begin{align*}
        \nabla_{\vx} H(\vx, \vy) &= \alpha \vx + \mB \vy - \vc = 0, \\
        \nabla_{\vy} H(\vx, \vy) &= \mB^{\top} \vx - \alpha \vy = 0. 
    \end{align*}
    Hence, the saddle point of $H$ satisfies 
    \begin{align}
        \vy^* = \frac{1}{\alpha} \mB^{\top} \vx^*, \\
        \left( \alpha^2 I + \mB \mB^{\top} \right) \vx^* = \alpha \vc. \label{eq:x-star}
    \end{align}
    Equation (\ref{eq:x-star}) are equivalent to
    \begin{align*}
        \begin{bmatrix}
            1 + \alpha^2 & -1 & & &  \\
            -1 & 2 + \alpha^2 & -1 & &  \\
            & \ddots & \ddots & \ddots & \\
            & & -1 & 2 + \alpha^2 & -1 \\
            & & & -1 & 1 + \alpha \omega + \alpha^2
        \end{bmatrix}
        \vx^*
        = \begin{bmatrix}
            \alpha \omega \\
            0 \\
            \vdots \\
            0 \\
            0
        \end{bmatrix}.
    \end{align*}
    
    Note that $q$ is a root of the equation $s^2 - (2 + \alpha^2)s + 1 = 0$, then we have
    \begin{align*}
        (1 + \alpha^2) q - q^2 &= 1 - q = \frac{\alpha \sqrt{\alpha^2 + 4} - \alpha^2}{2} = \alpha \omega, \\
        -q^{d-1} + (1 + \alpha \omega + \alpha^2 )q^d &= q^{d-1} \left( -1 + (2 + \alpha^2) q - q + \alpha \omega q \right) \\
        &= q^{d-1} \left( -1 + (2 + \alpha^2) q - q + (1 - q) q \right) = 0,
    \end{align*}
    and 
    \begin{align*}
        -q^k + (2 + \alpha^2) q^{k+1} -q^{k+2} &= 0, ~~~\text{ for } k = 1, 2, \dots, d-2, 
    \end{align*}
    which implies the solution to Equation (\ref{eq:x-star}) is $\vx^* = (q, q^2, \dots, q^d)^{\top}$.
    Additionally, we have
    \begin{align*}
        \vy^* = \frac{1}{\alpha} \mB^{\top} \vx^* = \omega \left( q, q^2, \dots, q^{d-1}, \frac{1}{\sqrt{1 - q}} q^d \right)^{\top},
    \end{align*}
    where we have used that $\alpha \omega = 1 - q$.
    
    3. For $(\vx, \vy) \in \fF_k$ for $k < d$, note that
    \begin{align*}
        \vc \in \fF_{1} \subseteq \fF_{k+1}, ~~ \mB \vy \in \fF_{k+1}, ~~
        \mB^{\top} \vx \in \fF_{k+1}.
    \end{align*}
    Therefore we have $\nabla_{\vx} H(\vx, \vy), \nabla_{\vy} H(\vx, \vy) \in \fF_{k+1}$.
    
    4. For $(\vx, \vy) \in \fF_k$, we have $x_{k+1} = \dots = x_d = y_{k+1} = \dots = y_d = 0$ and 
    \begin{align*}
        \norm{\vx - \vx^*}^2 + \norm{\vy - \vy^*}^2 
        &\ge \sum_{i = k+1}^d q^{2i} + \omega^2 \sum_{i=k+1}^{d-1} q^{2i} + \frac{\omega^2}{1 - q} q^{2d} \\
        &= (1 + \omega^2) \frac{q^{2(k+1)} (1 - q^{2(d - k)})}{1 - q^2} + \frac{\omega^2 q}{1 - q} q^{2d}.
    \end{align*}
    
    Consequently, there holds
    \begin{align*}
        \frac{\norm{\vx - \vx^*}^2 + \norm{\vy - \vy^*}^2}{\norm{\vx^*}^2 + \norm{\vy^*}^2} 
        &\ge \frac{(1 + \omega^2) \frac{q^{2(k+1)} (1 - q^{2(d - k)})}{1 - q^2} + \frac{\omega^2 q}{1 - q} q^{2d}}{(1 + \omega^2) \frac{q^{2} (1 - q^{2d})}{1 - q^2} + \frac{\omega^2 q}{1 - q} q^{2d}} \\
        &\ge \frac{q^{2k} (1 - q^{2(d-k)})}{1 - q^{2d}} \\
        &\ge \frac{q^{2k} (1 - q^{2(d-k)})}{1 - (2 q^d - 1)} \\
        &= \frac{1}{2} q^{2k} \frac{1 - q^{2(d - k)}}{1 - q^d} \ge \frac{1}{2} q^{2k},
    \end{align*}
    where we have used that $1 + q^{2d} \ge 2 q^d$ and $2(d - k) \ge d$ according to $k \le d/2$.
\end{proof}
    
With these pieces in hand, we now construct our adversary problem as follows:
\begin{align}\label{def:scsc}
    \min_{\vx} \max_{\vy} f(\vx, \vy; \alpha, \lambda, d) \triangleq \frac{1}{n} \sum_{i=1}^n f_i(\vx, \vy; \alpha, \lambda, d) = \frac{1}{n} \sum_{i=1}^n \lambda H(\mU_i \vx, \mU_i \vy),
\end{align}
where $f: \BR^{n d} \times \BR^{n d} \to \BR$, and matrices $\mU_1,\dots,\mU_n \in \BR^{d \times n d}$ consist a partition of the identity matrix of order $n d$ such that $\mI=[\mU_1^{\top}, \cdots, \mU_n^{\top}]$.
The trick of constructing worse objective function for SFO algorithms by matrices $\mU_1^{\top}, \cdots, \mU_n^{\top}$ also can be found in the analysis for minimization problem~\cite{lan2017optimal, zhou2019lower}.

The smoothness, convexity and concavity of $f$ can be characterized as follows.
\begin{lem}\label{lem:lower-smooth}
    The class of functions $\{f_i\}$ defined in equation (\ref{def:scsc}) is $\lambda \sqrt{\frac{8 + 2\alpha^2}{n}}$-average-smooth; and the function $f$ is $(\frac{\lambda \alpha}{n}, \frac{\lambda \alpha}{n})$-convex-concave. 
\end{lem}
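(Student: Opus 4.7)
The whole argument rests on one structural observation about the partition matrices: since $\mI_{nd} = [\mU_1^\top, \ldots, \mU_n^\top]$, we have $\mU_i \mU_i^\top = \mI_d$ for every $i$ and $\sum_{i=1}^n \mU_i^\top \mU_i = \mI_{nd}$. Consequently $\|\mU_i^\top \vu\| = \|\vu\|$ for any $\vu \in \BR^d$, and $\sum_{i=1}^n \|\mU_i \vv\|^2 = \|\vv\|^2$ for any $\vv \in \BR^{nd}$. With these two identities in hand, both claims reduce quickly to the smoothness bound on $H$ from Lemma~\ref{prop:H}.

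For the average-smoothness claim, I would first write the chain rule
\begin{align*}
\nabla_{\vx} f_i(\vx,\vy) = \lambda \mU_i^\top \nabla_{\vx} H(\mU_i \vx, \mU_i \vy), \qquad \nabla_{\vy} f_i(\vx,\vy) = \lambda \mU_i^\top \nabla_{\vy} H(\mU_i \vx, \mU_i \vy),
\end{align*}
then use the isometry $\|\mU_i^\top \vu\| = \|\vu\|$ together with Lemma~\ref{prop:H}(1) to obtain, for any $\vz_j = (\vx_j,\vy_j)$,
\begin{align*}
\|\nabla f_i(\vz_1) - \nabla f_i(\vz_2)\|^2 = \lambda^2 \|\nabla H(\mU_i \vx_1,\mU_i \vy_1) - \nabla H(\mU_i \vx_2,\mU_i \vy_2)\|^2 \leq \lambda^2 (8 + 2\alpha^2) \bigl(\|\mU_i(\vx_1{-}\vx_2)\|^2 + \|\mU_i(\vy_1{-}\vy_2)\|^2\bigr).
\end{align*}
Averaging over $i$ and applying $\sum_i \|\mU_i \vv\|^2 = \|\vv\|^2$ to each of the two terms on the right collapses the sum to $\lambda^2(8+2\alpha^2)(\|\vx_1-\vx_2\|^2 + \|\vy_1-\vy_2\|^2)/n$, giving the advertised constant $L = \lambda \sqrt{(8+2\alpha^2)/n}$.

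For the convex-concave claim, I would expand $f$ directly via the definition of $H$:
\begin{align*}
f(\vx,\vy) = \frac{\lambda}{n} \sum_{i=1}^n \left( \frac{\alpha}{2}\|\mU_i \vx\|^2 + (\mU_i \vx)^\top (\mB \mU_i \vy - \vc) - \frac{\alpha}{2}\|\mU_i \vy\|^2 \right).
\end{align*}
The pure quadratic-in-$\vx$ piece collapses to $\frac{\lambda\alpha}{2n}\|\vx\|^2$ and the pure quadratic-in-$\vy$ piece to $-\frac{\lambda\alpha}{2n}\|\vy\|^2$ by the same partition identity, while the remaining cross term is bilinear in $(\vx,\vy)$ and the linear term depends only on $\vx$; neither affects the curvature in a single variable. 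Hence $f(\cdot,\vy)$ is $\lambda\alpha/n$-strongly convex and $f(\vx,\cdot)$ is $\lambda\alpha/n$-strongly concave, which is exactly the $(\lambda\alpha/n, \lambda\alpha/n)$-convex-concave property. There is no real obstacle here; the only thing to keep straight is that $\mU_i \mU_i^\top = \mI_d$ (so $\mU_i^\top$ is an isometry on $\BR^d$) while $\sum_i \mU_i^\top \mU_i = \mI_{nd}$ (which gives the outer averaging identity), and the factor $1/n$ in the average-smoothness constant comes entirely from the latter.
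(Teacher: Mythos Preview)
Your proposal is correct and follows essentially the same route as the paper: both arguments hinge on the partition identities $\mU_i\mU_i^\top=\mI_d$ and $\sum_i\|\mU_i\vv\|^2=\|\vv\|^2$, use the chain rule to reduce the average-smoothness bound to the $\sqrt{8+2\alpha^2}$-smoothness of $H$ from Lemma~\ref{prop:H}, and expand $f$ explicitly to read off the $\lambda\alpha/n$ strong convexity/concavity from the collapsed quadratic terms. The only differences are cosmetic (you state the two $\mU_i$ identities up front and treat the two claims in the opposite order).
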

\begin{proof}
    Using the fact $\sum_{i=1}^n \norm{\mU_i \vx}^2 = \norm{\vx}^2$, we have
    \begin{align*}
        f(\vx, \vy) = \frac{\lambda \alpha}{n} \norm{\vx}^2 - \frac{\lambda}{n} \vx^{\top} \left( \sum_{i=1}^n \mU_i^{\top} \vc \right) + \frac{\lambda}{n} \vx^{\top} \left( \sum_{i=1}^n \mU_i^{\top} \mB \mU_i \right) \vy - \frac{\lambda \alpha}{n} \norm{\vy}^2.
    \end{align*}
    It is clear that $f$ is $(\frac{\lambda \alpha}{n}, \frac{\lambda \alpha}{n})$-convex-concave. 
    
    Moreover, for any $(\vx_1, \vy_1), (\vx_2, \vy_2) \in \BR^{nd} \times \BR^{nd}$, it holds that
    \begin{align*}
        &\quad \frac{1}{n} \sum_{i=1}^n \left( \norm{\nabla_{\vx} f_i(\vx_1, \vy_1) - \nabla_{\vx} f_i(\vx_2, \vy_2)}^2 + \norm{\nabla_{\vy} f_i(\vx_1, \vy_1) - \nabla_{\vy} f_i(\vx_2, \vy_2)}^2 \right) \\
        &= \frac{\lambda^2}{n} \sum_{i=1}^n \bigg( \norm{ \mU_i^{\top} \left(\nabla_{\vx} H(\mU_i\vx_1, \mU_i \vy_1) - \nabla_{\vx} H(\mU_i\vx_2, \mU_i \vy_2)\right)}^2 \\ 
        &\quad\quad\quad\quad\quad+ \norm{ \mU_i^{\top} \left(\nabla_{\vy} H(\mU_i\vx_1, \mU_i \vy_1) - \nabla_{\vy} H(\mU_i\vx_2, \mU_i \vy_2)\right)}^2\bigg) \\
        &= \frac{\lambda^2}{n} \sum_{i=1}^n \bigg( \norm{ \nabla_{\vx} H(\mU_i\vx_1, \mU_i \vy_1) - \nabla_{\vx} H(\mU_i\vx_2, \mU_i \vy_2)}^2 + \norm{ \nabla_{\vy} H(\mU_i\vx_1, \mU_i \vy_1) - \nabla_{\vy} H(\mU_i\vx_2, \mU_i \vy_2)}^2\bigg) \\
        &\le \frac{\lambda^2 (8 + 2\alpha^2)}{n} \sum_{i=1}^n \bigg( \norm{\mU_i(\vx_1 - \vx_2)}^2 + \norm{\mU_i(\vy_1 - \vy_2)}^2 \bigg) \\
        &= \frac{\lambda^2 (8 + 2\alpha^2)}{n} \left( \norm{\vx_1 - \vx_2}^2 + \norm{\vy_1 - \vy_2}^2 \right),
    \end{align*}
    where the first inequality follows from smoothness of $H$ (cf. Property 1 in Lemma \ref{prop:H}). 
\end{proof}

Each component function $f_i$ has ``zero-chain'' property for stochastic first-order oracle. That is, the information provided by an SFO call at the current point $(\vx, \vy)$ can at most increase the dimension of the linear space which contains $(\vx, \vy)$ by 1. We present the formal statement in Lemma \ref{lem:zero-chain}.

\begin{lem}\label{lem:zero-chain}
    Let $(\vx^{(t)}, \vy^{(t)})$ be the point obtained by an SFO algorithm $\fA$ at time-step $t$ and denote $k^{(t)}_{i} \triangleq |\{s \le t: i_s = i\}|$. Then there holds
    \begin{align}\label{eq:zero-chain}
        \mU_i \vx^{(t)}, \mU_i \vy^{(t)} \in \fF_{k^{(t)}_{i}}, ~~~\text{ for } t \ge 0, i = 1, \dots, n.
    \end{align}
\end{lem}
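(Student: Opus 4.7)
\textbf{Proof plan for Lemma \ref{lem:zero-chain}.}
The plan is to proceed by induction on $t$. For the base case $t = 0$, the WLOG initialization $\vx^{(0)} = \vy^{(0)} = \vzero$ made at the start of Section \ref{appendix:lower} gives $\mU_i \vx^{(0)} = \mU_i \vy^{(0)} = \vzero \in \fF_0$, while $k_i^{(0)} = 0$ because no oracle call has yet been made.

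For the inductive step, fix $t \ge 0$, assume (\ref{eq:zero-chain}) holds at every time $s \le t$, and let $i_{t+1}$ be the index sampled at step $t+1$. The algebraic backbone I will exploit is the block-orthogonality $\mU_j \mU_{j'}^\top$ equal to $\mI_d$ when $j = j'$ and to $\vzero$ otherwise, inherited from $\mI = [\mU_1^\top, \dots, \mU_n^\top]$. Combined with the chain rule
\[
\nabla_\vx f_{i_{t+1}}(\vx, \vy) \;=\; \lambda \, \mU_{i_{t+1}}^\top \nabla_\vx H(\mU_{i_{t+1}} \vx, \mU_{i_{t+1}} \vy)
\]
(and the analogous identity for $\nabla_\vy$), multiplying on the left by $\mU_i$ collapses the stochastic gradient to $\vzero$ when $i \ne i_{t+1}$ and to $\lambda \nabla_\vx H(\mU_i \vx, \mU_i \vy)$ when $i = i_{t+1}$. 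Splitting along these two cases, if $i \ne i_{t+1}$ then $k_i^{(t+1)} = k_i^{(t)}$ and $\mU_i$ of the pre-projection iterate $\tilde\vx^{(t+1)}$ remains in $\spn\{\mU_i \vx^{(s)}\}_{s \le t} \subseteq \fF_{k_i^{(t)}}$ by the inductive hypothesis; if $i = i_{t+1}$ then $k_i^{(t+1)} = k_i^{(t)} + 1$, and since the inductive hypothesis puts every $(\mU_i \vx^{(s)}, \mU_i \vy^{(s)})$ inside $\fF_{k_i^{(t)}} \times \fF_{k_i^{(t)}}$, Property 3 of Lemma \ref{prop:H} yields $\nabla_\vx H(\mU_i \vx^{(s)}, \mU_i \vy^{(s)}) \in \fF_{k_i^{(t)} + 1}$, which together with the inductive hypothesis for the prior iterates places $\mU_i \tilde\vx^{(t+1)}$ in $\fF_{k_i^{(t+1)}}$. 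The same reasoning, run with $\nabla_\vy$, handles $\tilde\vy^{(t+1)}$.

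Because the adversary problem (\ref{def:scsc}) is unconstrained, $\fP_\fX$ and $\fP_\fY$ act as the identity, so $\vx^{(t+1)} = \tilde\vx^{(t+1)}$ and $\vy^{(t+1)} = \tilde\vy^{(t+1)}$, which closes the induction. I anticipate no real obstacle: the zero-chain structure of $H$ encoded in Property 3 of Lemma \ref{prop:H} is precisely what makes the Krylov-style dimension count advance by at most one per sampled component, and the partition identity $\mI = [\mU_1^\top, \dots, \mU_n^\top]$ guarantees that non-sampled blocks receive no new information at all. The only item requiring a touch of care is running the induction on the pair $(\vx^{(t)}, \vy^{(t)})$ jointly, since Property 3 needs containment of both coordinates in $\fF_{k_i^{(t)}}$ simultaneously; this is automatic from the inductive hypothesis, which is stated for $\vx^{(t)}$ and $\vy^{(t)}$ at once.
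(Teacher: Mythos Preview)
Your proposal is correct and follows essentially the same approach as the paper: induction on $t$, block-orthogonality $\mU_j \mU_{j'}^\top = \delta_{jj'}\mI_d$ to kill the non-sampled blocks, and Property~3 of Lemma~\ref{prop:H} to advance the sampled block by one dimension. Your explicit remark that the projection is the identity because the adversary problem is unconstrained, and your care in running the induction jointly on $(\vx^{(t)},\vy^{(t)})$, are points the paper leaves implicit.
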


\begin{proof}
    For $t = 0$, Equation (\ref{eq:zero-chain}) holds apparently by $\vx^{(0)} = \vy^{(0)} = \vzero$. 
    
    Now we suppose Equation (\ref{eq:zero-chain}) holds for $t < T$. It is easily to check that $k_{i}^{(T)} = k_{i}^{(T -1)}$ for $i \neq i_T$ and $k_{i_T}^{(T)} = k_{i_T}^{(T -1)} + 1$.
    
    Observe that for $i \neq i_T$ we have
    \begin{align*}
        \mU_i \nabla_{\vx} f_{i_T}(\vx, \vy) = \lambda \mU_i \mU_{i_T}^{\top} \nabla_{\vx} H(\mU_{i_T} \vx, \mU_{i_T} \vy) = \vzero, 
    \end{align*}
    which implies
    \begin{align*}
        \mU_i \vx^{(T)} &\in \spn\big\{\mU_i \vx^{(0)}, \dots, \mU_i \vx^{(T-1)}, \mU_i \nabla_{\vx} f_{i_T}(\vx^{(0)}, \vy^{(0)}), \dots, \mU_i \nabla_{\vx} f_{i_T}(\vx^{(T-1)}, \vy^{(T-1)})\big\} \\
        &= \spn\big\{\mU_i \vx^{(0)}, \dots, \mU_i \vx^{(T-1)}\big\} \subseteq \fF_{k_i^{(T-1)}} = \fF_{k_i^{(T)}}.
    \end{align*}
    Next, Following from Property 3 in Lemma \ref{prop:H}, we know that for $t \le T-1$ 
    \begin{align*}
        \mU_{i_T} \nabla_{\vx} f_{i_T}(\vx^{(t)}, \vy^{(t)}) &= \lambda \mU_{i_T} \mU_{i_T}^{\top} \nabla_{\vx} H(\mU_{i_T} \vx^{(t)}, \mU_{i_T} \vy^{(t)}) \\
        &= \lambda \nabla_{\vx} H(\mU_{i_T} \vx^{(t)}, \mU_{i_T} \vy^{(t)}) \in \fF_{k_{i_T}^{(T-1)} + 1} = \fF_{k_{i_T}^{(T)}}, 
    \end{align*}
    where we have used the inductive hypothesis that
    \begin{align*}
        \mU_{i_T} \vx^{(t)}, \mU_{i_T} \vy^{(t)} \in \fF_{k_{i_T}^{(t)}} \subseteq \fF_{k_{i_T}^{(T-1)}}.
    \end{align*}
    Therefore, we can conclude that 
    \begin{align*}
        & \mU_{i_T} \vx^{(T)} \\
        \in & \spn\big\{\mU_{i_T} \vx^{(0)}, \dots, \mU_{i_T} \vx^{(T-1)}, \mU_{i_T} \nabla_{\vx} f_{i_T}(\vx^{(0)}, \vy^{(0)}), \dots, \mU_{i_T} \nabla_{\vx} f_{i_T}(\vx^{(T-1)}, \vy^{(T-1)})\big\} \\
        \subseteq & \fF_{k_{i_T}^{(T)}}.
    \end{align*}
    The reason for the of result $\mU_i \vx^{(T)} \in \fF_{k_i^{(T)}}$ is same. 
\end{proof}


Now we can show the lower bound for finding an approximate saddle point of problem~(\ref{def:scsc}) by SFO algorithms when $L/\mu = \Omega(\sqrt{n})$. 

\begin{thm}\label{thm:lower}
    For the parameter $L, \mu, n, \eps$ such that $L/\mu > \sqrt{10 n}$ and $\eps < \frac{1}{2} e^{-5} \approx 0.00337$, we set 
    \begin{align*}
        \alpha = \sqrt{\frac{8n}{L^2/\mu^2 - 2 n}}, ~~~\lambda = \frac{n \mu}{\alpha}, ~~~ d = \floor{\frac{1}{ \alpha} \ln \left(\frac{1}{2\eps}\right)} - 4.
    \end{align*}
    Then the functions $f(\vx, \vy; \alpha, \lambda, d)$ and $f_i(\vx, \vy; \alpha, \lambda, d)$ defined in Problem (\ref{def:scsc}) satisfy $f$ is $(\mu, \mu)$-convex-concave and $\{f_i\}_{i=1}^n$ is $L$-average-smooth. Moreover, when we employ any SFO algorithm $\fA$ to solve the Problem (\ref{def:scsc}), there holds
    \begin{align*}
        \E \left[ \norm{\vx^{(t)} - \vx^*}^2 + \norm{\vy^{(t)} - \vy^*}^2 \right] > \eps ~~~ \text{ for } t \le n d / 2.
    \end{align*}
\end{thm}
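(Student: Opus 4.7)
The argument is a ``zero-chain'' lower bound tailored to the $L$-average-smooth setting. The block-separable construction (\ref{def:scsc}) restricts each SFO call to $f_i$ to advance only the frontier $k_i^{(t)}$ of block $i$, and the saddle of $H$ has support along the entire length-$d$ chain, so unless essentially every block's frontier reaches $d$, at least one block remains far from its saddle.

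\emph{Parameter check.} Substituting $\alpha = \sqrt{8n/(L^2/\mu^2 - 2n)}$ and $\lambda = n\mu/\alpha$ into Lemma \ref{lem:lower-smooth} gives $\lambda\alpha/n = \mu$ and $\lambda\sqrt{(8 + 2\alpha^2)/n} = \mu\sqrt{(L^2/\mu^2 - 2n) + 2n} = L$, so $f$ is $(\mu,\mu)$-convex-concave and $\{f_i\}_{i=1}^n$ is $L$-average-smooth. The assumption $L/\mu > \sqrt{10n}$ ensures $\alpha^2 < 1$, which is used below.

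\emph{Single-block distance bound.} Because $\{\mU_i\}$ is an orthogonal partition of $\mI_{nd}$, the problem decouples block-wise: the unique saddle $(\vx^*, \vy^*)$ of $f$ satisfies $\mU_i \vx^* = \vx_H^*$ and $\mU_i \vy^* = \vy_H^*$ for every $i$, where $(\vx_H^*, \vy_H^*)$ is the saddle of $H$ from Property 2 of Lemma \ref{prop:H}; in particular $\norm{\vx_H^*}^2 + \norm{\vy_H^*}^2 \ge q^2$. Lemma \ref{lem:zero-chain} gives $\mU_i \vx^{(t)}, \mU_i \vy^{(t)} \in \fF_{k_i^{(t)}}$ with $\sum_i k_i^{(t)} = t$ along every sample path. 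If $t \le nd/2$, some index $i^*$ must satisfy $k_{i^*}^{(t)} \le \lfloor d/2 \rfloor$, otherwise $\sum_i k_i^{(t)} \ge n(\lfloor d/2\rfloor + 1) > nd/2$. Applying Property 4 of Lemma \ref{prop:H} to block $i^*$ and using $q < 1$,
\begin{align*}
\norm{\vx^{(t)} - \vx^*}^2 + \norm{\vy^{(t)} - \vy^*}^2
&\ge \norm{\mU_{i^*}\vx^{(t)} - \vx_H^*}^2 + \norm{\mU_{i^*}\vy^{(t)} - \vy_H^*}^2 \\
&\ge \tfrac{1}{2}\, q^{d}\,\bigl(\norm{\vx_H^*}^2 + \norm{\vy_H^*}^2\bigr)
\;\ge\; \tfrac{1}{2}\,q^{d+2}.
\end{align*}
This bound is pathwise and therefore survives taking expectations over any randomness in $\fA$.

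\emph{Closing the numerics and main obstacle.} It remains to verify $\tfrac{1}{2}\,q^{d+2} > \eps$. The crucial analytic fact is $q \ge e^{-\alpha}$, which follows from the identity $1/\sqrt{q} - \sqrt{q} = \alpha$ (a rewriting of $q^2 - (2+\alpha^2) q + 1 = 0$) combined with the elementary inequality $1 - e^{-\alpha} \ge \alpha\,e^{-\alpha/2}$ for $\alpha \ge 0$. The choice $d + 4 \le (1/\alpha)\ln(1/(2\eps))$ then gives $q^{d+4} \ge e^{-\alpha(d+4)} \ge 2\eps$, and dividing by $q^2 < 1$ gives $q^{d+2} > 2\eps$, completing the proof. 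The main subtlety lies precisely in this last step: the ``$-4$'' shift in the definition of $d$ exists to absorb the gap between $q$ and $e^{-\alpha}$ together with the integer floors in $d$ and $\lfloor d/2\rfloor$, upgrading ``$\ge\eps$'' to the strict ``$>\eps$'' demanded by the theorem.
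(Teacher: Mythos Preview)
Your proof is correct and follows the paper's strategy step for step: verify the parameters via Lemma~\ref{lem:lower-smooth}, invoke the zero-chain Lemma~\ref{lem:zero-chain}, pigeonhole to find a block $i^*$ with $k_{i^*}^{(t)}\le d/2$, and apply Property~4 of Lemma~\ref{prop:H}. The only substantive difference is in the closing numerics. The paper bounds $\ln(1/q)<2\alpha$ (via $\ln(1+x)\le x$ and $\sqrt{\alpha^2+4}<\alpha+2$) and writes the final estimate as $\tfrac12\,q^{d/2+2}>\eps$; you instead establish the sharper bound $q\ge e^{-\alpha}$ from the identity $1/\sqrt{q}-\sqrt{q}=\alpha$ and arrive at $\tfrac12\,q^{d+2}>\eps$. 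Your exponent $d+2$ is the one that drops out directly from Property~4 as stated (the ratio bound there is $\tfrac12 q^{2k}$ with $k\le d/2$, giving $q^{d}$ rather than $q^{d/2}$), and your tighter estimate on $q$ is exactly what is needed to close the argument with that exponent---so your version is arguably the more internally consistent of the two.
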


\begin{proof}
    By Lemma \ref{lem:lower-smooth} and definition of $\alpha, \lambda$, it is clear that $f$ is $(\mu, \mu)$-convex-concave and $\{f_i(\vx, \vy; \alpha, \lambda, d)\}_{i=1}^n$ is $L$-average-smooth. 
    
    For $T = n d/2$, let $i = \argmin_{j} \{ k_{j}^{(T)} \}$. 
    It is clear that $k_i^{(T)} \le d/2$. 
    Then by Property 4 in Lemma \ref{prop:H}, for $t \le T$, we have
    \begin{align*}
        \E \left[\norm{\vx^{(t)} - \vx^*}^2 + \norm{\vy^{(t)} - \vy^*}^2\right] 
        &\ge \E \left[\norm{ \mU_i (\vx^{(t)} - \vx^*)}^2 + \norm{ \mU_i (\vy^{(t)} - \vy^*)}^2\right] \\
        &\ge \frac{q^{k_i^{(t)}}}{2} \left( \norm{ \mU_i \vx^*}^2 + \norm{ \mU_i \vy^*}^2 \right) \\
        &\ge \frac{q^{d/2}}{2} q^2 \frac{1 - q^{2d}}{1 - q^2} \\
        &\ge \frac{q^{d/2 + 2}}{2},
    \end{align*}
    where $q = \frac{2 + \alpha^2 - \alpha \sqrt{\alpha^2 + 4}}{2}$, $\omega = \frac{\sqrt{\alpha^2 + 4} - \alpha}{2}$, and $\mU_i \vx^* = (q, q^2, \dots, q^d)^{\top}$ by Property 2 in Lemma \ref{prop:H}. 
    
    Next, note that 
    \begin{align*}
        (2 + d/2) \ln(1/q) &= (2 + d/2) \ln \left( 1 + \frac{\alpha(\alpha + \sqrt{\alpha^2 + 4})}{2} \right) \\
        &\le (2 + d/2) \frac{\alpha(\alpha + \sqrt{\alpha^2 + 4})}{2} \\
        &< (2 + d/2) \alpha (\alpha + 1) \\
        &\le (4 + d) \alpha \\ 
        &\le \ln(1/2\eps)
    \end{align*}
    which means that $\frac{q^{d/2 + 2}}{2} > \eps$. 
    The first inequality is according to $\ln(1 + a) \le a$, the second inequality follows from $\sqrt{a + b} < \sqrt{a} + \sqrt{b}$, the third inequality is due to $L/\mu \ge \sqrt{10 n}$ and $\alpha = \sqrt{\frac{8n}{L^2/\mu^2 - 2n}} \le 1$, and the last inequality is based on the definition of $d$. 
\end{proof}

We remark that the condition $\eps < \frac{1}{2} e^{-5}$ can ensure that $d \ge 1$.

Furthermore, Theorem \ref{thm:lower} implies that for any SFO algorithm $\fA$ and $L, \mu, n, \eps$ such that $L/\mu > \sqrt{10 n}$ and $\eps < 0.003$, there exist a dimension $d = \fO\left( \sqrt{n} L / \mu \log(1/\eps) \right)$ and functions $\{f_i(\vx, \vy)\}_{i=1}^n: \BR^{d} \times \BR^d \to \BR$ which satisfy $\{f_i\}_{i=1}^n$ is $L$-average smooth, $f$ is $(\mu,\mu)$-convex-concave. In order to find an approximate saddle point $(\hat\vx,\hat\vy)$ such that 
\begin{align*}
    \BE\left[\norm{\hat\vx-\vx^*}^2+\norm{\hat\vy-\vy^*}^2\right] \leq \eps,
\end{align*}
algorithm $\fA$ needs at least $\Omega((\sqrt{n}L/\mu)\log(1/\eps))$ steps.

For the case $L / \mu = \fO(\sqrt{n})$, we consider following problem
\begin{align}\label{def:scsc-n}
\begin{split}
\min_{\vx} \max_{\vy} \hat{f} (\vx, \vy) = & \frac{1}{n} \sum_{i=1}^n \hat{f}_i(\vx, \vy) \\
\triangleq & \frac{1}{n}\sum_{i=1}^n \left(\frac{\mu}{2} \norm{\vx}^2 + \frac{\sqrt{n} \hat{L}}{2} (x_i - 1)^2 - \frac{\mu}{2} \norm{\vy}^2 - \frac{\sqrt{n} \hat{L}}{2} (y_i - 1)^2\right), 
\end{split}
\end{align}
where $\hat{f}: \BR^n \times \BR^n \to \BR$, $\hat{L} = \sqrt{\frac{L^2}{2} - \mu^2}$. 

And we provide the lower bound for finding approximate saddle point of Problem (\ref{def:scsc-n}) as follows.
\begin{thm}\label{thm:lower-n}
    For the parameter $L, \mu, n, \eps$ such that $L/\mu > 2$ and $\eps < \frac{1}{8}$, the functions $\hat{f}(\vx, \vy)$ and $\hat{f}_i(\vx, \vy)$ defined in Problem (\ref{def:scsc-n}) satisfy $\hat{f}$ is $(\mu, \mu)$-convex-concave and $\{\hat{f}_i\}_{i=1}^n$ is $L$-average-smooth. Moreover, when we employ any SFO algorithm $\fA$ to solve the Problem (\ref{def:scsc-n}), there holds
    \begin{align*}
        \E \left[ \norm{\vx^{(t)} - \vx^*}^2 + \norm{\vy^{(t)} - \vy^*}^2 \right] > \eps ~~~ \text{ for } t \le n / 2.
    \end{align*}
\end{thm}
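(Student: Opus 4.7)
The plan is to follow the same zero-chain paradigm as in the proof of Theorem~\ref{thm:lower}, but adapted to a coordinate-separable hard instance that is appropriate when the condition number $L/\mu$ is only mildly larger than $2$. First I would verify the three structural properties of $\hat f$. Strong convexity in $\vx$ and strong concavity in $\vy$ with modulus $\mu$ follow immediately from the explicit form
\[
\hat f(\vx,\vy) = \frac{\mu}{2}\norm{\vx}^2 + \frac{\hat L}{2\sqrt n}\sum_{i=1}^n (x_i-1)^2 - \frac{\mu}{2}\norm{\vy}^2 - \frac{\hat L}{2\sqrt n}\sum_{i=1}^n(y_i-1)^2
\]
obtained by collapsing the average. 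For $L$-average-smoothness of $\{\hat f_i\}$, expanding $\nabla_\vx \hat f_i = \mu\vx + \sqrt n \hat L(x_i - 1)\ve_i$ (and the analogous $\nabla_\vy$ formula) and averaging produces
\[
\frac{1}{n}\sum_{i=1}^n \norm{\nabla \hat f_i(\vz_1) - \nabla \hat f_i(\vz_2)}^2 = \left(\mu^2 + \frac{2\mu\hat L}{\sqrt n} + \hat L^2\right)\norm{\vz_1 - \vz_2}^2,
\]
and substituting $\hat L^2 = L^2/2 - \mu^2$ reduces the desired bound $\mu^2 + 2\mu\hat L/\sqrt n + \hat L^2 \le L^2$ to the algebraic inequality $nL^4 - 8\mu^2L^2 + 16\mu^4 \ge 0$, which dominates $(L^2 - 4\mu^2)^2 \ge 0$ for every $n \ge 1$.

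Next I would compute the saddle point explicitly. Setting $\nabla_\vx \hat f = \vzero$ componentwise gives $\mu x_j + (\hat L/\sqrt n)(x_j - 1) = 0$, so $x_j^* = y_j^* = \alpha^* \triangleq \hat L/(\sqrt n \mu + \hat L)$ for every $j$. Using $s \triangleq \hat L/\mu = \sqrt{L^2/(2\mu^2) - 1} > 1$ (which follows from $L/\mu > 2$), a short case split on whether $s \ge \sqrt n$ or $s < \sqrt n$ yields the uniform bound $n(\alpha^*)^2 = ns^2/(\sqrt n + s)^2 \ge 1/4$.

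The key step is a zero-chain property: letting $\fS_t$ denote the set of component indices sampled by the SFO algorithm through step $t$, I claim that $\vx^{(t)}, \vy^{(t)} \in \spn\{\ve_j : j \in \fS_t\}$, which I would prove by induction on $t$. The base case $\vx^{(0)} = \vy^{(0)} = \vzero$ is immediate. For the inductive step, observe that $\nabla_\vx \hat f_{i_t}(\vx, \vy) = \mu\vx + \sqrt n \hat L(x_{i_t} - 1)\ve_{i_t}$ remains in $\spn\{\ve_j : j \in \fS_t\}$ whenever $\vx$ does, because the first summand preserves the support of $\vx$ and the second lies along $\ve_{i_t}$; the $\vy$-side is analogous. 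Hence every vector in the span defining $\tilde\vx^{(t)}$ and $\tilde\vy^{(t)}$ in Definition~\ref{dfn:SFO} lies in $\spn\{\ve_j : j \in \fS_t\}$, and the projection onto the unconstrained domain is the identity. Since $|\fS_t| \le t \le n/2$, for every realization of $i_1,\dots,i_t$ the complement $\{1,\dots,n\}\setminus\fS_t$ contains at least $n/2$ indices, each contributing exactly $(\alpha^*)^2$ to both $\norm{\vx^{(t)}-\vx^*}^2$ and $\norm{\vy^{(t)}-\vy^*}^2$; summing pathwise and taking expectation gives
\[
\E\big[\norm{\vx^{(t)}-\vx^*}^2 + \norm{\vy^{(t)}-\vy^*}^2\big] \ge n(\alpha^*)^2 \ge \tfrac{1}{4} > \tfrac{1}{8} > \eps.
\]

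The main obstacle I anticipate is the average-smoothness verification: the cross term $2\mu\hat L/\sqrt n$ from the interaction between the $\mu\vx$ and $\sqrt n \hat L(x_i-1)\ve_i$ pieces of $\nabla_\vx \hat f_i$ does not vanish under averaging, and it is only because $\hat L^2$ was calibrated to $L^2/2 - \mu^2$ that the resulting inequality rearranges into the perfect square $(L^2 - 4\mu^2)^2 \ge 0$ and the required $L^2$ bound survives for all $n \ge 1$. Once that bookkeeping is handled, everything else reduces to short algebraic checks.
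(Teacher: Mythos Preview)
Your proof is correct and follows essentially the same approach as the paper: verify $(\mu,\mu)$-convex-concavity, compute the saddle point, check $L$-average-smoothness, establish the coordinate zero-chain property, and count unvisited coordinates. The only notable difference is that the paper handles the average-smoothness step more simply via $\norm{\va+\vb}^2 \le 2\norm{\va}^2 + 2\norm{\vb}^2$, which gives $2(\mu^2 + \hat L^2) = L^2$ directly from the calibration $\hat L^2 = L^2/2 - \mu^2$ and avoids the cross-term algebra you flagged as the main obstacle.
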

\begin{proof}
    It is easily to check that
    \begin{align*}
        \hat{f}(\vx, \vy) = \frac{\mu}{2} \norm{\vx}^2 + \frac{\hat{L}}{2 \sqrt{n}} \norm{\vx - \vone}^2 - \frac{\mu}{2} \norm{\vy}^2 - \frac{\hat{L}}{2 \sqrt{n}} \norm{\vy - \vone}^2.
    \end{align*}
    Hence $\hat{f}$ is $(\mu, \mu)$-convex-concave and the saddle point $(\vx^*, \vy^*)$ of $\hat{f}$ satisfies
    \begin{align*}
        \vx^* = \vy^* = \frac{\hat{L}}{\hat{L} + \sqrt{n} \mu} \vone. 
    \end{align*}
    
    Next, observe that 
    \begin{equation}\label{eq:grad-n}
    \begin{aligned}
        \nabla_{\vx} \hat{f}_i(\vx, \vy) =& \mu \vx + \sqrt{n} \hat{L} (\ve_i^{\top} \vx - 1) \ve_i, \\
        \nabla_{\vy} \hat{f}_i(\vx, \vy) =& - \mu \vy - \sqrt{n} \hat{L} (\ve_i^{\top} \vy - 1) \ve_i.
    \end{aligned}
    \end{equation}
    Then we have
    \begin{align*}
        &\quad \frac{1}{n} \sum_{i=1}^n \left(\norm{\nabla_{\vx} \hat{f}_i(\vx_1, \vy_1) - \nabla_{\vx} \hat{f}_i(\vx_1, \vy_1)}^2 + \norm{\nabla_{\vy} \hat{f}_i(\vx_1, \vy_1) - \nabla_{\vy} \hat{f}_i(\vx_1, \vy_1)}^2\right) \\
        &= \frac{1}{n} \sum_{i=1}^n \left(\norm{\mu(\vx_1 - \vx_2) + \sqrt{n}\hat{L} \ve_i \ve_i^{\top} (\vx_1 - \vx_2) }^2 + \norm{\mu(\vy_1 - \vy_2) + \sqrt{n}\hat{L} \ve_i \ve_i^{\top} (\vy_1 - \vy_2) }^2\right) \\
        &\le \frac{1}{n} \sum_{i=1}^n \left(2 \mu^2 \norm{\vx_1 - \vx_2}^2 + 2 n\hat{L}^2 \norm{\ve_i \ve_i^{\top} (\vx_1 - \vx_2) }^2 + 2 \mu^2 \norm{\vy_1 - \vy_2}^2 + 2 n\hat{L}^2 \norm{\ve_i \ve_i^{\top} (\vy_1 - \vy_2) }^2\right) \\
        &= 2(\hat{L}^2 + \mu^2) \left(\norm{\vx_1 - \vx_2}^2 + \norm{\vy_1 - \vy_2} \right) = L^2 \left(\norm{\vx_1 - \vx_2}^2 + \norm{\vy_1 - \vy_2} \right),
    \end{align*}
    where the first inequality is according to $\norm{\va + \vb}^2 \le 2\norm{\va}^2 + 2\norm{\vb}^2$ and the last equation follows from
    \begin{align*}
        \sum_{i=1}^n \norm{\ve_i \ve_i^{\top} \vu} = \sum_{i=1}^n u_i^2 = \norm{\vu}^2.
    \end{align*}
    Hence, $\{\hat{f}_i\}_{i=1}^n$ is $L$-average-smooth. 
    
    Moreover, by Equation (\ref{eq:grad-n}) and definition of SFO algorithm, we know that
    \begin{align*}
        \vx^{(t)}, \vy^{(t)} \in \spn\left\{ \ve_{i_1}, \dots, \ve_{i_t} \right\}.
    \end{align*}
    Consequently, for $t \le n/2$, there holds
    \begin{align*}
        \E & \left[ \norm{\vx^{(t)} - \vx^*}^2 + \norm{\vy^{(t)} - \vy^*}^2 \right]  \\
        \ge & (n - t)\cdot \frac{\hat{L}^2}{(\hat{L} + \sqrt{n} \mu)^2} 
        \ge \frac{n}{2} \cdot \frac{\hat{L}^2}{2 \hat{L}^2 + 2 n \mu^2} \\
        = & \frac{n}{2} \cdot \frac{L^2 /2 - \mu^2}{L^2 - 2 \mu^2 + 2 n \mu^2} = \frac{n}{4} \left(1 - \frac{2n}{L^2/\mu^2 + 2n - 2}\right) \\
        \ge & \frac{n}{4(n+1)} \ge \frac{1}{8} > \eps,
    \end{align*}
    where we have recalled that $\hat{L}^2 = L^2/2 - \mu^2$, $L/\mu > 2$ and $n \ge 1$.
\end{proof}

Combining Theorem \ref{thm:lower} and \ref{thm:lower-n}, we obtain the result of Theorem \ref{thm:balance-lower-bound}.

 \section{The Detailed Proof for Extension Case}\label{appendix:csc}

In this section, we provide the detailed proof for the results in Section~\ref{sec:extensions}, including the convex-strongly-concave case and the convex-concave cases.

\subsection{The Proof of Lemma~\ref{lem:csc}}
\begin{proof}
     Just note that 
     \begin{align*}
         \max_{(\vx, \vy) \in \fX \times \fY} |f (\vx, \vy) - f_{\eps, \vx_0} (\vx, \vy)| = \frac{\eps}{4 D_x^2} \max_{\vx \in \fX} \norm{\vx - \vx_0}^2 \le \frac{\eps}{4}.
     \end{align*}
     Therefore, we can conclude that 
     \begin{align*}
         \max_{\vy \in \fY} f (\hat{\vx}, \vy) - \min_{\vx \in \fX} f (\vx, \hat{\vy}) &= f(\hat{\vx}, \vy^*_f(\hat{\vx})) - f(\vx^*_f(\hat{\vy}), \hat\vy) \\
         &\le \left(f_{\eps, \vx_0}(\hat{\vx}, \vy^*_f(\hat{\vx})) + \frac{\eps}{4}\right) - \left(f_{\eps, \vx_0}(\vx^*_f(\hat{\vy}), \hat\vy) - \frac{\eps}{4}\right) \\
         &\le \frac{\eps}{2} + \max_{\vy \in \fY} f_{\eps, \vx_0} (\hat{\vx}, \vy) - \min_{\vx \in \fX} f_{\eps, \vx_0} (\vx, \hat{\vy}),
     \end{align*}\
     where $\vy^*_f(\hat{\vx}) = \argmax_{\vy \in \fY} f (\hat{\vx}, \vy)$ and $\vx^*_f(\hat{\vy}) = \argmin_{\vx \in \fX} f (\vx, \hat{\vy})$.
\end{proof}

\subsection{The Proof of Corollary~\ref{cor:csc}}
\begin{proof}
    By the definition of $f_{\eps, \vx_0}$, we know that $f_{\eps, \vx_0}$ is $\left( \frac{\eps}{4 D_x^2}, \mu_y \right)$-convex-concave. 
    The smoothness of $f_{\eps, \vx_0}$ can be verified by
    \begin{align*}
        &\quad \frac{1}{n} \sum_{i=1}^n \norm{\nabla f_{\eps, \vx_0, i}(\vx, \vy) - \nabla f_{\eps, \vx_0, i}(\vx', \vy')}^2 \\
        &= \frac{1}{n} \sum_{i=1}^n \norm{\nabla f_{i}(\vx, \vy) - \nabla f_{i}(\vx', \vy') + \frac{\eps}{4 D_x^2}(\vx - \vx')}^2 \\
        &\le \frac{2}{n} \sum_{i=1}^n \norm{\nabla f_{i}(\vx, \vy) - \nabla f_{i}(\vx', \vy')}^2 + \frac{\eps^2}{8 D_x^4} \norm{(\vx - \vx')}^2 \\
        &\le 2 L^2 \left( \norm{\vx - \vx'}^2 + \norm{\vy - \vy'}^2 \right) + \frac{\eps^2}{8 D_x^4} \norm{\vx - \vx'}^2 \\
        &\le 4 L^2 \left( \norm{\vx - \vx'}^2 + \norm{\vy - \vy'}^2 \right),
    \end{align*}
    where the first inequality is according to $(a + b)^2 \le 2a^2 + 2b^2$ and the second inequality follows from the smoothness of $\{f_i\}_{i=1}^n$ and $\eps \le 4 L D_x^2$. 
    Hence, $\{f_{\eps, \vx_0, i}\}_{i=1}^n$ is $2 L$-average smooth. 
    
    Then by Corollary \ref{cor:ASVRE}, the number of SFO calls of AL-SVRE for finding $\eps/2$-saddle point of $f_{\eps, \vx_0}$ corresponds to finding an $\eps$-saddle point of $f$, that is 
    \begin{align*}
        \tilde\fO \left( \sqrt{n\left(\sqrt{n} + \frac{2 L}{\frac{\eps}{4 D_x^2}}\right)\left(\sqrt{n} + \frac{2 L}{\mu_y}\right)} \right) = \tilde\fO\left(\left(n + D_x \sqrt{\frac{n L \kappa_y}{\eps}} + n^{3/4}\sqrt{\kappa_y} + n^{3/4}D_x \sqrt{\frac{L}{\eps}}\right)\right).
    \end{align*}
\end{proof}

\subsection{The Proof of Lemma~\ref{lem:cc}}
\begin{proof}
     Note that 
     \begin{align*}
         \max_{(\vx, \vy) \in \fX \times \fY} |f (\vx, \vy) - f_{\eps, \vx_0, \vy_0} (\vx, \vy)| =  \max_{(\vx, \vy) \in \fX \times \fY} \left\vert \frac{\eps}{8 D_x^2} \norm{\vx - \vx_0}^2 - \frac{\eps}{8 D_y^2} \norm{\vy - \vy_0}^2 \right\vert \le \frac{\eps}{4}.
     \end{align*}
    For the same reason we got the result in the proof of Lemma \ref{lem:csc}, we also have
     \begin{align*}
         \max_{\vy \in \fY} f (\hat{\vx}, \vy) - \min_{\vx \in \fX} f (\vx, \hat{\vy}) &= f(\hat{\vx}, \vy^*_f(\hat{\vx})) - f(\vx^*_f(\hat{\vy}), \hat\vy) \\
         &\le \left(f_{\eps, \vx_0}(\hat{\vx}, \vy^*_f(\hat{\vx})) + \frac{\eps}{4}\right) - \left(f_{\eps, \vx_0}(\vx^*_f(\hat{\vy}), \hat\vy) - \frac{\eps}{4}\right) \\
         &\le \frac{\eps}{2} + \max_{\vy \in \fY} f_{\eps, \vx_0} (\hat{\vx}, \vy) - \min_{\vx \in \fX} f_{\eps, \vx_0} (\vx, \hat{\vy}),
     \end{align*}\
     where $\vy^*_f(\hat{\vx}) = \argmax_{\vy \in \fY} f (\hat{\vx}, \vy)$ and $\vx^*_f(\hat{\vy}) = \argmin_{\vx \in \fX} f (\vx, \hat{\vy})$.
\end{proof}

\subsection{The Proof of Corollary~\ref{cor:cc}}
\begin{proof}
    By the definition of $f_{\eps, \vx_0, \vy_0}$, we know that $f_{\eps, \vx_0, \vy_0}$ is $\left( \frac{\eps}{8 D_x^2}, \frac{\eps}{8 D_y^2} \right)$-convex-concave. 
    The smoothness of $f_{\eps, \vx_0}$ can be verified by
    \begin{align*}
        &\quad \frac{1}{n} \sum_{i=1}^n \norm{\nabla f_{\eps, \vx_0, \vy_0, i}(\vx, \vy) - \nabla f_{\eps, \vx_0, \vy_0, i}(\vx', \vy')}^2 \\
        &= \frac{1}{n} \sum_{i=1}^n \norm{\nabla f_{i}(\vx, \vy) - \nabla f_{i}(\vx', \vy') + \frac{\eps}{8 D_x^2}(\vx - \vx') - \frac{\eps}{8 D_y^2}(\vy - \vy')}^2 \\
        &\le \frac{3}{n} \sum_{i=1}^n \norm{\nabla f_{i}(\vx, \vy) - \nabla f_{i}(\vx', \vy')}^2 + \frac{3 \eps^2}{64 D_x^4} \norm{(\vx - \vx')}^2 + \frac{3 \eps^2}{64 D_y^4} \norm{(\vy - \vy')}^2 \\
        &\le 4 L^2 \left( \norm{\vx - \vx'}^2 + \norm{\vy - \vy'}^2 \right),
    \end{align*}
    where the first inequality is according to $(a + b + c)^2 \le 3a^2 + 3b^2 + 3c^2$ and the second inequality follows from the smoothness of $\{f_i\}_{i=1}^n$ and $\eps \le 4 L \min\{D_x^2, D_y^2\}$. 
    Hence, $\{f_{\eps, \vx_0, i}\}_{i=1}^n$ is $2 L$-average smooth. 
    
    Then by Corollary \ref{cor:ASVRE}, the number of SFO calls of AL-SVRE for finding $\eps/2$-saddle point of $f_{\eps, \vx_0}$ corresponds to finding an $\eps$-saddle point of $f$, is 
    \begin{align*}
        \tilde\fO \left( \sqrt{n\left(\sqrt{n} + \frac{2 L}{\frac{\eps}{8 D_x^2}}\right)\left(\sqrt{n} + \frac{2 L}{\frac{\eps}{8 D_y^2}}\right)} \right) = \tilde\fO\left(\left(n + \frac{\sqrt{n} L D_x D_y}{\eps} + n^{3/4} (D_x + D_y) \sqrt{\frac{L}{\eps}}\right)\right).
    \end{align*}
\end{proof}

\end{document}